%#!latex
\documentclass{amsart}
\usepackage{amssymb}
\usepackage[abbrev]{amsrefs}
\numberwithin{equation}{section}
\def\N{\mathbb N}
\def\R{\mathbb R}
\def\S{\mathbb S}
\providecommand{\norm}[1]{\left\lVert#1\right\rVert}
\newcommand{\ip}[2]{\left\langle #1, #2 \right\rangle}

\providecommand{\abs}[1]{\left\lvert#1\right\rvert}
\DeclareMathOperator*{\Argmin}{argmin}
\DeclareMathOperator{\Fix}{\mathcal{F}}
\DeclareMathOperator{\AC}{\mathcal{A}}
\DeclareMathOperator{\Diam}{diam}
\newcommand{\sk    }[1]{\left( #1 \right)}
\newcommand{\ck    }[1]{\left\{#1 \right\}}
\newcommand{\CAT}{\textup{CAT}}
\newcommand{\opclitvl}[2]{\left]#1, #2\right]}
\newcommand{\clopitvl}[2]{\left[#1, #2\right[}
\newcommand{\openitvl}[2]{\left]#1, #2\right[}
\theoremstyle{plain}
\newtheorem{theorem}{Theorem}[section]
\newtheorem{lemma}[theorem]{Lemma}

\newtheorem{corollary}[theorem]{Corollary}

\theoremstyle{definition}

\theoremstyle{remark}

\allowdisplaybreaks
\title[Two modified proximal point algorithms] 
%\title[Two modified proximal point algorithms in geodesic spaces] 
{Two modified proximal point algorithms in geodesic spaces with curvature bounded above} 
\author[Y.~Kimura]{Yasunori~Kimura}
\address[Y.~Kimura]
{Department of Information Science, 
Toho University, 
Miyama, Funabashi, Chiba 274-8510, Japan}
\email{yasunori@is.sci.toho-u.ac.jp}
\author[F.~Kohsaka]{Fumiaki~Kohsaka}
\address[F.~Kohsaka]
{Department of Mathematical Sciences, Tokai University, 
Kitakaname, Hiratsuka, Kanagawa 259-1292, Japan}
\email{f-kohsaka@tsc.u-tokai.ac.jp}
\subjclass[2010]{Primary: 52A41, 90C25; Secondary: 47H10, 47J05}
\keywords{$\textup{CAT}(1)$ space, convex function, fixed point, 
geodesic space, minimizer, proximal point algorithm, resolvent}
%
%\date{\today; File: \jobname}
%\date{\today}
%
\begin{document}
\begin{abstract}
We obtain existence and convergence theorems 
for two variants of the proximal point algorithm 
involving proper lower semicontinuous convex functions 
in complete geodesic spaces with curvature bounded above. 
\end{abstract}
\maketitle

\section{Introduction}
\label{sec:intro}

The aim of this paper is to study the asymptotic behavior of 
sequences generated by two variants of the proximal point algorithm 
for proper lower semicontinuous convex functions 
in admissible complete $\CAT(1)$ spaces. 
We focus not only on the convergence 
of the sequences to minimizers of functions 
but also on the equivalence between 
their boundedness and the existence of minimizers. 
Applications to convex minimization problems 
in complete $\CAT(\kappa)$ spaces  
with a positive real number $\kappa$ 
are also included.  

The proximal point algorithm 
introduced by Martinet~\cite{MR0298899} 
and studied more generally by Rockafellar~\cite{MR0410483} 
is an iterative method for finding zero points of 
maximal monotone operators in Hilbert spaces. 
Bruck and Reich~\cite{MR470761} also obtained some convergence theorems 
for $m$-accretive operators in Banach spaces.  
It is known that this algorithm has a wide range of applications including 
convex minimization problems, variational inequality problems, 
minimax problems, and equilibrium problems. 

For a proper lower semicontinuous convex function 
$f$ of a Hilbert space $H$ into $\opclitvl{-\infty}{\infty}$, 
the proximal point algorithm 
generates a sequence $\{x_n\}$ by $x_1\in H$ and 
\begin{align}\label{eq:ppa-Hilbert} 
 x_{n+1}=J_{\lambda_n f} x_n 
\quad (n=1,2,\dots), 
\end{align}
where $\{\lambda_n\}$ is a sequence of positive real numbers 
and $J_{\lambda_n f}$ is the resolvent of $\lambda_n f$ defined by 
\begin{align*}
 J_{\lambda_n f} x 
= \Argmin_{y\in H} \left\{f(y)+\frac{1}{2\lambda_n}\norm{y-x}^2\right\} 
\end{align*}
for all $n\in \N$ and $x\in H$. 
See also~\cites{MR2798533, MR2548424} for more details 
on convex analysis in Hilbert spaces. 

The celebrated theorem 
by Rockafellar~\cite{MR0410483}*{Theorem~1} implies 
the following existence and weak convergence theorems 
on the sequence $\{x_n\}$ defined by~\eqref{eq:ppa-Hilbert}. 
If $\inf_{n}\lambda_n >0$, then 
$\{x_n\}$ is bounded 
if and only if the set $\Argmin_H f$ 
of all minimizers of $f$ is nonempty. 
Further, in this case, 
$\{x_n\}$ is weakly convergent to an element of $\Argmin_H f$. 
Br{\'e}zis and Lions~\cite{MR491922}*{Th\'eor\`eme~9} 
also showed that $\{x_n\}$ is weakly 
convergent to an element of $\Argmin_H f$ 
if $\sum_{n=1}^{\infty}\lambda_n =\infty$ 
and $\Argmin_H f$ is nonempty. 
Later, G{\"u}ler~\cite{MR1092735}*{Corollary~5.1} 
and Bauschke, Matou{\v{s}}kov{\'a}, and
Reich~\cite{MR2036787}*{Corollary~7.1} 
found the counterexamples to the strong convergence of $\{x_n\}$. 
By assuming the so-called convergence condition,  
Nevanlinna and Reich~\cite{MR0531600}*{Theorem~2}
obtained a strong convergence theorem for 
$m$-accretive operators in Banach spaces. 
In 2000, Solodov and Svaiter~\cite{MR1734665} 
and Kamimura and Takahashi~\cite{MR1788273} 
proposed two different types of strongly 
convergent proximal-type algorithms in Hilbert spaces. 

On the other hand, 
a $\CAT(\kappa)$ space is a geodesic metric space 
such that every geodesic triangle in it 
satisfies the $\CAT(\kappa)$ inequality, 
where $\kappa$ is a real number. 
A complete $\CAT(0)$ space is particularly 
called an Hadamard space. 
Since the concept of 
$\CAT(\kappa)$ spaces 
includes several fundamental spaces, 
the fixed point theory and the convex optimization theory 
in such spaces have been increasingly 
important. See, for instance,~\cites{MR3241330, MR1744486, MR1072312} 
for more details in this direction.   

In the 1990s, 
Jost~\cite{MR1360608} and
Mayer~\cite{MR1651416} 
generalized the concept of resolvents of convex functions 
to Hadamard spaces. 
According to~\cite{MR3241330}*{Section~2.2},~\cite{MR1360608}*{Lemma~2}, 
and~\cite{MR1651416}*{Section~1.3}, 
if $f$ is a proper lower semicontinuous convex 
function of an Hadamard space $X$ into $\opclitvl{-\infty}{\infty}$, 
then the resolvent $J_{f}$ of $f$ given by 
\begin{align}\label{eq:resolvent-Hadamard}
J_{f} x = \Argmin_{y\in X} \left\{f(y)+\frac{1}{2}d(y, x)^2\right\} 
\end{align}
for all $x\in X$ is a single-valued nonexpansive mapping of $X$ into itself. 
In this case, the set $\Fix(J_f)$ of 
all fixed points of $J_f$ coincides with $\Argmin_X f$. 
See also~\cite{MR3241330} 
on convex analysis in Hadamard spaces. 

In 2013, 
Ba{\v{c}}{\'a}k~\cite{MR3047087} 
generalized the classical theorem by 
Br{\'e}zis and Lions~\cite{MR491922}*{Th\'eor\`eme~9} 
to Hadamard spaces. 
Some related asymptotic results were found by 
Ariza-Ruiz, Leu\c stean, and 
L\'opez-Acedo~\cite{MR3206460}*{Corollary~6.6} 
and Ba{\v{c}}{\'a}k and Reich~\cite{MR3346750}*{Proposition~1.5}. 
\begin{theorem}[\cite{MR3047087}*{Theorem~1.4}]
\label{thm:Bacak} 
 Let $X$ be an Hadamard space, 
 $f$ a proper lower semicontinuous convex function 
 of $X$ into $\opclitvl{-\infty}{\infty}$,  
 $J_{\eta f}$ the resolvent of $\eta f$ for all $\eta >0$, 
 and $\{x_n\}$ a sequence defined by 
 $x_1\in X$ and~\eqref{eq:ppa-Hilbert}, 
 where $\{\lambda_n\}$ is a sequence of positive real numbers  
 such that $\sum_{n=1}^{\infty} \lambda_n=\infty$. 
 If $\Argmin_X f$ is nonempty, 
 then $\{x_n\}$ is $\Delta$-convergent 
 to an element of $\Argmin_X f$. 
\end{theorem}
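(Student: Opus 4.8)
The plan is to combine the metric (Fej\'er) monotonicity of $\{x_n\}$ with an asymptotic minimizing property $f(x_n)\to\inf_X f$, and then to upgrade these to $\Delta$-convergence through the uniqueness of asymptotic centers in Hadamard spaces. First I would fix a point $z\in\Argmin_X f$. Since $\Fix(J_{\eta f})=\Argmin_X f$ for every $\eta>0$, the point $z$ is a common fixed point of all the resolvents, and since each $J_{\lambda_n f}$ is nonexpansive we obtain
\[
d(x_{n+1},z)=d\sk{J_{\lambda_n f}x_n,\,J_{\lambda_n f}z}\le d(x_n,z).
\]
Thus $\ck{d(x_n,z)}$ is nonincreasing, hence convergent, and in particular $\{x_n\}$ is bounded. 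This Fej\'er monotonicity will drive the final step.

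The heart of the argument is the fundamental resolvent inequality: for $u=J_{\lambda f}x$ and every $y\in X$,
\[
2\lambda\sk{f(u)-f(y)}+d(u,x)^2+d(u,y)^2\le d(x,y)^2.
\]
In a Hilbert space this is just the subdifferential inclusion $\tfrac1\lambda(x-u)\in\partial f(u)$ rewritten via polarization; in an Hadamard space there is no inner product, so I would instead derive it from the strong convexity of $y\mapsto\tfrac12 d(y,x)^2$ guaranteed by the $\CAT(0)$ inequality together with the convexity of $f$. Concretely, I would compare the value of $g=f+\tfrac{1}{2\lambda}d(\cdot,x)^2$ at its minimizer $u$ with its values along the geodesic from $u$ to $y$, and let the geodesic parameter tend to $0$ so that the $(1-t)t$ term from the $\CAT(0)$ convexity produces the quadratic contribution $d(u,y)^2$. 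I expect this passage --- replacing the linear inner-product computation by a $\CAT(0)$ comparison argument --- to be the main obstacle.

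Granting the inequality, I would extract two consequences. Taking $x=y=x_n$ and $u=x_{n+1}$ gives $f(x_{n+1})\le f(x_n)-\tfrac{1}{\lambda_n}d(x_{n+1},x_n)^2\le f(x_n)$, so $\ck{f(x_n)}$ is nonincreasing. Taking $x=x_n$, $y=z$, and $u=x_{n+1}$ and summing over $n$ telescopes the distance terms and yields $\sum_{n}2\lambda_n\sk{f(x_{n+1})-\inf_X f}\le d(x_1,z)^2<\infty$. Were $\lim_n f(x_n)$ strictly larger than $\inf_X f$, the monotone lower bound $f(x_{n+1})-\inf_X f\ge\lim_n f(x_n)-\inf_X f>0$ would force $\sum_n\lambda_n<\infty$, contradicting the hypothesis $\sum_n\lambda_n=\infty$; hence $f(x_n)\to\inf_X f$.

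Finally I would pass to $\Delta$-convergence. Let $w$ be the asymptotic center of an arbitrary subsequence $\ck{x_{n_k}}$, so that $x_{n_k}\xrightarrow{\Delta}w$ along that subsequence. Since a proper lower semicontinuous convex function on an Hadamard space is $\Delta$-lower semicontinuous, we get $f(w)\le\liminf_k f(x_{n_k})=\inf_X f$, and therefore $w\in\Argmin_X f$. Thus $\{x_n\}$ is Fej\'er monotone with respect to $\Argmin_X f$ and all of its $\Delta$-cluster points lie in $\Argmin_X f$; by the standard lemma for Hadamard spaces, whose proof rests on the uniqueness of asymptotic centers, these two facts force $\{x_n\}$ to be $\Delta$-convergent to a single element of $\Argmin_X f$, completing the proof.
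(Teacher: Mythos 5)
The paper offers no proof of this statement: Theorem~1.1 is quoted verbatim from Ba\v{c}\'ak's paper \cite{MR3047087}, so there is nothing internal to compare against. Your argument is correct and is essentially the standard proof from that source: the resolvent inequality derived from the strong convexity of $d(\cdot,x)^2$ in a $\CAT(0)$ space, the telescoping estimate that forces $f(x_n)\to\inf_X f$ under $\sum_n\lambda_n=\infty$, and the combination of Fej\'er monotonicity, $\Delta$-lower semicontinuity, and the Opial-type lemma. One small wording slip: a subsequence having asymptotic center $\{w\}$ does not by itself mean that subsequence $\Delta$-converges to $w$; what you actually need (and what your argument does deliver, since bounded sequences in Hadamard spaces have $\Delta$-convergent subsequences) is that every $\Delta$-cluster point of $\{x_n\}$ lies in $\Argmin_X f$, after which the standard lemma applies.
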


Motivated by~\cites{MR2780284, MR3047087, MR1788273}, 
the authors~\cite{MR3574140} 
recently obtained the following existence and convergence theorems 
for two variants  of the proximal point algorithm in Hadamard spaces. 
The algorithms~\eqref{eq:Mann-CAT0} 
and~\eqref{eq:Halpern-CAT0}  
were originally introduced by 
Eckstein and Bertsekas~\cite{MR1168183} 
and Kamimura and Takahashi~\cite{MR1788273} 
for maximal monotone operators 
in Hilbert spaces, respectively. 
\begin{theorem}[\cite{MR3574140}*{Theorem~4.2}]
\label{thm:Mann-CAT0}
 Let $X$, $f$, $\{J_{\eta f}\}_{\eta>0}$ 
 be the same as in Theorem~\ref{thm:Bacak} 
 and $\{x_n\}$ a sequence defined by $x_1\in X$ and 
 \begin{align}\label{eq:Mann-CAT0}
  x_{n+1} = \alpha_n x_n \oplus (1-\alpha_n) J_{\lambda_n f}x_n 
 \quad (n=1,2,\dots), 
 \end{align}
 where $\{\alpha_n\}$ is a sequence in $\clopitvl{0}{1}$ 
 and $\{\lambda_n\}$ is a sequence of positive real numbers  
 such that $\sum_{n=1}^{\infty}(1-\alpha_n) \lambda_n=\infty$.  
 Then the following hold. 
 \begin{enumerate}
  \item[(i)] The set $\Argmin_X f$ is nonempty 
 if and only if 
 $\{J_{\lambda_n f}x_n\}$ is bounded; 
  \item[(ii)] if $\Argmin_X f$ is nonempty and $\sup_n\alpha_n<1$, 
 then both $\{x_n\}$ and $\{J_{\lambda_n f}x_n\}$ 
 are $\Delta$-convergent to an element $x_{\infty}$ of $\Argmin_X f$. 
 \end{enumerate} 
\end{theorem}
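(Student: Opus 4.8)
The plan is to derive everything from a single master inequality obtained by feeding the resolvent's variational inequality into the quadratic convexity inequality of the $\CAT(0)$ metric, and then to run the bounded-sequence machinery of Hadamard spaces ($\Delta$-compactness, $\Delta$-lower semicontinuity of convex lower semicontinuous functions, and uniqueness of asymptotic centers). Writing $y_n=J_{\lambda_n f}x_n$, I would first record that the characterization of $y_n$ as the minimizer of $f(\cdot)+\tfrac1{2\lambda_n}d(\cdot,x_n)^2$ gives, for every $y\in X$,
\[ d(y_n,y)^2-d(x_n,y)^2+d(x_n,y_n)^2\le 2\lambda_n\bigl(f(y)-f(y_n)\bigr). \]
Inserting this into the quadratic $\CAT(0)$ estimate $d(x_{n+1},y)^2\le\alpha_n d(x_n,y)^2+(1-\alpha_n)d(y_n,y)^2-\alpha_n(1-\alpha_n)d(x_n,y_n)^2$ and rearranging yields the master inequality
\[ 2(1-\alpha_n)\lambda_n\bigl(f(y_n)-f(y)\bigr)+(1-\alpha_n^2)\,d(x_n,y_n)^2\le d(x_n,y)^2-d(x_{n+1},y)^2\qquad(y\in X). \]

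For part (i), the forward implication is immediate: if $p\in\Argmin_X f$ then $p$ is a common fixed point of the resolvents, so nonexpansiveness gives $d(y_n,p)\le d(x_n,p)$ and convexity of the metric gives $d(x_{n+1},p)\le d(x_n,p)$; thus $\{x_n\}$, and hence $\{y_n\}$, is bounded. For the converse, assuming $\{y_n\}$ bounded, I would sum the master inequality with the nonnegative term $(1-\alpha_n^2)d(x_n,y_n)^2$ discarded and divide by $\sigma_N=\sum_{n\le N}(1-\alpha_n)\lambda_n\to\infty$; the resulting Ces\`aro average $\sigma_N^{-1}\sum_{n\le N}(1-\alpha_n)\lambda_n f(y_n)$ is at most $f(y)+o(1)$ for every $y$, while being at least $\inf_X f$, which forces $\inf_n f(y_n)=\inf_X f$. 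Extracting a subsequence with $f(y_{n_k})\to\inf_X f$, passing to a $\Delta$-convergent sub-subsequence with limit $z$, and using $\Delta$-lower semicontinuity of $f$, I obtain $f(z)\le\inf_X f$; properness rules out $\inf_X f=-\infty$, so $z\in\Argmin_X f$.

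For part (ii), assume $\Argmin_X f\neq\emptyset$ and $\bar\alpha:=\sup_n\alpha_n<1$. Choosing $y=p\in\Argmin_X f$ makes both left-hand terms of the master inequality nonnegative, so $\{d(x_n,p)\}$ is nonincreasing (Fej\'er monotonicity) and $\sum_n(1-\alpha_n^2)d(x_n,y_n)^2<\infty$; since $1-\alpha_n^2\ge1-\bar\alpha^2>0$ this gives the asymptotic regularity $d(x_n,y_n)\to0$, and the same choice yields $\sum_n(1-\alpha_n)\lambda_n(f(y_n)-\inf_X f)<\infty$, hence $\liminf_n f(y_n)=\inf_X f$. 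The decisive extra ingredient is that $\{f(x_n)\}$ is nonincreasing: $f(y_n)\le f(x_n)$ because $y_n$ minimizes $f(\cdot)+\tfrac1{2\lambda_n}d(\cdot,x_n)^2$, and convexity of $f$ along the defining geodesic then gives $f(x_{n+1})\le\alpha_n f(x_n)+(1-\alpha_n)f(y_n)\le f(x_n)$. Setting $\ell=\lim_n f(x_n)$, the telescoping bound $f(x_n)-f(x_{n+1})\ge(1-\alpha_n)\bigl(f(x_n)-f(y_n)\bigr)$ together with $\bar\alpha<1$ forces $f(x_n)-f(y_n)\to0$, so $f(y_n)\to\ell$; comparing with $\liminf_n f(y_n)=\inf_X f$ gives $\ell=\inf_X f$, that is, $f(x_n)\to\inf_X f$. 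Consequently every $\Delta$-cluster point $z$ of $\{x_n\}$ obeys $f(z)\le\inf_X f$ by $\Delta$-lower semicontinuity, so $z\in\Argmin_X f$; combining this with Fej\'er monotonicity through the uniqueness of asymptotic centers in complete $\CAT(0)$ spaces shows $\{x_n\}$ is $\Delta$-convergent to a single $x_\infty\in\Argmin_X f$, and asymptotic regularity transfers the conclusion to $\{y_n\}$.

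I expect the main obstacle to be identifying the $\Delta$-cluster points as minimizers when $\{\lambda_n\}$ is not bounded away from $0$: in that regime the single-step resolvent estimate is too weak to localize a cluster point near $\Argmin_X f$. The monotonicity of $\{f(x_n)\}$, which upgrades the mere $\liminf_n f(y_n)=\inf_X f$ to the genuine limit $f(x_n)\to\inf_X f$, is precisely what removes this difficulty; the surrounding tools—$\Delta$-compactness of bounded sequences, $\Delta$-lower semicontinuity of convex lower semicontinuous functions, and the asymptotic-center argument converting Fej\'er monotonicity into $\Delta$-convergence—are by now standard in Hadamard-space optimization and generalize the scheme behind Theorem~\ref{thm:Bacak}.
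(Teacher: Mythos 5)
Your proposal is correct. Note first that the paper does not actually prove Theorem~\ref{thm:Mann-CAT0}; it imports it from \cite{MR3574140} and instead proves the $\CAT(1)$ analogue, Theorem~\ref{thm:Mann-CAT1}, so the natural comparison is with that proof. Your argument has the same skeleton: a master inequality obtained by feeding the resolvent's variational inequality into the convexity inequality of the metric, Fej\'er monotonicity for the ``only if'' half of (i), summability of $(1-\alpha_n)\lambda_n(f(y_n)-\inf_X f)$ and of $d(x_n,y_n)^2$ for asymptotic regularity, monotonicity of $\{f(x_n)\}$ to upgrade the $\liminf$ to a limit, and the cluster-point/asymptotic-center machinery for $\Delta$-convergence. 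The one genuinely different ingredient is the existence half of (i): the paper's $\CAT(1)$ proof sums the firm-nonexpansiveness-type inequality \eqref{eq:resolvent-CAT1-firm} against a fixed test point $p$, invokes Theorem~\ref{thm:maximizer} to produce the unique maximizer $p$ of the Ces\`aro-averaged function $g$, and concludes $R_fp=p$; you instead Ces\`aro-average the function values $f(y_n)$ themselves, extract a minimizing subsequence, and use $\Delta$-compactness of bounded sequences together with $\Delta$-lower semicontinuity of $f$. Your route is more elementary and works in the Hadamard setting because bounded sequences there automatically have $\Delta$-convergent subsequences; the paper's $g$-maximizer device is what survives the passage to $\CAT(1)$, where one only has spherical boundedness and the resolvent inequality is the usable quantitative tool. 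One wording quibble: properness of $f$ by itself does not rule out $\inf_X f=-\infty$; what rules it out in your argument is that the $\Delta$-cluster point $z$ satisfies $f(z)\leq\inf_X f$ while $f$ never takes the value $-\infty$. This is exactly what your argument delivers, so it is a matter of phrasing rather than a gap.
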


\begin{theorem}[\cite{MR3574140}*{Theorem~5.1}]
\label{thm:Halpern-CAT0}
 Let $X$, $f$, and $\{J_{\eta f}\}_{\eta>0}$ be the same as in 
 Theorem~\ref{thm:Bacak}, 
 $v$ an element of $X$,  
 and $\{y_n\}$ a sequence defined by $y_1\in X$ and 
 \begin{align}\label{eq:Halpern-CAT0}
  y_{n+1} = \alpha_n v \oplus (1-\alpha_n) J_{\lambda_n f}y_n 
 \quad (n=1,2,\dots), 
 \end{align}
 where $\{\alpha_n\}$ is a sequence in $[0,1]$ and 
 $\{\lambda_n\}$ is a sequence of positive real numbers  
 such that $\lim_{n} \lambda_n=\infty$. 
 Then the following hold. 
 \begin{enumerate}
  \item[(i)] The set $\Argmin_X f$ is nonempty 
 if and only if 
 $\{J_{\lambda_n f}y_n\}$ is bounded; 
  \item[(ii)] if $\Argmin_X f$ is nonempty, $\lim_n \alpha_n = 0$, 
 and $\sum_{n=1}^{\infty}\alpha_n=\infty$, 
 then both $\{y_n\}$ and $\{J_{\lambda_n f}y_n\}$ 
 are convergent to $Pv$, 
 where $P$ denotes the metric projection 
 of $X$ onto $\Argmin_X f$. 
 \end{enumerate} 
\end{theorem}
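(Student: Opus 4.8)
The plan is to run the standard blueprint for Halpern-type iterations, using that each resolvent $J_{\eta f}$ is single-valued nonexpansive with $\Fix(J_{\eta f})=\Argmin_X f$, together with the metric projection onto the closed convex set $\Argmin_X f$ in the Hadamard space $X$. Write $u_n=J_{\lambda_n f}y_n$ throughout. The one inequality I would derive first is the resolvent estimate obtained by testing the minimizer $u_n$ of $y\mapsto f(y)+\tfrac{1}{2\lambda_n}d(y,y_n)^2$ against points on geodesics issuing from $u_n$: for every $y\in X$,
\begin{align*}
 \frac{1}{2\lambda_n}\sk{d(u_n,y)^2-d(y_n,y)^2+d(u_n,y_n)^2}\le f(y)-f(u_n).
\end{align*}
Specializing $y=p\in\Argmin_X f$ and using $f(p)\le f(u_n)$ yields the workhorse $d(u_n,p)^2+d(u_n,y_n)^2\le d(y_n,p)^2$, which drives both parts.

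For part (i), the forward implication is a routine induction: if $p\in\Argmin_X f$, then $p=J_{\lambda_n f}p$, and convexity of the metric together with nonexpansiveness give $d(y_{n+1},p)\le\alpha_n d(v,p)+(1-\alpha_n)d(u_n,p)\le\max\{d(v,p),d(y_n,p)\}$, so $\{y_n\}$, and hence $\{u_n\}$, is bounded. For the converse, assume $\{u_n\}$ is bounded; then $\{y_n\}$ is bounded as well, since $d(y_{n+1},u_n)=\alpha_n d(v,u_n)$. Feeding an arbitrary $y$ into the minimization defining $u_n$ and using $\lambda_n\to\infty$ gives $\limsup_n f(u_n)\le f(y)$ for every $y$, so $\{u_n\}$ is a minimizing sequence. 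A $\Delta$-convergent subsequence whose limit is a minimizer, by the $\Delta$-lower semicontinuity of the proper lsc convex function $f$, shows $\Argmin_X f\neq\emptyset$.

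For part (ii) set $p=Pv$. I would first record, from a standard quasilinearization estimate applied to $y_{n+1}=\alpha_n v\oplus(1-\alpha_n)u_n$ followed by $d(u_n,p)\le d(y_n,p)$, the recursion
\begin{align*}
 d(y_{n+1},p)^2\le(1-\alpha_n)\,d(y_n,p)^2+\alpha_n t_n,
\end{align*}
where $t_n=2\ip{\overrightarrow{pv}}{\overrightarrow{p\,y_{n+1}}}$, up to terms that vanish because $\alpha_n\to0$ and the iterates are bounded. With $s_n=d(y_n,p)^2$ and $\sum_n\alpha_n=\infty$, I would close the argument with the Saejung-type real-sequence lemma: it suffices to show that for every subsequence $\{n_k\}$ with $\liminf_k(s_{n_k+1}-s_{n_k})\ge0$ one has $\limsup_k t_{n_k}\le0$. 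Along such a subsequence, combining the recursion with the sharper bound $d(y_{n+1},p)^2\le(1-\alpha_n)\sk{d(y_n,p)^2-d(u_n,y_n)^2}+\alpha_n d(v,p)^2$ forces $d(u_{n_k},y_{n_k})\to0$; since $\{u_{n_k}\}$ is minimizing, a further $\Delta$-convergent subsequence has limit $z\in\Argmin_X f$, and the projection characterization of $p=Pv$ gives $\ip{\overrightarrow{pv}}{\overrightarrow{pz}}\le0$. Transferring this to $\limsup_k t_{n_k}\le0$ yields $s_n\to0$, i.e.\ $y_n\to Pv$, and then $d(u_n,p)\le d(y_n,p)\to0$ gives the convergence of $\{u_n\}$ to $Pv$.

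The main obstacle is precisely the last transfer in part (ii): carrying the nonpositivity $\ip{\overrightarrow{pv}}{\overrightarrow{pz}}\le0$ at the $\Delta$-limit back to $\limsup_k t_{n_k}\le0$. Since $\Delta$-convergence does not preserve distances, one cannot simply pass the quasilinearization pairing to the limit, and this step requires the CAT(0)-specific semicontinuity behavior of $w\mapsto\ip{\overrightarrow{pv}}{\overrightarrow{pw}}$ under $\Delta$-convergence, equivalently a careful use of the asymptotic-center description of the $\Delta$-limit. Verifying this transfer, and confirming that the remainder terms absorbed into $t_n$ genuinely vanish as $\alpha_n\to0$, is where the real work lies; everything else reduces to bookkeeping with the workhorse inequality and the two displayed recursions.
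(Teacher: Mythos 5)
Your part (i) is sound and, in the Hadamard setting, arguably more direct than the route the paper takes for its $\CAT(1)$ analogue: where Theorem~\ref{thm:Halpern-CAT1} identifies the asymptotic center of $\{J_{\lambda_n f}y_n\}$ as a minimizer via the demiclosedness-type Lemma~\ref{lem:demi}(ii), you exploit $\lambda_n\to\infty$ to make $\{J_{\lambda_n f}y_n\}$ a minimizing sequence and conclude from $\Delta$-lower semicontinuity of $f$; both work. The problem is part (ii), and it sits exactly where you planted your flag. Writing out the quasilinearization, $2\ip{\overrightarrow{pv}}{\overrightarrow{pw}}=d(p,v)^2+d(p,w)^2-d(v,w)^2$, so bounding $\limsup_k t_{n_k}$ requires an upper bound on $d(p,y_{n_k+1})^2$ \emph{and} a lower bound on $d(v,y_{n_k+1})^2$ simultaneously along the subsequence. $\Delta$-convergence supplies only the one-sided estimates coming from $\Delta$-lower semicontinuity of $d(\cdot,v)$ and $d(\cdot,p)$. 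The ``semicontinuity of $w\mapsto\ip{\overrightarrow{pv}}{\overrightarrow{pw}}$ under $\Delta$-convergence'' that you propose to use amounts to $\liminf_k\ip{\overrightarrow{z\,y_{n_k+1}}}{\overrightarrow{zp}}\ge 0$ at the $\Delta$-limit $z$, i.e.\ to $\Delta$-convergence implying convergence in the quasilinearized ($w$-convergence) sense; this implication is known to fail in general Hadamard spaces (already in $\R$-trees, e.g.\ for the endpoints of the legs of an infinite spider). So this step is not a verification to be carried out but a genuine gap, and it is precisely the point on which several published CAT(0) Halpern arguments have stumbled.

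The repair is to choose $t_n$ so that only a one-sided estimate is needed, which is what the paper does for Theorem~\ref{thm:Halpern-CAT1}: Lemma~\ref{lem:KS-ineq-Halpern} yields a recursion whose error term has $\cos d(z_n,v)$ in a denominator, and $\limsup_n t_n\le 0$ reduces to $\cos d(Pv,v)\ge\limsup_n\cos d(z_n,v)$, which is exactly Lemma~\ref{lem:limsup-cos} and uses nothing beyond the $\Delta$-lower semicontinuity of $-\cos d(\cdot,v)$. The Hadamard analogue of this device is the (CN)-type inequality
\begin{align*}
 d\bigl(\alpha v\oplus(1-\alpha)u,p\bigr)^2\le\alpha\,d(v,p)^2+(1-\alpha)\,d(u,p)^2-\alpha(1-\alpha)\,d(v,u)^2,
\end{align*}
which with $u=u_n$ and $d(u_n,p)\le d(y_n,p)$ gives $s_{n+1}\le(1-\alpha_n)s_n+\alpha_n t_n$ for $t_n=d(v,p)^2-(1-\alpha_n)d(v,u_n)^2$; now $\limsup_n t_n\le 0$ needs only $d(Pv,v)\le\liminf_n d(v,u_n)$, which follows from $\omega_{\Delta}\bigl(\{u_n\}\bigr)\subset\Argmin_X f$ (your minimizing-sequence argument, valid for the whole sequence because $\lambda_n\to\infty$) together with $\Delta$-lower semicontinuity of $d(\cdot,v)$ and the definition of $Pv$. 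No quasilinearization appears anywhere. A further simplification: since the cluster-point argument works for the full sequence here, Lemma~\ref{lem:AKTT-seq} suffices and the Saejung-type subsequence lemma (Lemma~\ref{lem:KSY-seq}), which moreover requires $\alpha_n>0$, is only needed for the variant with $\inf_n\lambda_n>0$ (Theorem~\ref{thm:Halpern-CAT0-another}). Your remaining reductions, including boundedness and the passage from $y_n\to Pv$ to $u_n\to Pv$, carry over unchanged.
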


\begin{theorem}[\cite{MR3574140}*{Theorem~5.4}]
\label{thm:Halpern-CAT0-another}
 Let $X$, $f$, and $\{J_{\eta f}\}_{\eta>0}$ be the same as in 
 Theorem~\ref{thm:Bacak}, 
 $v$ an element of $X$,  
 and $\{y_n\}$ a sequence defined by $y_1\in X$ 
 and~\eqref{eq:Halpern-CAT0}, 
 where $\{\alpha_n\}$ is a sequence in $\opclitvl{0}{1}$ 
 and $\{\lambda_n\}$ is a sequence of positive real numbers  
 such that 
 \begin{align*}
  \lim_{n\to \infty} \alpha_n = 0, 
  \quad \sum_{n=1}^{\infty}\alpha_n=\infty, 
  \quad \textrm{and} \quad 
   \inf_n \lambda_n > 0.   
 \end{align*}
 If $\Argmin_X f$ is nonempty, 
 then both $\{y_n\}$ and $\{J_{\lambda_n f}y_n\}$ 
 are convergent to $Pv$, 
 where $P$ denotes the metric projection 
 of $X$ onto $\Argmin_X f$. 
\end{theorem}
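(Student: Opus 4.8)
The plan is to combine the firm nonexpansiveness of the resolvents with a Halpern-type squared-distance recursion and to finish by an elementary real-sequence lemma of Saejung--Yotkaew type, identifying the relevant subsequential limits through the metric projection. Throughout write $u_n=J_{\lambda_n f}y_n$, $F=\Argmin_X f=\Fix(J_{\lambda_n f})$ (nonempty, closed and convex by hypothesis), and $p=Pv$. I would first record the defining inequality of the resolvent: since $u_n$ minimizes $y\mapsto f(y)+\tfrac{1}{2\lambda_n}d(y,y_n)^2$, strong convexity of this functional along geodesics gives, for every $y\in X$,
\[ f(u_n)+\tfrac{1}{2\lambda_n}d(u_n,y_n)^2+\tfrac{1}{2\lambda_n}d(u_n,y)^2\le f(y)+\tfrac{1}{2\lambda_n}d(y_n,y)^2 . \]
Taking $y=p$ and using $f(p)\le f(u_n)$ yields the key estimate $d(u_n,p)^2\le d(y_n,p)^2-d(u_n,y_n)^2$. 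Together with convexity of the metric and $\Fix(J_{\lambda_n f})=F$, the usual induction shows $d(y_n,z)\le\max\{d(v,z),d(y_1,z)\}$ for $z\in F$, so $\{y_n\}$ and $\{u_n\}$ are bounded.

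Next I would set $a_n=d(y_n,p)^2$ and $b_n=2\ip{\overrightarrow{vp}}{\overrightarrow{y_{n+1}\,p}}$. Applying the CAT(0) quasilinearization inequality to $y_{n+1}=\alpha_n v\oplus(1-\alpha_n)u_n$ and inserting the key estimate gives
\[ a_{n+1}\le(1-\alpha_n)^2 a_n-(1-\alpha_n)^2 d(u_n,y_n)^2+\alpha_n b_n\le(1-\alpha_n)a_n+\alpha_n b_n . \]
I would then invoke the real-sequence lemma: if $a_{n+1}\le(1-\alpha_n)a_n+\alpha_n b_n$ with $\sum_n\alpha_n=\infty$, and if $\limsup_k b_{n_k}\le0$ for every subsequence with $\liminf_k(a_{n_k+1}-a_{n_k})\ge0$, then $a_n\to0$. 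Fix such a subsequence; rearranging the first inequality above and using $\alpha_n\to0$ together with the boundedness of $\{a_n\}$ and $\{b_n\}$ forces $d(u_{n_k},y_{n_k})\to0$.

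It remains to verify $\limsup_k b_{n_k}\le0$, and this is where the main work lies. I would pass to a further subsequence, still denoted $\{n_k\}$, that realizes the limit superior and is $\Delta$-convergent to some $w$; since $d(u_{n_k},y_{n_k})\to0$ we get $u_{n_k}\xrightarrow{\Delta}w$, and since $d(y_{n_k+1},u_{n_k})=\alpha_{n_k}d(v,u_{n_k})\to0$ also $y_{n_k+1}\xrightarrow{\Delta}w$. Feeding $y=p$ into the resolvent inequality and using $d(u_{n_k},y_{n_k})\to0$ shows $\limsup_k f(u_{n_k})\le f(p)=\min_X f$, so $\Delta$-lower semicontinuity of $f$ gives $f(w)\le\liminf_k f(u_{n_k})\le\min_X f$, i.e. $w\in F$. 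Finally, the variational characterization of the metric projection applied to the $\Delta$-limit $w\in F$ yields $\limsup_k\ip{\overrightarrow{vp}}{\overrightarrow{y_{n_k+1}\,p}}\le\ip{\overrightarrow{vp}}{\overrightarrow{wp}}\le0$, which is exactly $\limsup_k b_{n_k}\le0$. The lemma then gives $a_n\to0$, that is $y_n\to p$, and $d(u_n,p)^2\le d(y_n,p)^2\to0$ shows $u_n\to p=Pv$ as well.

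The step I expect to be the main obstacle is the passage $\limsup_k\ip{\overrightarrow{vp}}{\overrightarrow{y_{n_k+1}\,p}}\le\ip{\overrightarrow{vp}}{\overrightarrow{wp}}$, because quasilinearization is not $\Delta$-continuous; it must be handled by the CAT(0) projection characterization applied to the $\Delta$-limit $w$, together with the fact that asymptotic centers are inherited by subsequences. Identifying $w$ as a minimizer, which is the CAT(0) substitute for demiclosedness of $I-J_{\lambda f}$ at $0$, is the other delicate point, and it is precisely what the resolvent inequality and the $\Delta$-lower semicontinuity of $f$ are used for. I note that the Saejung--Yotkaew lemma is what lets me avoid establishing global asymptotic regularity $d(y_{n+1},y_n)\to0$, which would otherwise require a resolvent identity relating the distinct parameters $\lambda_{n+1}$ and $\lambda_n$.
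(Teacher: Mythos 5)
Note first that the paper does not actually prove Theorem~\ref{thm:Halpern-CAT0-another}; it quotes it from \cite{MR3574140} and proves instead the $\CAT(1)$ analogue, Theorem~\ref{thm:Halpern-CAT1-another}, whose argument is the natural benchmark. Your skeleton matches that argument closely and is sound in most respects: the boundedness induction, the firm-nonexpansiveness estimate $d(u_n,p)^2\le d(y_n,p)^2-d(u_n,y_n)^2$, the use of the Saejung--Yotkaew lemma precisely to avoid asymptotic regularity, the extraction of $d(u_{n_k},y_{n_k})\to 0$ along the special subsequences, and the identification of $\Delta$-cluster points as minimizers via the resolvent inequality, $\inf_n\lambda_n>0$, and $\Delta$-lower semicontinuity (this is the content of Lemma~\ref{lem:demi}(i) and Lemma~\ref{lem:KK-Delta-lsc}).

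The genuine gap is exactly the step you flag as the main obstacle, and it is not repairable by what you invoke. Writing $2\left\langle\overrightarrow{vp},\overrightarrow{xp}\right\rangle=d(v,p)^2+d(p,x)^2-d(v,x)^2$, this is a difference of two convex, $\Delta$-lower semicontinuous functions of $x$, and $\Delta$-convergence of $y_{n_k+1}$ to $w$ controls the two terms in opposite directions: the asymptotic-center inequality gives $\lim_k d(p,y_{n_k+1})^2\ge\lim_k d(w,y_{n_k+1})^2+d(p,w)^2$, whereas your inequality $\limsup_k\left\langle\overrightarrow{vp},\overrightarrow{y_{n_k+1}p}\right\rangle\le\left\langle\overrightarrow{vp},\overrightarrow{wp}\right\rangle$ needs the reverse bound, which holds in Hilbert spaces by weak convergence but fails in general Hadamard spaces (quasilinearization is not $\Delta$-continuous; this is Kakavandi's counterexample). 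The projection characterization $\left\langle\overrightarrow{vp},\overrightarrow{wp}\right\rangle\le 0$ cannot close this. The standard fix --- and the mechanism of the paper's Lemmas~\ref{lem:KS-ineq-Halpern} and~\ref{lem:limsup-cos} in the $\CAT(1)$ case --- is to use instead the inequality $d\bigl(\alpha v\oplus(1-\alpha)z,p\bigr)^2\le\alpha d(v,p)^2+(1-\alpha)d(z,p)^2-\alpha(1-\alpha)d(v,z)^2$, so that the error term becomes $t_n=d(v,Pv)^2-(1-\alpha_n)d(v,u_n)^2$. Then $\limsup_k t_{n_k}\le 0$ requires only the $\Delta$-lower semicontinuity of the single convex function $d(v,\cdot)^2$ at a cluster point $q\in\Argmin_X f$ together with $d(v,Pv)\le d(v,q)$, exactly as in Lemma~\ref{lem:limsup-cos}. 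With that substitution (and $\beta_n=\alpha_n$, so $\sum_n\alpha_n=\infty$ suffices), the rest of your argument goes through.
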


In 2015, 
Ohta and P{\'a}lfia~\cite{MR3396425}*{Definition~4.1 and Lemma~4.2}
showed that the resolvent $J_{f}$ given by~\eqref{eq:resolvent-Hadamard} 
is still well defined in a complete $\CAT(1)$ space 
such that $\Diam(X)<\pi/2$, where $\Diam(X)$ 
denotes the diameter of $X$. 
Using this result, they~\cite{MR3396425}*{Theorem~5.1} 
obtained a $\Delta$-convergence theorem 
on the proximal point algorithm in such spaces. 
It should be noted that 
the condition that $\Diam(X)<\pi/2$ for a complete $\CAT(1)$ space $X$ 
corresponds to the boundedness condition 
for an Hadamard space. 
In fact, every sequence 
in a complete $\CAT(1)$ space $X$ such that 
$\Diam(X)<\pi/2$ has a $\Delta$-convergent subsequence.  

In 2016, the authors~\cite{MR3463526} introduced 
another type of resolvents of convex functions 
in $\CAT(1)$ spaces. 
For a given proper lower semicontinuous 
convex function $f$ of an 
admissible complete $\CAT(1)$ space $X$ 
into $\opclitvl{-\infty}{\infty}$, 
they~\cite{MR3463526}*{Definition~4.3}  
defined the resolvent $R_{f}$ of $f$ by 
\begin{align}\label{eq:resolvent-CAT1-def2}
 R_{f}x = \Argmin_{y\in X} \bigl\{f(y) + \tan d(y, x) \sin d(y, x)\bigr\} 
\end{align}
for all $x\in X$. 
Following~\cite{MR3638673}, 
we say that a $\CAT(1)$ space $X$ is admissible
\begin{align}\label{eq:admissible}
 d(w,w')< \frac{\pi}{2} 
\end{align}
for all $w,w'\in X$. 

Recently, the authors~\cite{MR3638673} 
obtained the following result 
on the proximal point algorithm in $\CAT(1)$ spaces.  
We note that the $\Delta$-convergence of $\{x_n\}$ 
in Theorem~\ref{thm:KimuraKohsaka-PPA-CAT1} 
was also found independently by 
Esp{\'{\i}}nola and
Nicolae~\cite{EspinolaNicolae-JNCA16}*{Theorem~3.2}. 
\begin{theorem}[\cite{MR3638673}*{Theorems~1.1 and~1.2}]
 \label{thm:KimuraKohsaka-PPA-CAT1}
 Let $X$ be an admissible complete $\CAT(1)$ space, 
 $f$ a proper lower semicontinuous convex function 
 of $X$ into $\opclitvl{-\infty}{\infty}$,  
 $R_{\eta f}$ the resolvent of $\eta f$ for all $\eta >0$, 
 and $\{x_n\}$ a sequence defined by $x_1\in X$ and 
 \begin{align*}
  x_{n+1} = R_{\lambda_n f} x_n 
 \quad (n=1,2,\dots), 
 \end{align*}
 where $\{\lambda_n\}$ is a sequence of positive real numbers  
 such that 
 $\sum_{n=1}^{\infty}\lambda_n =\infty$. 
 Then $\Argmin_X f$ is nonempty if and only if 
 \begin{align*}
  \inf_{y\in X}\limsup_{n\to \infty}d(y, x_n)< \frac{\pi}{2} 
  \quad \textrm{and} \quad 
  \sup_{n} d(x_{n+1}, x_n) < \frac{\pi}{2}.    
 \end{align*} 
 Further, in this case, 
 $\{x_n\}$ is $\Delta$-convergent to an element of $\Argmin_X f$. 
\end{theorem}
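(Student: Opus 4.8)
The plan is to reduce everything to a single fundamental inequality for the resolvent $R_{\eta f}$, playing the role of the firmly nonexpansive inequality for $J_{\eta f}$ in the Hadamard case. Writing $\tan\theta\sin\theta=\sec\theta-\cos\theta$, so that cosines of distances become the natural quantities, I would first recall from \cite{MR3463526} that $R_{\eta f}$ is single-valued with $\Fix(R_{\eta f})=\Argmin_X f$, and establish that $u=R_{\eta f}x$ is characterized by
\begin{align*}
\eta f(u)+\frac{\cos d(x,y)}{\cos d(x,u)}\le \eta f(y)+\cos d(u,y) \qquad (y\in X).
\end{align*}
One derives this from the first-order minimality condition for $y\mapsto\eta f(y)+\sec d(y,x)-\cos d(y,x)$ together with the $\CAT(1)$ comparison estimate for $t\mapsto\cos d(\gamma(t),x)$ along the geodesic $\gamma$ from $u$ to $y$; the choices $y=u$ and $y=x$ recover equality and the defining minimality, respectively. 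Getting this inequality in the correct spherical form is, I expect, the main obstacle, since every later estimate is an algebraic consequence of it.

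For necessity, assume $\Argmin_X f\neq\emptyset$ and fix $z\in\Argmin_X f=\Fix(R_{\lambda_n f})$. Putting $y=z$ in the fundamental inequality and using $f(z)\le f(x_{n+1})$ gives, after cancellation,
\begin{align*}
\cos d(x_n,z)\le \cos d(x_{n+1},z)\,\cos d(x_{n+1},x_n).
\end{align*}
Since $\cos d(x_{n+1},x_n)\le 1$, this yields $d(x_{n+1},z)\le d(x_n,z)$, so $\{d(x_n,z)\}$ is nonincreasing and $\inf_{y}\limsup_n d(y,x_n)\le\lim_n d(x_n,z)\le d(x_1,z)<\pi/2$ by admissibility, which is the first condition. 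Dividing by $\cos d(x_{n+1},z)\le 1$ gives $\cos d(x_{n+1},x_n)\ge\cos d(x_n,z)\ge\cos d(x_1,z)>0$, whence $d(x_{n+1},x_n)\le d(x_1,z)<\pi/2$ uniformly in $n$, which is the second condition.

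For sufficiency, assume both conditions. The resolvent identity at $y=x_n$ shows $\{f(x_n)\}$ is nonincreasing. For arbitrary $y$, the fundamental inequality with $x=x_n$, $u=x_{n+1}$, $\eta=\lambda_n$, together with $\cos d(x_n,y)>0$ and $\cos d(x_n,x_{n+1})\le 1$ (here the uniform bound $\sup_n d(x_{n+1},x_n)<\pi/2$ keeps the spherical factors from degenerating), telescopes to
\begin{align*}
\sum_{n=1}^{N}\lambda_n\bigl(f(x_{n+1})-f(y)\bigr)\le \cos d(x_{N+1},y)-\cos d(x_1,y)\le 1.
\end{align*}
Since $\sum_n\lambda_n=\infty$, this forces $\liminf_n f(x_n)\le f(y)$ for every $y$, so $\lim_n f(x_n)=\inf_X f$ by monotonicity. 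The first condition makes the asymptotic-center machinery available: $\{x_n\}$ has a $\Delta$-convergent subsequence with $\Delta$-limit $w\in X$, and $\Delta$-lower semicontinuity of the lower semicontinuous convex $f$ gives $f(w)\le\liminf_k f(x_{n_k})=\inf_X f$; thus $w\in\Argmin_X f$, and in particular $\inf_X f>-\infty$ since $f$ is proper, so $\Argmin_X f\neq\emptyset$.

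Finally, with $\Argmin_X f\neq\emptyset$ available, the necessity computation applies to show that $\{d(x_n,z)\}$ converges for each $z\in\Argmin_X f$, while $f(x_n)\to\inf_X f$ together with $\Delta$-lower semicontinuity shows that every $\Delta$-cluster point of $\{x_n\}$ lies in $\Argmin_X f$. A $\CAT(1)$ Opial-type lemma—uniqueness of the asymptotic center among cluster points, valid because the asymptotic radius is $<\pi/2$—then forces a single $\Delta$-cluster point $x_\infty\in\Argmin_X f$, so $\{x_n\}$ is $\Delta$-convergent to $x_\infty$. I expect the two genuinely delicate points to be the derivation of the fundamental inequality in its spherical form and the bookkeeping in the telescoping estimate, where the consecutive-distance condition is precisely what prevents the factors $\cos d(x_{n+1},x_n)$ from collapsing to zero.
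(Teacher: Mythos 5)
Your architecture (quasi-nonexpansiveness toward minimizers, a telescoping bound forcing $f(x_n)\to\inf_X f$, then $\Delta$-lower semicontinuity and an Opial-type argument) is sound in outline and close to what the literature does, but the ``fundamental inequality'' on which every step rests is false as stated, and you correctly identified it as the main risk. Take $X=[0,\pi/2-\varepsilon]\subset\R$ and $f(y)=cy$ with $c>0$. If $u=R_{\eta f}x$ is an interior minimizer, the first-order condition for $y\mapsto \eta c y+\tan(x-y)\sin(x-y)$ gives $\eta c=\sin d\,(\sec^2 d+1)$ with $d=d(x,u)$. For $y=u-s$ with $s>0$ small, your inequality demands
\begin{align*}
\eta c\,s \;\le\; \cos s-\frac{\cos(d+s)}{\cos d}\;=\;s\tan d+O(s^2),
\end{align*}
whereas $\eta c=\tan d\sec d+\sin d>\tan d$; so the inequality fails. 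The correct spherical analogue carries the factor $\frac{\pi}{2}\bigl(C^{-2}+1\bigr)$ with $C=\cos d(R_{\lambda f}x,x)$ rather than your $C^{-1}$ (see~\eqref{eq:resolvent-CAT1-sqfirm} and~\eqref{eq:resolvent-CAT1-qfirm} in Lemma~\ref{lem:KK-resolvent-ineq}), and in the form recorded there it is asserted only for $u\in\Argmin_X f$. With that correction your necessity argument and the Fej\'er/Opial part of the convergence argument survive essentially verbatim, since the factor is uniformly bounded precisely because $\sup_n d(x_{n+1},x_n)<\pi/2$, and your telescoping sum then gives $\liminf_n f(x_n)\le f(u)$ for $u\in\Argmin_X f$, which is all that the $\Delta$-convergence step needs.

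The more serious casualty is the existence (sufficiency) direction: there you apply the inequality at an \emph{arbitrary} $y$ in order to force $f(x_n)\to\inf_X f$ before any minimizer is known to exist, and neither your (false) version nor the minimizer-restricted version~\eqref{eq:resolvent-CAT1-sqfirm} licenses that. The paper's own route is different: it uses the two-resolvent inequality~\eqref{eq:resolvent-CAT1-firm} together with Theorem~\ref{thm:maximizer}, producing the unique maximizer $p$ of the averaged function $g(y)=\liminf_n\sigma_n^{-1}\sum_k\beta_k\cos d(y,z_k)$ and showing $g(R_fp)\ge g(p)$, hence $R_fp=p\in\Argmin_X f$ by~\eqref{eq:resolvent-CAT1-fix}; no control of the values $f(x_n)$ is needed. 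To salvage your version you would have to prove a two-point inequality valid for all $y\in X$ (not just minimizers) with the correct constant; that is exactly the delicate first-variation computation you deferred, and as written the proposal has a genuine gap there.
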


Motivated by the papers mentioned above, 
we study the asymptotic behavior of sequences generated by 
$x_1, y_1, v\in X$, 
 \begin{align}\label{eq:Mann-CAT1}
  x_{n+1} = \alpha_n x_n \oplus (1-\alpha_n) R_{\lambda_n f}x_n 
 \quad (n=1,2,\dots), 
 \end{align}
and 
 \begin{align}\label{eq:Halpern-CAT1}
  y_{n+1} = \alpha_n v \oplus (1-\alpha_n) R_{\lambda_n f}y_n 
 \quad (n=1,2,\dots), 
 \end{align}
where $\{\alpha_n\}$ is a sequence in $[0,1]$, 
$\{\lambda_n\}$ is a sequence of positive real numbers, 
$X$ is an admissible complete $\CAT(1)$ space, 
$f$ is a proper lower semicontinuous convex function 
of $X$ into $\opclitvl{-\infty}{\infty}$, 
and $R_{\lambda_n f}$ is the resolvent of $\lambda_n f$ 
for all $n\in \N$ 
given by~\eqref{eq:resolvent-CAT1-def2}. 
These algorithms correspond to~\eqref{eq:Mann-CAT0} 
and~\eqref{eq:Halpern-CAT0}  
in Hadamard spaces, respectively. 

This paper is organized as follows. 
In Section~\ref{sec:pre}, we recall some definitions 
and results needed in this paper. 
In Section~\ref{sec:res}, 
we obtain some fundamental properties of resolvents 
of convex functions in complete $\CAT(1)$ spaces. 
In Sections~\ref{sec:Mann} and~\ref{sec:Halpern}, 
we study the asymptotic behavior of 
sequences generated by~\eqref{eq:Mann-CAT1} 
and~\eqref{eq:Halpern-CAT1}, respectively. 
The three main results in this paper,  
Theorems~\ref{thm:Mann-CAT1},~\ref{thm:Halpern-CAT1}, 
and~\ref{thm:Halpern-CAT1-another} 
in admissible complete $\CAT(1)$ spaces,  
correspond 
to Theorems~\ref{thm:Mann-CAT0},~\ref{thm:Halpern-CAT0}, 
and~\ref{thm:Halpern-CAT0-another} 
in Hadamard spaces, respectively. 
In Section~\ref{sec:cor}, 
we deduce three corollaries of our results  
in complete $\CAT(\kappa)$ spaces 
with a positive real number $\kappa$. 

\section{Preliminaries}
\label{sec:pre}

Throughout this paper, 
we denote by $\N$ the set of all positive integers, 
$\R$ the set of all real numbers, 
$\opclitvl{-\infty}{\infty}$ the set $\R \cup \{\infty\}$, 
$\R^2$ the two dimensional Euclidean space with 
Euclidean metric $\rho_{\R^2}$, 
$\S^2$ the unit sphere of the three dimensional Euclidean space $\R^3$ 
with the spherical metric $\rho_{\S^2}$,  
$H$ a real Hilbert space with inner product $\ip{\,\cdot\,}{\,\cdot\,}$  
and induced norm $\norm{\,\cdot\,}$, 
$X$ a metric space with metric $d$, 
$\Fix(T)$ the set of all fixed points of 
a mapping $T$ of $X$ into itself, 
and $\Argmin_X f$ or $\Argmin_{y\in X}f(y)$ 
the set of all minimizers of 
a function $f$ of $X$ into $\opclitvl{-\infty}{\infty}$. 
In the case where $\Argmin_X f$ is a singleton $\{p\}$, 
we sometimes identify $\Argmin_X f$ with the single point $p$. 

We need the following lemma. 

\begin{lemma}[\cite{MR1911872}*{Lemma~2.5}; 
see also~\cite{MR2338104}*{Lemma~2.3}]\label{lem:AKTT-seq}
 Let $\{s_n\}$ be a sequence of nonnegative real numbers, 
 $\{\alpha_n\}$ a sequence in $[0,1]$ such that 
 $\sum_{n=1}^{\infty}\alpha_n=\infty$, 
 and $\{t_n\}$ a sequence of real numbers 
 such that $\limsup_{n}t_n \leq 0$. 
 Suppose that 
 \begin{align}\label{eq:lem:AKTT}
  s_{n+1} \leq (1-\alpha_n) s_n + \alpha_n t_n
 \end{align}
 for all $n\in \N$. Then $\lim_{n}s_n=0$. 
\end{lemma}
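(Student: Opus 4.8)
The plan is to reduce the affine recursion to a purely multiplicative one by an $\varepsilon$-translation, and then to exploit the divergence of $\sum_n \alpha_n$ through the elementary estimate $1-t \le e^{-t}$. Since the conclusion is $\lim_n s_n = 0$ and the $s_n$ are nonnegative, it suffices to prove that $\limsup_n s_n \le 0$.

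First I would fix an arbitrary $\varepsilon > 0$. Because $\limsup_n t_n \le 0$, there is an index $N$ with $t_n \le \varepsilon$ for every $n \ge N$. For such $n$ the hypothesis~\eqref{eq:lem:AKTT} gives
\begin{align*}
 s_{n+1} \le (1-\alpha_n)s_n + \alpha_n \varepsilon,
\end{align*}
and subtracting $\varepsilon$ from both sides while writing $u_n = s_n - \varepsilon$ turns this into the clean recursion
\begin{align*}
 u_{n+1} \le (1-\alpha_n)u_n \qquad (n \ge N).
\end{align*}

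Next I would track the sign of $u_n$. Since each factor $1-\alpha_n$ lies in $[0,1]$, once $u_m \le 0$ for some $m \ge N$ the inequality forces $u_n \le 0$, hence $s_n \le \varepsilon$, for all $n \ge m$. In the remaining case $u_n > 0$ for every $n \ge N$, iterating the recursion yields
\begin{align*}
 0 < u_{n+1} \le u_N \prod_{k=N}^{n}(1-\alpha_k) \le u_N \exp\left(-\sum_{k=N}^{n}\alpha_k\right),
\end{align*}
where the last step uses $1-t \le e^{-t}$. Because $\sum_{k=1}^{\infty}\alpha_k = \infty$, the exponential tends to $0$, so $u_n \to 0$ in this case as well. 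In either case $\limsup_n u_n \le 0$, that is, $\limsup_n s_n \le \varepsilon$. Letting $\varepsilon \downarrow 0$ gives $\limsup_n s_n \le 0$, and combined with $s_n \ge 0$ this yields $\lim_n s_n = 0$.

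The point requiring care is that the control on $\{t_n\}$ is only a $\limsup$ condition rather than convergence to $0$, which is precisely why the argument must be run for each fixed $\varepsilon$ and then closed by letting $\varepsilon \to 0$; the accompanying bookkeeping is the sign analysis of $u_n = s_n - \varepsilon$, since the multiplicative estimate $\prod_k (1-\alpha_k) \to 0$ is only directly useful when $u_N > 0$, while the case of a nonpositive $u_m$ has to be disposed of separately. Everything else is a routine consequence of the telescoping product and the inequality $1-t \le e^{-t}$.
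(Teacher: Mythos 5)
Your proof is correct; the paper itself gives no proof of this lemma, citing it from Xu (Lemma~2.5 of \cite{MR1911872}), and your argument is essentially the standard one found there: after fixing $\varepsilon>0$ and passing to the tail where $t_n\le\varepsilon$, the translation $u_n=s_n-\varepsilon$ (equivalently, the telescoped bound $s_{n+1}\le\prod_{k=N}^{n}(1-\alpha_k)\,s_N+\bigl(1-\prod_{k=N}^{n}(1-\alpha_k)\bigr)\varepsilon$) together with $1-t\le e^{-t}$ and $\sum_n\alpha_n=\infty$ yields $\limsup_n s_n\le\varepsilon$. All steps, including the sign analysis of $u_n$, check out, so nothing further is needed.
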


Saejung and Yotkaew~\cite{MR2847453} 
found the following variant of Lemma~\ref{lem:AKTT-seq}. 
Later, Kimura and Saejung~\cite{MR3570781} 
filled in a slight gap in the original proof of this result. 
Although it was assumed in~\cites{MR3570781, MR2847453} that 
$\alpha_n<1$ for all $n\in \N$, 
the proof in~\cite{MR3570781}*{Lemma~2.8} 
is also valid in the case below. 

\begin{lemma}[\cite{MR3570781}*{Lemma~2.8} 
 and~\cite{MR2847453}*{Lemma~2.6}]\label{lem:KSY-seq}
 Let $\{s_n\}$ be a sequence of nonnegative real numbers, 
 $\{\alpha_n\}$ a sequence in $\opclitvl{0}{1}$ such that 
 $\sum_{n=1}^{\infty}\alpha_n=\infty$, 
 and $\{t_n\}$ a sequence of real numbers. 
 Suppose that~\eqref{eq:lem:AKTT} holds for all $n\in \N$ 
 and that $\limsup_{i}t_{n_i} \leq 0$ 
 whenever 
 $\{n_i\}$ is an increasing sequence in $\N$ 
 satisfying 
 \begin{align*}
  \limsup_{i\to \infty} \bigl(s_{n_i}-s_{n_i + 1}\bigr) \leq 0. 
 \end{align*}
 Then $\lim_{n}s_n=0$. 
\end{lemma}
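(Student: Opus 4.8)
The plan is to reduce the claim to showing $\limsup_{n\to\infty} s_n \leq 0$, since $s_n\geq 0$ already forces $\liminf_n s_n\geq 0$. I would then split the argument according to whether $\{s_n\}$ is eventually nonincreasing, treating the two cases by quite different means.

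First suppose there is some $N$ with $s_{n+1}\leq s_n$ for all $n\geq N$. Then $\{s_n\}$ converges, and I would apply the hypothesis to the increasing sequence $n_i = N+i$, for which $\limsup_i (s_{n_i}-s_{n_i+1}) = 0$, to obtain $\limsup_n t_n \leq 0$. At this point the recursion~\eqref{eq:lem:AKTT} together with $\sum_{n=1}^{\infty}\alpha_n=\infty$ places us exactly in the setting of Lemma~\ref{lem:AKTT-seq}, so $\lim_n s_n = 0$ and we are done in this case.

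The substantive case is when $\{s_n\}$ is not eventually nonincreasing, so that the set of ascent indices $\{k:s_k\leq s_{k+1}\}$ is infinite. Here I would invoke the subsequence construction of Maing\'e, setting for all large $n$
\[
 \tau(n) = \max\{k\in\N : k\leq n,\ s_k\leq s_{k+1}\}.
\]
The routine properties to record are that $\tau(n)\to\infty$, that $s_{\tau(n)}\leq s_{\tau(n)+1}$, and that $s_n\leq s_{\tau(n)+1}$ (the last because only strict descents occur at indices strictly between $\tau(n)$ and $n$). Substituting $s_{\tau(n)}\leq s_{\tau(n)+1}$ into~\eqref{eq:lem:AKTT} at index $\tau(n)$ and cancelling $\alpha_{\tau(n)}>0$ — this is exactly where the hypothesis $\alpha_n\in\opclitvl{0}{1}$ is used — yields $s_{\tau(n)+1}\leq t_{\tau(n)}$, whence $s_n\leq s_{\tau(n)+1}\leq t_{\tau(n)}$.

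It then remains to control $\limsup_n t_{\tau(n)}$. Since the values of $\tau$ range over the increasing enumeration $\{m_i\}$ of the ascent indices, and $\limsup_i(s_{m_i}-s_{m_i+1})\leq 0$ along this sequence, the hypothesis gives $\limsup_i t_{m_i}\leq 0$ and hence $\limsup_n t_{\tau(n)}\leq 0$. Combined with $s_n\leq t_{\tau(n)}$, this delivers $\limsup_n s_n\leq 0$ and completes the proof. The one step requiring care is the verification of the properties of $\tau$ and the bookkeeping that the hypothesis, phrased for arbitrary increasing sequences, indeed applies to the ascent subsequence; by contrast, the key conceptual point — that $\alpha_n>0$ converts the recursion into the pointwise bound $s_{\tau(n)+1}\leq t_{\tau(n)}$ at ascent indices — is short once the construction is in place.
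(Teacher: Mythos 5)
Your argument is correct and complete. Note that the paper itself offers no proof of this lemma: it is quoted verbatim from the cited references (Kimura--Saejung and Saejung--Yotkaew), with only the remark that their proof extends from $\alpha_n<1$ to $\alpha_n\in\opclitvl{0}{1}$. Your proof is exactly the standard one from those sources: the dichotomy between eventually nonincreasing $\{s_n\}$ (handled by Lemma~\ref{lem:AKTT-seq} after feeding $n_i=N+i$ into the $\limsup$ hypothesis) and the Maing\'e ascent-index construction $\tau(n)$, with the cancellation of $\alpha_{\tau(n)}>0$ in~\eqref{eq:lem:AKTT} giving $s_n\leq s_{\tau(n)+1}\leq t_{\tau(n)}$. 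The two points that genuinely need checking --- that the hypothesis applies to the ascent enumeration $\{m_i\}$ because $s_{m_i}-s_{m_i+1}\leq 0$ termwise, and that $\limsup_n t_{\tau(n)}\leq\limsup_i t_{m_i}$ since $\tau(n)\to\infty$ runs through the ascent indices --- are both present in your write-up, and the step $s_{\tau(n)+1}\leq t_{\tau(n)}$ correctly uses both $\alpha_{\tau(n)}>0$ and $1-\alpha_{\tau(n)}\geq 0$, which is why the extension to $\alpha_n=1$ costs nothing.
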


Let $\kappa$ be a nonnegative real number and 
$D_{\kappa}$ the extended real number defined by 
$D_{\kappa}=\infty$ if $\kappa =0$ 
and $\pi/{\sqrt{\kappa}}$ if $\kappa >0$. 
A metric space $X$ is said to be $D_{\kappa}$-geodesic 
if for each $x,y\in X$ with $d(x,y) < D_{\kappa}$, 
there exists a mapping 
$c$ of $[0,l]$ into $X$ such that 
$c(0)=x$, $c(l)=y$, 
and 
\begin{align*}
 d\bigl(c(t_1), c(t_2)\bigr)=\abs{t_1-t_2}
\end{align*}
for all $t_1,t_2\in [0,l]$, 
where $l=d(x,y)$. 
The mapping $c$ is called a geodesic path from $x$ to $y$. 
The image of $c$ is denoted by $[x,y]_c$ and 
is called a geodesic segment between $x$ and $y$. 
We denote by $\alpha x\oplus_c (1-\alpha) y$ 
the point given by 
\begin{align*}
 \alpha x\oplus_c (1-\alpha) y = c\bigl((1-\alpha) l\bigr)
\end{align*}
for all $\alpha \in [0,1]$. 
A $D_{\kappa}$-geodesic metric space 
is also called a $D_{\kappa}$-geodesic space. 
An $\infty$-geodesic metric space is also 
called a geodesic metric space or a geodesic space. 
A subset $F$ of a $D_{\kappa}$-geodesic space $X$ 
such that $d(w,w')<D_{\kappa}$ for all $w,w'\in F$ 
is said to be convex if $[x,y]_c \subset F$ 
whenever $x,y\in F$ 
and $c$ is a geodesic path from $x$ to $y$. 
Although $[x,y]_c$ and $\alpha x \oplus_c (1-\alpha) y$ 
depend on the choice of a geodesic path $c$ from $x$ to $y$, 
we sometimes denote them 
simply by $[x,y]$ and $\alpha x \oplus (1-\alpha)y$, respectively. 
They are determined uniquely 
if the space $X$ is uniquely $D_{\kappa}$-geodesic, that is, 
for each $x,y\in X$ with $d(x,y) <D_{\kappa}$, 
there exists a unique geodesic path from $x$ to $y$. 

If $H$ is a Hilbert space, then the unit sphere $S_H$ of $H$ 
is a uniquely $\pi$-geodesic complete metric space 
with the spherical metric $\rho_{S_H}$ defined by 
\begin{align*}
 \rho _{S_H} (x, y) = \arccos \ip{x}{y}
\end{align*}
for all $x,y\in S_H$. 
For all distinct $x,y\in S_H$ with $\rho_{S_H}(x,y)<\pi$, 
the unique geodesic path $c$ from $x$ to $y$ 
is given by 
\begin{align*}
 c(t) = (\cos t) x + (\sin t) \cdot \frac{y-\ip{x}{y}x}{\norm{y-\ip{x}{y}x}}
\end{align*}
for all $t\in [0,\rho_{S_H}(x,y)]$. 
The space $(S_H, \rho_{S_H})$ is called a Hilbert sphere. 
See~\cites{MR3706153, MR1744486, MR744194} for more details 
on Hilbert spheres. 

Let $(M_{\kappa}, d_{\kappa})$ be 
the uniquely $D_{\kappa}$-geodesic space given by 
\begin{align*}
 (M_{\kappa}, d_{\kappa}) 
 = 
 \begin{cases}
  \left(\R^2, \rho_{\R^2}\right) & (\kappa=0); \\
  \left(\S^2, \frac{1}{\sqrt{\kappa}}\rho_{\S^2}\right) & (\kappa>0). 
 \end{cases}
\end{align*}
If $\kappa$ is a nonnegative real number, 
$X$ is a $D_{\kappa}$-geodesic space, 
and $x_1,x_2,x_3$ are points of $X$ satisfying 
\begin{align}\label{eq:three-point}
 d(x_1,x_2)+d(x_2,x_3)+d(x_3,x_1)<2D_{\kappa}, 
\end{align}
then there exist $\bar{x}_1, \bar{x}_2, \bar{x}_3\in M_{\kappa}$ 
such that 
\begin{align*}
 d(x_i, x_j)=d_{\kappa}(\bar{x}_i, \bar{x}_j)
\end{align*}
for all $i,j\in \{1,2,3\}$; 
see~\cite{MR1744486}*{Lemma~2.14 in Chapter~I.2}.  
The two sets $\Delta$ and $\bar{\Delta}$ given by 
\begin{align*}
 \Delta = [x_1,x_2] \cup [x_2, x_3] \cup [x_3, x_1] 
 \quad \textrm{and} \quad 
 \bar{\Delta} 
 = [\bar{x}_1,\bar{x}_2] \cup [\bar{x}_2, \bar{x}_3] \cup [\bar{x}_3, \bar{x}_1] 
\end{align*}
are called a geodesic triangle with vertices $x_1,x_2,x_3$ 
in $X$ and a comparison triangle for $\Delta$, 
respectively. 
A point $\bar{p}\in \bar{\Delta}$ is called a comparison point 
for $p\in \Delta$ if 
\begin{align*}
 p \in [x_i, x_j], \quad \bar{p} \in [\bar{x}_i, \bar{x}_j], 
 \quad \textrm{and} \quad 
 d(x_i, p) = d_{\kappa} (\bar{x}_i, \bar{p}) 
\end{align*}
for some distinct $i, j\in \{1,2,3\}$. 
A metric space $X$ is said to be 
a $\CAT(\kappa)$ space if it is $D_{\kappa}$-geodesic and 
the $\CAT(\kappa)$ inequality 
\begin{align*}
 d(p, q) \leq d_{\kappa}(\bar{p}, \bar{q})
\end{align*}
holds whenever $\Delta$ is a geodesic triangle 
with vertices $x_1, x_2, x_3 \in X$ satisfying~\eqref{eq:three-point}, 
$\bar{\Delta}$ is a comparison triangle for $\Delta$, 
and $\bar{p}, \bar{q}\in \bar{\Delta}$ are 
comparison points for $p, q\in \Delta$, respectively. 
In this case, the space $X$ 
is also uniquely $D_{\kappa}$-geodesic. 
Every $\CAT(\kappa)$ space 
is a $\CAT(\kappa')$ space 
for all $\kappa'\in \openitvl{\kappa}{\infty}$. 
A complete $\CAT(0)$ space is particularly called 
an Hadamard space. 
The class of Hadamard spaces 
includes nonempty closed convex subsets of Hilbert spaces, 
open unit balls of Hilbert spaces with hyperbolic metric, 
Hadamard manifolds, and complete $\R$-trees. 
The class of complete $\CAT(1)$ spaces 
includes Hadamard spaces and Hilbert spheres with 
spherical metric. 
We say that a $\CAT(1)$ space $X$ is 
admissible if~\eqref{eq:admissible} holds for all $w,w'\in X$. 
If $\kappa>0$, then 
$(X,d)$ is a complete $\CAT(\kappa)$ space 
such that $d(w,w')<D_{\kappa}/2$ for all $w,w'\in X$ 
if and only if $(X, \sqrt{\kappa} d)$ is 
an admissible complete $\CAT(1)$ space. 
See~\cites{MR3241330, MR1744486, MR1835418} for more details 
on $\CAT(\kappa)$ spaces. 

We know that if $X$ is a $\CAT(1)$ space, 
$x_1,x_2,x_3\in X$ satisfy~\eqref{eq:three-point} 
for $\kappa =1$, 
and $\alpha\in [0,1]$, then 
\begin{align}\label{eq:CAT1-ineq}
 \cos d\bigl(\alpha x_1 \oplus (1-\alpha) x_2, x_3\bigr) 
 \geq \alpha \cos d(x_1, x_3) + (1-\alpha) \cos d(x_2, x_3). 
\end{align}
We also know the following fundamental inequalities. 
\begin{lemma}[\cite{MR2927571}*{Corollary~2.2}]\label{lem:KS-ineq}
 If $X$ is a $\CAT(1)$ space, 
 $x_1,x_2,x_3 \in X$ satisfy~\eqref{eq:three-point} for $\kappa =1$, 
and $\alpha\in [0,1]$, then 
\begin{align}
 \begin{split}\label{eq:KS-ineq}
 &\cos d\bigl(\alpha x_1 \oplus (1-\alpha) x_2, x_3\bigr)\sin d(x_1,x_2) \\
 &\geq \cos d(x_1, x_3) \sin \bigl(\alpha d(x_1,x_2)\bigr) 
 + \cos d(x_2, x_3) \sin \bigl((1-\alpha)d(x_1,x_2)\bigr).   
 \end{split}
\end{align} 
\end{lemma}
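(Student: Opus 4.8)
The plan is to push the whole inequality onto the model space $M_1=(\S^2,\rho_{\S^2})$, where it in fact holds as an \emph{equality}, and then let the $\CAT(1)$ comparison inequality do the rest. Write $l=d(x_1,x_2)$ and $m=\alpha x_1\oplus(1-\alpha)x_2$, so that $d(x_1,m)=(1-\alpha)l$ and $d(m,x_2)=\alpha l$. Since the expression $\alpha x_1\oplus(1-\alpha)x_2$ only makes sense when $l<D_1=\pi$, we have $\sin l\geq 0$, with $\sin l=0$ exactly when $l=0$. I would dispose of the degenerate case $l=0$ first: there both sides of~\eqref{eq:KS-ineq} vanish, so we may assume $0<l<\pi$ and hence $\sin l>0$.

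Next I would form a comparison triangle $\bar\Delta$ on $\S^2$ with vertices $\bar x_1,\bar x_2,\bar x_3$ for the geodesic triangle with vertices $x_1,x_2,x_3$; this exists by~\eqref{eq:three-point} and~\cite{MR1744486}*{Lemma~2.14 in Chapter~I.2}, and satisfies $d_1(\bar x_i,\bar x_j)=d(x_i,x_j)$. Let $\bar m$ be the comparison point for $m$, so that $d_1(\bar x_1,\bar m)=(1-\alpha)l$ and $d_1(\bar m,\bar x_2)=\alpha l$. Regarding $\bar x_1,\bar x_2,\bar x_3$ as unit vectors in $\R^3$ with $\ip{\bar x_i}{\bar x_j}=\cos d(x_i,x_j)$, the key computation is to show that
\[
 \bar m=\frac{\sin(\alpha l)}{\sin l}\,\bar x_1+\frac{\sin\bigl((1-\alpha)l\bigr)}{\sin l}\,\bar x_2,
\]
which one verifies using the addition formula for sine by checking $\ip{\bar m}{\bar x_1}=\cos\bigl((1-\alpha)l\bigr)$ and $\ip{\bar m}{\bar x_2}=\cos(\alpha l)$. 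Taking the inner product of this expression with $\bar x_3$ and multiplying by $\sin l$ then produces the exact identity
\[
 \cos d_1(\bar m,\bar x_3)\,\sin l
 =\cos d(x_1,x_3)\sin(\alpha l)+\cos d(x_2,x_3)\sin\bigl((1-\alpha)l\bigr),
\]
whose right-hand side coincides with that of~\eqref{eq:KS-ineq}.

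Finally I would transfer this to $X$. The $\CAT(1)$ inequality gives $d(m,x_3)\leq d_1(\bar m,\bar x_3)$, and since both quantities lie in $[0,\pi]$ and $\cos$ is nonincreasing there, this yields $\cos d(m,x_3)\geq\cos d_1(\bar m,\bar x_3)$. Multiplying by $\sin l>0$ and combining with the displayed identity gives~\eqref{eq:KS-ineq}. The only genuinely computational step is the spherical identity, and the single point that needs care is the sign of $\sin l$: multiplication by $\sin l$ preserves the inequality precisely because $l<\pi$, which is exactly why the case $l=0$ had to be separated at the outset.
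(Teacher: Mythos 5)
Your argument is correct: the identity
\begin{align*}
 \cos d_1(\bar m,\bar x_3)\,\sin l
 =\cos d(x_1,x_3)\sin(\alpha l)+\cos d(x_2,x_3)\sin\bigl((1-\alpha)l\bigr)
\end{align*}
does hold on $\S^2$ (your expression for $\bar m$ is exactly the point $c((1-\alpha)l)$ on the spherical geodesic, as one checks with the sine addition formula), and the passage back to $X$ via the $\CAT(1)$ inequality and the monotonicity of $\cos$ on $[0,\pi]$ is sound, with the degenerate case $l=0$ and the positivity of $\sin l$ handled correctly. The paper itself gives no proof of this lemma --- it is quoted verbatim from Kimura--Sat\^o \cite{MR2927571}*{Corollary~2.2} --- so there is no internal argument to compare against; your spherical comparison-triangle computation is the standard route to this inequality and is a valid self-contained proof of the cited result.
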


\begin{lemma}[\cite{MR3020188}*{Lemma~3.1}]\label{lem:KS-ineq-Halpern}
 If $X$ is an admissible $\CAT(1)$ space, $x_1,x_2,x_3\in X$, 
 and $\alpha\in [0,1]$, then 
\begin{align}
 \begin{split}\label{eq:KS-ineq-Halpern}
 &\cos d\bigl(\alpha x_1 \oplus (1-\alpha) x_2, x_3\bigr) \\
 &\geq (1-\beta) \cos d(x_2, x_3) 
 + \beta \cdot 
 \frac{\cos d(x_1, x_3)}{\sin d(x_1, x_2) 
\tan \bigl(\frac{\alpha}{2}d(x_1, x_2)\bigr) + \cos d(x_1,x_2)}, 
 \end{split}
\end{align}
where 
\begin{align*}
 \beta
 =
 \begin{cases}
  1 - \dfrac{\sin \bigl((1-\alpha) d(x_1,x_2)\bigr)}{\sin d(x_1,x_2)} 
 & (x_1\neq x_2); \\
  \alpha & (x_1=x_2). 
 \end{cases}
\end{align*}
\end{lemma}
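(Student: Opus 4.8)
The plan is to derive~\eqref{eq:KS-ineq-Halpern} from Lemma~\ref{lem:KS-ineq} by dividing through by $\sin d(x_1,x_2)$ and then recognizing that the resulting coefficient of $\cos d(x_1,x_3)$ already equals the one appearing on the right-hand side of~\eqref{eq:KS-ineq-Halpern}. First I would dispose of the degenerate case $x_1=x_2$: then $\alpha x_1\oplus(1-\alpha)x_2=x_1=x_2$ and $\beta=\alpha$, while the denominator $\sin d(x_1,x_2)\tan(\tfrac{\alpha}{2}d(x_1,x_2))+\cos d(x_1,x_2)$ collapses to $1$, so the right-hand side becomes $(1-\alpha)\cos d(x_2,x_3)+\alpha\cos d(x_1,x_3)=\cos d(x_1,x_3)$, which matches the left-hand side.

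For the main case $x_1\neq x_2$, write $a=d(x_1,x_2)$ and note that admissibility gives $0<a<\pi/2$, hence $\sin a>0$ and $\cos a>0$; moreover $\tfrac{\alpha}{2}a<\pi/4$, so $\tan(\tfrac{\alpha}{2}a)\geq 0$ and the denominator is strictly positive. Admissibility also guarantees that $x_1,x_2,x_3$ satisfy the perimeter condition~\eqref{eq:three-point} for $\kappa=1$, since each side is less than $\pi/2$ and hence the perimeter is less than $3\pi/2<2\pi$, so Lemma~\ref{lem:KS-ineq} applies. Dividing~\eqref{eq:KS-ineq} by $\sin a>0$ yields
\[ \cos d(\alpha x_1\oplus(1-\alpha)x_2,x_3)\geq \frac{\sin(\alpha a)}{\sin a}\cos d(x_1,x_3)+\frac{\sin((1-\alpha)a)}{\sin a}\cos d(x_2,x_3), \]
and since $\beta=1-\sin((1-\alpha)a)/\sin a$, the second coefficient is exactly $1-\beta$.

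It therefore remains to verify the scalar identity $\frac{\sin(\alpha a)}{\sin a}=\frac{\beta}{\sin a\,\tan(\frac{\alpha}{2}a)+\cos a}$, equivalently $\sin(\alpha a)\bigl(\sin a\,\tan(\tfrac{\alpha}{2}a)+\cos a\bigr)=\sin a-\sin((1-\alpha)a)$. Setting $\theta=\tfrac{\alpha a}{2}$, the left-hand side factors as $2\sin\theta\cos\theta\cdot\frac{\cos(a-\theta)}{\cos\theta}=2\sin\theta\cos(a-\theta)$, while the sum-to-product formula turns the right-hand side into $2\cos(a-\theta)\sin\theta$, and the two coincide. Multiplying this identity by $\cos d(x_1,x_3)$ and substituting into the displayed inequality produces~\eqref{eq:KS-ineq-Halpern}. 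Since the final step is an exact identity rather than an estimate, no sign hypothesis on $\cos d(x_1,x_3)$ is required, and the only genuine inequality invoked is Lemma~\ref{lem:KS-ineq}. The sole nontrivial point is the trigonometric identity, whose verification through the double-angle and sum-to-product formulas is the main (though routine) computation.
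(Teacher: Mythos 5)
Your proof is correct. The paper itself gives no proof of this lemma --- it is quoted verbatim from \cite{MR3020188}*{Lemma~3.1} --- so your derivation is a self-contained reconstruction rather than a parallel to anything in the text; it follows the natural route of dividing \eqref{eq:KS-ineq} by $\sin d(x_1,x_2)>0$ and then checking the exact scalar identity $\sin(\alpha a)\bigl(\sin a\tan(\tfrac{\alpha a}{2})+\cos a\bigr)=\sin a-\sin((1-\alpha)a)$, which your substitution $\theta=\tfrac{\alpha a}{2}$ verifies correctly ($2\sin\theta\cos(a-\theta)$ on both sides). All the side conditions are handled properly: admissibility gives $a<\pi/2$ so the perimeter bound \eqref{eq:three-point} for $\kappa=1$ holds and the denominator is positive, the degenerate case $x_1=x_2$ reduces to an equality, and your observation that the final step is an identity (so no sign assumption on $\cos d(x_1,x_3)$ is needed) is exactly right.
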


The concept of $\Delta$-convergence was originally 
introduced by Lim~\cite{MR423139} in metric spaces. 
Later, Kirk and Panyanak~\cite{MR2416076} 
applied it to the study of geodesic spaces. 
Let $X$ be a metric space 
and $\{x_n\}$ a sequence in $X$. 
The asymptotic center $\AC\bigl(\{x_n\}\bigr)$ 
of $\{x_n\}$ is defined by 
\begin{align*}
 \AC\bigl(\{x_n\}\bigr) 
 =\ck{z\in X: \limsup_{n\to \infty} d(z, x_n) 
 =\inf_{y\in X} \limsup_{n\to \infty} d(y, x_n)}. 
\end{align*}
The sequence $\{x_n\}$ is said to be 
$\Delta$-convergent to $p\in X$ if 
\begin{align*}
 \AC\bigl(\{x_{n_i}\}\bigr)=\{p\}
\end{align*}
holds for each subsequence $\{x_{n_i}\}$ of $\{x_n\}$.   
In this case, $\{x_n\}$ is bounded and its each subsequence is 
also $\Delta$-convergent to $p$. 
If $X$ is a nonempty closed convex subset of a Hilbert space, 
then the $\Delta$-convergence coincides with the weak convergence. 
We denote by $\omega_{\Delta}\bigl(\{x_n\}\bigr)$ 
the set of all points $q\in X$ such that 
there exists a subsequence of $\{x_n\}$ 
which is $\Delta$-convergent to $q$. 
Following~\cite{MR3638673}, 
we say that a sequence $\{x_n\}$ in a $\CAT(1)$ space $X$ 
is spherically bounded if 
\begin{align*}
\inf_{y\in X}\limsup_{n\to \infty}d(y, x_n)< \frac{\pi}{2}.  
\end{align*}
We know the following lemmas. 
\begin{lemma}[\cite{MR2508878}*{Proposition~4.1 and Corollary~4.4}]
 \label{lem:EFL-subseq}
 Let $X$ be a complete $\CAT(1)$ space 
 and $\{x_n\}$ a spherically bounded sequence in $X$. 
 Then $\AC\bigl(\{x_n\}\bigr)$ is a singleton 
 and $\{x_n\}$ has a $\Delta$-convergent subsequence. 
\end{lemma}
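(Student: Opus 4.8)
The plan is to treat the two assertions separately, both resting on the quantitative convexity estimate furnished by Lemma~\ref{lem:KS-ineq}. Throughout, write $r(y)=\limsup_{n}d(y,x_n)$ for the radius functional and $r=\inf_{y\in X}r(y)$ for the asymptotic radius; since $\lvert d(y,x_n)-d(y',x_n)\rvert\le d(y,y')$, the functional $r$ is $1$-Lipschitz, hence continuous, and spherical boundedness is precisely the statement $r<\pi/2$. Recall that $\AC(\{x_n\})$ is exactly the set of minimizers of $r$.

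First I would establish that $\AC(\{x_n\})$ is a singleton. The heart of the matter is the following estimate: for $u,v\in X$ with $r(u),r(v)<\pi/2$, set $m=\tfrac12 u\oplus\tfrac12 v$. For all sufficiently large $n$ we have $d(u,x_n)<\pi/2$ and $d(v,x_n)<\pi/2$, so $d(u,v)<\pi$ (whence the midpoint $m$ exists and is unique) and the perimeter hypothesis~\eqref{eq:three-point} for $\kappa=1$ holds, since the three distances sum to less than $2\pi=2D_1$. Applying~\eqref{eq:KS-ineq} with $\alpha=1/2$ and $x_3=x_n$, and using $\sin\theta=2\sin(\theta/2)\cos(\theta/2)$, gives
\[
 2\cos\!\bigl(\tfrac12 d(u,v)\bigr)\cos d(m,x_n)\ge \cos d(u,x_n)+\cos d(v,x_n).
\]
Taking $\liminf_{n}$ of both sides, noting that $\cos$ is continuous and decreasing on $[0,\pi]$ so that $\liminf_{n}\cos d(\,\cdot\,,x_n)=\cos r(\,\cdot\,)$, and using $r(m)\ge r$ together with $\cos r>0$, I obtain
\[
 \cos\!\bigl(\tfrac12 d(u,v)\bigr)\ge \frac{\cos r(u)+\cos r(v)}{2\cos r}.
\]
Applied to a minimizing sequence $\{z_k\}$ with $r(z_k)\to r$, the right-hand side tends to $1$, forcing $d(z_j,z_k)\to 0$; thus $\{z_k\}$ is Cauchy, and its limit $z$ satisfies $r(z)=r$ by continuity, so $\AC(\{x_n\})\neq\emptyset$. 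Applied to any two points $u,v$ with $r(u)=r(v)=r$, the same estimate gives $\cos(\tfrac12 d(u,v))\ge 1$, whence $u=v$; therefore $\AC(\{x_n\})$ is a singleton.

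For the existence of a $\Delta$-convergent subsequence I would first observe that every subsequence of $\{x_n\}$ is again spherically bounded, since passing to a subsequence cannot increase $\limsup_{n}d(y,x_n)$; hence, by the first part, every subsequence has a one-point asymptotic center. Next I would extract a \emph{regular} subsequence $\{x_{n_k}\}$, i.e.\ one whose asymptotic radius agrees with that of each of its own subsequences. This is a standard diagonal argument on real numbers: setting $R=\inf\{\,\text{asymptotic radius of }w : w\text{ a subsequence of }\{x_n\}\,\}$, one builds nested subsequences whose radii decrease to $R$ and diagonalizes, the point being that every subsequence of the diagonal sequence is itself a subsequence of $\{x_n\}$, hence has asymptotic radius $\ge R$, while the construction forces its radius $\le R$.

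Finally I would show that such a regular subsequence $\{x_{n_k}\}$ is $\Delta$-convergent to its asymptotic center $\{z\}$. Let $\{x_{n_{k_j}}\}$ be any subsequence, with asymptotic radius $r'$ and singleton center $\{z'\}$. By regularity $r'$ equals the asymptotic radius of $\{x_{n_k}\}$, while $\limsup_{j}d(z,x_{n_{k_j}})\le\limsup_{k}d(z,x_{n_k})=r'$ shows that $z$ is itself an asymptotic center of $\{x_{n_{k_j}}\}$; uniqueness then gives $z'=z$. Hence every subsequence of $\{x_{n_k}\}$ has asymptotic center $\{z\}$, which is exactly $\Delta$-convergence to $z$. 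The main obstacle is the first part: one must verify that spherical boundedness ($r<\pi/2$) is precisely what guarantees, for large $n$, both the existence of the midpoint $m$ and the validity of the perimeter hypothesis~\eqref{eq:three-point} needed to invoke Lemma~\ref{lem:KS-ineq}, and that the monotonicity of cosine correctly transports the pointwise inequality to one for the radius functional.
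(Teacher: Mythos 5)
The paper does not prove this lemma; it is imported verbatim from Espínola and Fernández-León, so there is no internal proof to compare against. Your argument is correct and is essentially the standard one behind the cited result: the midpoint estimate $2\cos\bigl(\tfrac12 d(u,v)\bigr)\cos d(m,x_n)\ge\cos d(u,x_n)+\cos d(v,x_n)$ extracted from \eqref{eq:KS-ineq} with $\alpha=1/2$ is exactly the uniform-convexity input that makes minimizing sequences of the radius functional Cauchy and forces uniqueness of the minimizer (your verification that spherical boundedness supplies the perimeter condition and keeps all relevant distances in the range where cosine is monotone is the right bookkeeping), and the passage from unique asymptotic centers of all subsequences to a $\Delta$-convergent subsequence via a regular subsequence is the classical Goebel--Lim argument. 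The one imprecision is in your description of the diagonal extraction: nested subsequences with asymptotic radii decreasing to the \emph{global} infimum $R$ need not exist, since a first subsequence realizing a radius close to $R$ may contain no further subsequence doing so. The correct construction iterates the infimum, taking $R_k$ to be the infimum over subsequences of the $k$-th stage and producing a diagonal sequence whose common radius is $\lim_k R_k\ge R$. Since your final step uses only the regularity of the diagonal subsequence and not the value of its radius, this does not affect the conclusion.
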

 
\begin{lemma}[\cite{MR3213144}*{Proposition~3.1}]\label{lem:KSY-conv}
 Let $X$ be a complete $\CAT(1)$ space 
 and $\{x_n\}$ a spherically bounded sequence in $X$. 
 If $\{d(z, x_n)\}$ is convergent for each 
 element $z$ of $\omega_{\Delta}\bigl(\{x_n\}\bigr)$, 
 then $\{x_n\}$ is $\Delta$-convergent to an element of $X$.  
\end{lemma}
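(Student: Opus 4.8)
The plan is to argue purely through asymptotic centers, in the spirit of Opial's lemma, so that the $\CAT(1)$ geometry enters only through Lemma~\ref{lem:EFL-subseq}. First I note that every subsequence of $\{x_n\}$ is again spherically bounded, since $\inf_{y}\limsup_{k} d(y,x_{n_k}) \le \inf_{y}\limsup_{n} d(y,x_n) < \pi/2$; hence by Lemma~\ref{lem:EFL-subseq} each subsequence has a singleton asymptotic center and admits a $\Delta$-convergent further subsequence, and in particular $\omega_{\Delta}(\{x_n\})$ is nonempty. The argument will exploit the hypothesis in the form that $\{d(z,x_n)\}$ converges for each $z \in \omega_{\Delta}(\{x_n\})$, so that $\limsup_n d(z,x_n)$ is an honest limit and is unchanged upon restriction to any subsequence. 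The proof then splits into showing first that $\omega_{\Delta}(\{x_n\})$ is a singleton, and second that this single point is the asymptotic center of every subsequence, which is exactly the definition of $\Delta$-convergence.

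For the first step I would take $p, q \in \omega_{\Delta}(\{x_n\})$ together with subsequences $\{x_{n_i}\}$ and $\{x_{m_j}\}$ that are $\Delta$-convergent to $p$ and $q$ respectively. By hypothesis $a_p = \lim_n d(p,x_n)$ and $a_q = \lim_n d(q,x_n)$ exist, and $\limsup_i d(p,x_{n_i}) = a_p$, $\limsup_i d(q,x_{n_i}) = a_q$ since these are subsequences of convergent sequences. Because $\AC(\{x_{n_i}\}) = \{p\}$, the point $p$ minimizes $\limsup_i d(\cdot,x_{n_i})$, whence $a_p \le a_q$; the symmetric argument on $\{x_{m_j}\}$ gives $a_q \le a_p$, so $a_p = a_q$. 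But then $\limsup_i d(q,x_{n_i}) = a_q = a_p = \inf_z \limsup_i d(z,x_{n_i})$, so $q$ also realizes the asymptotic radius of $\{x_{n_i}\}$ and therefore $q \in \AC(\{x_{n_i}\}) = \{p\}$, i.e. $p = q$. Hence $\omega_{\Delta}(\{x_n\}) = \{x_{\infty}\}$ for a single point $x_{\infty}$; set $a = \lim_n d(x_{\infty},x_n)$.

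For the second step let $\{x_{n_k}\}$ be an arbitrary subsequence with $\AC(\{x_{n_k}\}) = \{w\}$, and choose a further subsequence $\{x_{n_{k_j}}\}$ that is $\Delta$-convergent to some $z$; then $z \in \omega_{\Delta}(\{x_n\}) = \{x_{\infty}\}$, so $\AC(\{x_{n_{k_j}}\}) = \{x_{\infty}\}$. Using that $d(x_{\infty},x_n) \to a$ (so the corresponding $\limsup$ over any subsequence equals $a$), that $x_{\infty}$ minimizes $\limsup$ along $\{x_{n_{k_j}}\}$, that passing to a further subsequence cannot increase a $\limsup$, and that $w$ minimizes $\limsup$ along $\{x_{n_k}\}$, I obtain
\begin{align*}
 a = \limsup_{j} d(x_{\infty}, x_{n_{k_j}})
 \le \limsup_{j} d(w, x_{n_{k_j}})
 \le \limsup_{k} d(w, x_{n_k})
 \le \limsup_{k} d(x_{\infty}, x_{n_k}) = a.
\end{align*}
Thus $\limsup_j d(w,x_{n_{k_j}}) = a$ equals the asymptotic radius of $\{x_{n_{k_j}}\}$, so $w \in \AC(\{x_{n_{k_j}}\}) = \{x_{\infty}\}$ and $w = x_{\infty}$. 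As $\{x_{n_k}\}$ was arbitrary, $\AC(\{x_{n_k}\}) = \{x_{\infty}\}$ for every subsequence, which is precisely $\Delta$-convergence of $\{x_n\}$ to $x_{\infty}$. The main obstacle, and the reason the convergence hypothesis is indispensable, is that asymptotic centers are generally not preserved under passing to subsequences; the displayed chain is exactly the place where the constancy of the asymptotic radius along every subsequence — forced by $d(x_{\infty},x_n) \to a$ — is used to pin the center of an arbitrary subsequence down to $x_{\infty}$.
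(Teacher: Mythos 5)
Your proof is correct. Note that the paper itself gives no proof of this lemma --- it is quoted from Kimura--Saejung--Yotkaew \cite{MR3213144}*{Proposition~3.1} --- so there is nothing internal to compare against; your argument is the standard Opial-type one via asymptotic centers, resting only on Lemma~\ref{lem:EFL-subseq} and elementary $\limsup$ manipulations, and each step (the reduction of $\omega_{\Delta}\bigl(\{x_n\}\bigr)$ to a singleton via the equality $a_p=a_q$ of asymptotic radii, and the chain of inequalities pinning the center of an arbitrary subsequence to $x_{\infty}$) checks out.
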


Let $X$ be an admissible $\CAT(1)$ space. 
A function $f$ of $X$ into $\opclitvl{-\infty}{\infty}$ 
is said to be proper if $f(a) \in \R$ for some $a\in X$. 
It is also said to be convex if 
\begin{align*}
 f\bigl(\alpha x \oplus (1-\alpha) y\bigr) 
 \leq \alpha f(x) + (1-\alpha) f(y)
\end{align*}
whenever $x,y\in X$ and $\alpha \in \openitvl{0}{1}$. 
We denote by $\mathit{\Gamma}_0(X)$ the set of all 
proper lower semicontinuous convex functions 
of $X$ into $\opclitvl{-\infty}{\infty}$. 
The set $\Argmin_X f$ is obviously closed and convex 
for each $f\in \mathit{\Gamma}_0(X)$. 
It follows from~\eqref{eq:CAT1-ineq} that 
$-\cos d(\cdot, z)$ belongs to $\mathit{\Gamma}_0(X)$ 
for all $z\in X$. 
For a nonempty closed convex subset $C$ of $X$, 
the indicator function $i_C$ for $C$, which is defined by 
$i_C(x)=0$ if $x\in C$ and $\infty$ otherwise, 
belongs to $\mathit{\Gamma}_0(X)$.  
See~\cites{MR1113394, MR3523548} on convex functions 
in $\CAT(1)$ spaces. 
A function $f$ of $X$ into $\opclitvl{-\infty}{\infty}$ 
is said to be $\Delta$-lower semicontinuous if 
$f(p) \leq \liminf_{n} f(x_n)$ 
whenever $\{x_n\}$ is a sequence in $X$ 
which is $\Delta$-convergent to $p\in X$. 
A function $g$ of $X$ into $\clopitvl{-\infty}{\infty}$ 
is said to be concave if $-g$ is convex. 

Let $X$ be an admissible complete $\CAT(1)$ space 
and $f$ an element of $\mathit{\Gamma}_0(X)$. 
It is known~\cite{MR3463526}*{Theorem~4.2} that 
for each $x\in X$, 
there exists a unique $\hat{x}\in X$ 
such that 
\begin{align*}
 f(\hat{x})+\tan d(\hat{x},x)\sin d(\hat{x}, x) 
 = \inf_{y\in X} \left\{
 f(y) + \tan d(y, x) \sin d(y, x)\right\}. 
\end{align*}
Following~\cite{MR3463526}*{Definition~4.3}, 
we define the resolvent $R_{f}$ of $f$ by 
\begin{align*}
 R_{f}x =\hat{x} 
\end{align*}
for all $x\in X$. 
In other words, $R_{f}$ can be defined 
by~\eqref{eq:resolvent-CAT1-def2} for all $x\in X$. 
If $f$ is the indicator function $i_C$ 
for a nonempty closed convex subset $C$ of $X$, 
then the resolvent $R_{f}$ coincides with the metric projection $P_C$ 
of $X$ onto $C$, that is, 
\begin{align*}
 R_{f}x
 =\Argmin_{y\in C} \tan d(y, x) \sin d(y, x) 
 =\Argmin_{y\in C} d(y, x) = P_Cx  
\end{align*}
for all $x\in X$. 

It is known~\cite{MR3463526}*{Theorems~4.2 and~4.6} that 
$R_{f}$ is a well-defined and single-valued mapping 
of $X$ into itself, 
\begin{align}\label{eq:resolvent-CAT1-fix}
 \Fix(R_{f})=\Argmin_X f,  
\end{align} 
and 
\begin{align}
 \begin{split}\label{eq:resolvent-CAT1-firm-org}
 &\bigl(
 C_{x}^2 (1+C_{y}^2)C_{y}
 + 
  C_{y}^2 (1+C_{x}^2)C_{x}
 \bigr)
 \cos d(R_{f}x, R_{f}y) \\
 &\geq 
 C_{x}^2 (1+C_{y}^2)
 \cos d(R_{f}x, y)
 +
 C_{y}^2 (1+C_{x}^2)
 \cos d(R_{f}y, x)
 \end{split}
\end{align} 
for all $x,y\in X$, 
where $C_{z}$ the real number 
given by $C_{z}=\cos d(R_{f}z, z)$ 
for all $z\in X$. 

The following lemma was 
recently obtained 
in~\cite{MR3638673}*{Lemma~3.1 and Corollary~3.2}.  
The inequality~\eqref{eq:resolvent-CAT1-firm} 
is a generalization of~\eqref{eq:resolvent-CAT1-firm-org} 
and corresponds to~\cite{MR2780284}*{Lemma~3.1} 
in Banach spaces.   

\begin{lemma}[\cite{MR3638673}*{Lemma~3.1 and
 Corollary~3.2}]
 \label{lem:KK-resolvent-ineq}
 Let $X$ be an admissible complete $\CAT(1)$ space, 
 $f$ an element of $\mathit{\Gamma}_0(X)$, 
 $R_{\eta f}$ the resolvent of $\eta f$ for all $\eta >0$, 
 and $C_{\eta, z}$ the real number given by 
\begin{align}\label{eq:C-def}
 C_{\eta, z}=\cos d(R_{\eta f}z, z)
\end{align}
for all $\eta>0$ and $z\in X$.  
Then 
\begin{align}
 \begin{split}\label{eq:resolvent-CAT1-firm}
 &\bigl(
 \lambda C_{\lambda, x}^2 (1+C_{\mu, y}^2)C_{\mu, y}
 + 
  \mu C_{\mu, y}^2 (1+C_{\lambda, x}^2)C_{\lambda, x}
 \bigr)
 \cos d(R_{\lambda f}x, R_{\mu f}y) \\
 &\geq 
 \lambda C_{\lambda, x}^2 (1+C_{\mu, y}^2)
 \cos d(R_{\lambda f}x, y)
 +
 \mu C_{\mu, y}^2 (1+C_{\lambda, x}^2)
 \cos d(R_{\mu f}y, x)
 \end{split}
\end{align} 
holds for all $x,y\in X$ and $\lambda, \mu>0$. 
Further, 
\begin{align}
 \begin{split}\label{eq:resolvent-CAT1-sqfirm}
 &\frac{\pi}{2}\sk{
 \frac{1}{C_{\lambda, x}^2}+1} 
 \bigl(
 C_{\lambda, x} \cos d(u, R_{\lambda f}x) - \cos d(u, x) 
 \bigr)
 \geq 
 \lambda \bigl(f(R_{\lambda f}x) - f(u)\bigr) 
 \end{split}
\end{align} 
and 
\begin{align}\label{eq:resolvent-CAT1-qfirm}
  \cos d(R_{\lambda f}x, x) \cos d(u, R_{\lambda f}x) \geq \cos d(u, x), 
\end{align} 
hold for all $x\in X$, $u\in \Argmin_X f$, and $\lambda >0$. 
\end{lemma}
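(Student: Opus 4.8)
The plan is to funnel all three inequalities through a single variational estimate for the resolvent and then to combine copies of it in two different ways. Put $g(s)=\tan s\sin s$, so that $g(s)=\sec s-\cos s$ on $\clopitvl{0}{\pi/2}$ and $R_{\lambda f}x$ is by definition the minimiser of $y\mapsto \lambda f(y)+g(d(y,x))$. Writing $p=R_{\lambda f}x$ and $C=C_{\lambda,x}=\cos d(p,x)$, the key step I would establish is the subdifferential-type inequality
\begin{align*}
 \lambda\bigl(f(p)-f(v)\bigr)
 \leq
 \sk{\frac{1}{C^2}+1}\,
 \frac{d(p,v)}{\sin d(p,v)}\,
 \bigl(C\cos d(p,v)-\cos d(v,x)\bigr)
\end{align*}
for every $v\in X$ (the value at $v=p$ being read off by continuity, and the case $f(v)=\infty$ trivial). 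To prove it I would fix $v$, set $p_t=(1-t)\,p\oplus t\,v$ for $t\in\openitvl{0}{1}$, and combine the minimality of $p$ with the convexity of $f$ to get $\lambda t\bigl(f(p)-f(v)\bigr)\leq g(d(p_t,x))-g(d(p,x))$. Lemma~\ref{lem:KS-ineq}, applied to the triple $p,v,x$ (admissibility guarantees~\eqref{eq:three-point} for $\kappa=1$ and positivity of all relevant cosines), supplies a lower bound $\ell_t$ for $\cos d(p_t,x)$ with $\ell_0=C$; since $\tilde g(c)=1/c-c$ is decreasing this yields $g(d(p_t,x))-g(d(p,x))\leq \tilde g(\ell_t)-\tilde g(\ell_0)$. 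Dividing by $t$ and letting $t\to 0^+$ produces the displayed inequality after a short computation of $\tilde g'(C)$ and of $\frac{d}{dt}\ell_t$ at $t=0$.

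For~\eqref{eq:resolvent-CAT1-firm} I would apply the key estimate twice, once to the pair $(p,v)=(R_{\lambda f}x,R_{\mu f}y)$ and once to $(q,v)=(R_{\mu f}y,R_{\lambda f}x)$, where $q=R_{\mu f}y$. The crucial point is that both estimates carry the \emph{same} positive factor $K=d(p,q)/\sin d(p,q)$, because $d$ is symmetric. Multiplying the first by $\mu$ and the second by $\lambda$ and adding makes the terms $\lambda\mu\bigl(f(p)-f(q)\bigr)$ cancel, leaving
\begin{align*}
 0\leq
 K\Bigl[\mu\sk{\tfrac{1}{C_{\lambda,x}^2}+1}\bigl(C_{\lambda,x}\cos d(p,q)-\cos d(q,x)\bigr)
 +\lambda\sk{\tfrac{1}{C_{\mu,y}^2}+1}\bigl(C_{\mu,y}\cos d(p,q)-\cos d(p,y)\bigr)\Bigr].
\end{align*}
Dividing by $K>0$ and multiplying through by $C_{\lambda,x}^2C_{\mu,y}^2>0$ gives exactly~\eqref{eq:resolvent-CAT1-firm}; the degenerate case $p=q$ is checked directly and produces equality.

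For the remaining two inequalities I would specialise the key estimate to $v=u\in\Argmin_X f$. Since $f(p)\geq f(u)$, the left-hand side is nonnegative, and because the factor $\sk{1/C_{\lambda,x}^2+1}\,d(p,u)/\sin d(p,u)$ is strictly positive we must have $C_{\lambda,x}\cos d(p,u)-\cos d(u,x)\geq 0$, which is precisely~\eqref{eq:resolvent-CAT1-qfirm}. Feeding this nonnegativity back, together with the elementary bound $d(p,u)/\sin d(p,u)\leq \pi/2$ valid for $d(p,u)<\pi/2$ (guaranteed by admissibility), into the key estimate replaces the factor $d(p,u)/\sin d(p,u)$ by $\pi/2$ and yields~\eqref{eq:resolvent-CAT1-sqfirm}.

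The main obstacle is the first step: passing to the limit $t\to 0^+$ in the variational inequality. Lemma~\ref{lem:KS-ineq} furnishes only a one-sided lower bound $\ell_t$ for $\cos d(p_t,x)$, so one must invoke the monotonicity of $\tilde g$ to convert it into a correctly oriented upper bound for $g(d(p_t,x))-g(d(p,x))$, and then verify that the difference quotient of the explicit expression $\tilde g(\ell_t)$ converges. Once the key estimate is in hand the rest is algebra; the reason~\eqref{eq:resolvent-CAT1-firm} carries no $\pi/2$, unlike~\eqref{eq:resolvent-CAT1-sqfirm}, is exactly that the common factor $K$ cancels when the two copies of the estimate are added.
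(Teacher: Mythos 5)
Your proof is correct, and it reconstructs essentially the argument behind the cited result: the present paper states Lemma~\ref{lem:KK-resolvent-ineq} without proof, importing it from \cite{MR3638673} (the one-parameter case being \cite{MR3463526}*{Theorem~4.6}), where the derivation is precisely the one you give --- a first-order variational inequality obtained by perturbing $R_{\lambda f}x$ along the geodesic toward $v$, bounding $\cos d(p_t,x)$ from below via Lemma~\ref{lem:KS-ineq} and using the monotonicity of $c\mapsto 1/c-c$, then symmetrizing so the $f$-terms cancel for \eqref{eq:resolvent-CAT1-firm} and specializing to $v=u\in\Argmin_X f$ together with $t/\sin t\le\pi/2$ for \eqref{eq:resolvent-CAT1-qfirm} and \eqref{eq:resolvent-CAT1-sqfirm}. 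The only point to make explicit is that $\ell_t>0$ (guaranteed by admissibility) before applying the monotonicity of $\tilde g$, and that \eqref{eq:resolvent-CAT1-qfirm} must indeed be established before \eqref{eq:resolvent-CAT1-sqfirm}, as you do.
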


We also know the following results.  
\begin{lemma}[\cite{MR3463526}*{Lemma~3.1}]
 \label{lem:KK-Delta-lsc}
 If $X$ is an admissible complete $\CAT(1)$ space, 
 then every $f\in \mathit{\Gamma}_0(X)$ 
 is $\Delta$-lower semicontinuous. 
\end{lemma}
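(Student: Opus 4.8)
The plan is to reduce the $\Delta$-lower semicontinuity of $f$ to the geometric fact that every nonempty closed convex subset $C$ of $X$ is \emph{$\Delta$-closed}, in the sense that whenever a sequence contained in $C$ is $\Delta$-convergent to a point $p$, then $p\in C$. Applied to the sublevel sets of $f$, which are closed and convex because $f\in\mathit{\Gamma}_0(X)$, this immediately yields the desired semicontinuity.

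For the reduction, let $\{x_n\}$ be $\Delta$-convergent to $p$ and put $c=\liminf_n f(x_n)$; the case $c=\infty$ being trivial, assume $c<\infty$. For each real number $M>c$ there are infinitely many indices $n$ with $f(x_n)<M$, and the corresponding subsequence $\{x_{n_i}\}$ is again $\Delta$-convergent to $p$ by the definition of $\Delta$-convergence. Since this subsequence lies in the nonempty closed convex set $C_M=\ck{y\in X: f(y)\le M}$, the $\Delta$-closedness of $C_M$ gives $p\in C_M$, that is, $f(p)\le M$. Letting $M\downarrow c$ yields $f(p)\le c=\liminf_n f(x_n)$ when $c\in\R$; and the remaining possibility $c=-\infty$ cannot occur, since it would force the impossible conclusion $f(p)=-\infty$.

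The crux — and the step I expect to require the most care — is the $\Delta$-closedness itself, for in the $\CAT(1)$ setting one has no linear structure and must argue entirely through the cosine geometry. Here I would let $C$ be nonempty closed convex, $\{z_n\}\subset C$ a sequence $\Delta$-convergent to $p$, and $q=P_C p$ the metric projection of $p$ onto $C$, which is well defined because $P_C$ is precisely the resolvent of the indicator function $i_C\in\mathit{\Gamma}_0(X)$. Applying~\eqref{eq:resolvent-CAT1-qfirm} to $i_C$, with $x=p$ and $u=z_n\in C=\Argmin_X i_C$, gives
\begin{align*}
 \cos d(q,p)\,\cos d(z_n, q)\ge \cos d(z_n, p)
\end{align*}
for every $n$. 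By admissibility all these distances are strictly below $\pi/2$, so every cosine is positive; hence if $q\neq p$ then $\cos d(q,p)<1$, and the inequality forces $\cos d(z_n,q)>\cos d(z_n,p)$, i.e. $d(z_n,q)<d(z_n,p)$, for all $n$. Passing to the $\limsup$ we obtain
\begin{align*}
 \limsup_{n\to\infty} d(z_n,q)\le \limsup_{n\to\infty} d(z_n,p)
 =\inf_{y\in X}\limsup_{n\to\infty} d(y,z_n),
\end{align*}
so that $q$ also attains the infimum defining $\AC\bigl(\{z_n\}\bigr)$. But $\{z_n\}$ is $\Delta$-convergent to $p$, so its asymptotic center is the singleton $\{p\}$ (cf.\ the definition of $\Delta$-convergence and Lemma~\ref{lem:EFL-subseq}); therefore $q=p$, contradicting $q\neq p$. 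Consequently $p=q=P_C p\in C$, which completes the geometric fact and hence the lemma.
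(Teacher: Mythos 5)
Your argument is correct, but it cannot be "the same as the paper's": the paper imports this lemma from \cite{MR3463526}*{Lemma~3.1} without reproducing a proof, so what you have written is a self-contained alternative derived entirely from material available in the present paper. Your route --- reduce $\Delta$-lower semicontinuity to the $\Delta$-closedness of the (nonempty, closed, convex) sublevel sets, and prove $\Delta$-closedness by applying the quasi-firmness inequality \eqref{eq:resolvent-CAT1-qfirm} to the indicator function $i_C$, so that $\cos d(P_Cp,p)\cos d(z_n,P_Cp)\geq \cos d(z_n,p)$ forces $P_Cp$ into the singleton asymptotic center $\AC\bigl(\{z_n\}\bigr)=\{p\}$ --- is sound at every step: the sublevel sets are closed and convex because $f$ is lower semicontinuous and convex, subsequences of a $\Delta$-convergent sequence inherit the $\Delta$-limit (as recorded in Section~\ref{sec:pre}), admissibility keeps every cosine positive so the passage from the cosine inequality to $d(z_n,P_Cp)\leq d(z_n,p)$ and then to the $\limsup$ comparison is legitimate, and your treatment of the degenerate cases $c=\infty$ and $c=-\infty$ is correct. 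What this buys is economy: everything is deduced from Lemma~\ref{lem:KK-resolvent-ineq} and the definition of $\Delta$-convergence, with no extra geometric input. The one caveat worth flagging is logical rather than mathematical: inequality \eqref{eq:resolvent-CAT1-qfirm} is itself imported from \cite{MR3638673}, whose development rests on the resolvent theory of \cite{MR3463526}, where $\Delta$-lower semicontinuity is proved first; so as a reconstruction of the original proof your argument is anachronistic, although for the special function $i_C$ the needed quasi-firmness of the metric projection can be established directly without circularity.
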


\begin{theorem}[\cite{MR3638673}*{Theorem~4.1}]
 \label{thm:maximizer}
 Let $X$ be an admissible complete $\CAT(1)$ space, 
 $\{z_n\}$ a spherically bounded sequence in $X$, 
 $\{\beta_n\}$ a sequence of positive real numbers  
 such that $\sum_{n=1}^{\infty}\beta_n=\infty$, 
 and $g$ the real function on $X$ defined by 
 \begin{align}\label{eq:g-def}
  g(y) = \liminf_{n\to \infty} 
 \frac{1}{\sum_{l=1}^{n}\beta_l}\sum_{k=1}^{n} 
 \beta_k \cos d(y, z_k) 
 \end{align}
 for all $y\in X$. 
 Then $g$ is a $1$-Lipschitz continuous and concave function of $X$ 
 into $[0,1]$ which has a unique maximizer. 
\end{theorem}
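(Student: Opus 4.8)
The plan is to check the four assertions in order, spending most effort on the existence and uniqueness of the maximizer. Throughout I would exploit admissibility: since $d(w,w')<\pi/2$ for all $w,w'\in X$, every distance occurring below is strictly less than $\pi/2$, so that $\cos d(y,z_k)\in\opclitvl{0}{1}$ for all $y\in X$ and $k\in\N$. Hence each average $\bigl(\sum_{l=1}^{n}\beta_l\bigr)^{-1}\sum_{k=1}^{n}\beta_k\cos d(y,z_k)$ lies in $\opclitvl{0}{1}$, and therefore its $\liminf$, namely $g(y)$, lies in $[0,1]$; this settles the range. For $1$-Lipschitz continuity I would combine the elementary bound $\abs{\cos a-\cos b}\le\abs{a-b}$ with the triangle inequality $\abs{d(y,z_k)-d(y',z_k)}\le d(y,y')$ to get $\abs{\cos d(y,z_k)-\cos d(y',z_k)}\le d(y,y')$ for every $k$; averaging preserves this bound for every $n$, and passing to the $\liminf$ (using $\abs{\liminf a_n-\liminf b_n}\le\sup_n\abs{a_n-b_n}$) yields $\abs{g(y)-g(y')}\le d(y,y')$, hence continuity.

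For concavity I would apply the $\CAT(1)$ inequality~\eqref{eq:CAT1-ineq} with $x_3=z_k$; this is legitimate because admissibility forces the perimeter in~\eqref{eq:three-point} (with $\kappa=1$) to be strictly below $3\pi/2<2\pi=2D_1$. It gives $\cos d(\alpha x\oplus(1-\alpha)y,z_k)\ge\alpha\cos d(x,z_k)+(1-\alpha)\cos d(y,z_k)$. Averaging with the weights $\beta_k$ and then using the superadditivity of $\liminf$ together with $\liminf(\alpha a_n)=\alpha\liminf a_n$ for $\alpha>0$ gives $g\bigl(\alpha x\oplus(1-\alpha)y\bigr)\ge\alpha g(x)+(1-\alpha)g(y)$.

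The heart of the proof is a quantitative sharpening of concavity. Taking $\alpha=1/2$ in Lemma~\ref{lem:KS-ineq} (that is, in~\eqref{eq:KS-ineq}, applicable by the same perimeter bound) with $x_3=z_k$ and using $\sin d(x_1,x_2)=2\sin\bigl(\tfrac12 d(x_1,x_2)\bigr)\cos\bigl(\tfrac12 d(x_1,x_2)\bigr)$, I would obtain, for the midpoint $m=\tfrac12 x_1\oplus\tfrac12 x_2$,
\begin{equation*}
\cos d(m,z_k)\ge\frac{\cos d(x_1,z_k)+\cos d(x_2,z_k)}{2\cos\bigl(\tfrac12 d(x_1,x_2)\bigr)}.
\end{equation*}
Since the displayed factor is independent of $k$ and $n$, averaging and taking $\liminf$ yields the central inequality
\begin{equation*}
g(m)\ge\frac{g(x_1)+g(x_2)}{2\cos\bigl(\tfrac12 d(x_1,x_2)\bigr)}.\tag{$\star$}
\end{equation*}
I would also record that $M:=\sup_{y\in X}g(y)>0$: spherical boundedness provides $y_0\in X$ with $\limsup_{n}d(y_0,z_n)=r<\pi/2$, so choosing $r'\in\openitvl{r}{\pi/2}$ we have $\cos d(y_0,z_k)\ge\cos r'=:c>0$ for all large $k$; since $\sum_n\beta_n=\infty$, the tail ratios $\sum_{k\ge N}\beta_k\big/\sum_{l\le n}\beta_l$ tend to $1$, whence $g(y_0)\ge c>0$.

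Finally, existence and uniqueness follow from~$(\star)$ and $M>0$. Because $g(m)\le M$, inequality~$(\star)$ rearranges to $\cos\bigl(\tfrac12 d(x_1,x_2)\bigr)\ge\bigl(g(x_1)+g(x_2)\bigr)/(2M)$. For any maximizing sequence $\{y_m\}$ with $g(y_m)\to M$ this forces $\cos\bigl(\tfrac12 d(y_m,y_{m'})\bigr)\to1$, hence $d(y_m,y_{m'})\to0$; completeness then yields a limit $y^{*}$, and $1$-Lipschitz continuity gives $g(y^{*})=M$, so $y^{*}$ is a maximizer. If $y^{*}$ and $y^{**}$ are two maximizers, the same rearrangement gives $\cos\bigl(\tfrac12 d(y^{*},y^{**})\bigr)\ge1$, forcing $d(y^{*},y^{**})=0$. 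I expect the main obstacle to be deriving~$(\star)$ cleanly from~\eqref{eq:KS-ineq} and verifying $M>0$; once these are in place, the Cauchy/completeness argument for existence and the one-line argument for uniqueness are routine.
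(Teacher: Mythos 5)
Your proof is correct. Note that the paper itself does not prove this statement---it is imported verbatim as Theorem~4.1 of the cited reference \cite{MR3638673}---but your argument (quantitative midpoint concavity $g(m)\ge\bigl(g(x_1)+g(x_2)\bigr)/\bigl(2\cos\bigl(\tfrac12 d(x_1,x_2)\bigr)\bigr)$ derived from Lemma~\ref{lem:KS-ineq} with $\alpha=1/2$, positivity of $\sup g$ from spherical boundedness and $\sum_n\beta_n=\infty$, and then the Cauchy/completeness argument) is exactly the standard route used in that reference, and all the individual steps check out.
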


It is obvious that if 
$A$ is a nonempty bounded subset of $\R$, 
$I$ is a closed subset of $\R$ containing $A$, 
and $f$ is a continuous and nondecreasing real function on $I$, 
then $f(\sup A) =\sup f(A)$ and $f(\inf A) =\inf f(A)$. 
Thus we obtain the following. 

\begin{lemma}\label{lem:limsup-liminf}
 Let $I$ be a nonempty closed subset of $\R$, 
 $\{t_n\}$ a bounded sequence in $I$, 
 and $f$ a continuous real function on $I$. 
 Then the following hold.   
 \begin{enumerate}
  \item[(i)] If $f$ is nondecreasing, 
    then $f(\limsup_n t_n)= \limsup_n f(t_n)$; 
  \item[(ii)] if $f$ is nonincreasing, 
    then $f(\limsup_n t_n)= \liminf_n f(t_n)$.  
 \end{enumerate}
\end{lemma}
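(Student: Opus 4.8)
The plan is to reduce both statements to the displayed ``obvious'' fact applied to the tails of $\{t_n\}$, combined with the continuity of $f$. For each $n\in\N$ I would set $A_n=\{t_k:k\geq n\}$ and $a_n=\sup A_n$. Since $\{t_n\}$ is bounded, each $A_n$ is a nonempty bounded subset of $I$; because $I$ is closed, $a_n=\sup A_n\in I$, so that $f(a_n)$ is defined. The sequence $\{a_n\}$ is nonincreasing and converges to $\limsup_n t_n$, and since each $a_n$ lies in the closed set $I$ we also obtain $\limsup_n t_n\in I$, so $f(\limsup_n t_n)$ makes sense.

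For part (i), with $f$ nondecreasing I would apply the ``obvious'' fact to each $A_n$ to get $f(a_n)=f(\sup A_n)=\sup f(A_n)=\sup_{k\geq n} f(t_k)$. Hence $\limsup_n f(t_n)=\inf_n\sup_{k\geq n}f(t_k)=\inf_n f(a_n)$. Since $\{a_n\}$ is nonincreasing and $f$ is nondecreasing, $\{f(a_n)\}$ is nonincreasing, so $\inf_n f(a_n)=\lim_n f(a_n)$. Finally, continuity of $f$ together with $a_n\to\limsup_n t_n$ yields $\lim_n f(a_n)=f(\limsup_n t_n)$, which is (i).

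Part (ii) is handled in the same way after observing that a nonincreasing continuous $f$ can be written as $-g$ with $g$ nondecreasing and continuous, so that the ``obvious'' fact gives $f(\sup A_n)=-g(\sup A_n)=-\sup g(A_n)=\inf f(A_n)$. Thus $f(a_n)=\inf_{k\geq n}f(t_k)$, and $\liminf_n f(t_n)=\sup_n\inf_{k\geq n}f(t_k)=\sup_n f(a_n)$. Here $\{f(a_n)\}$ is nondecreasing, since $\{a_n\}$ is nonincreasing and $f$ is nonincreasing, so $\sup_n f(a_n)=\lim_n f(a_n)=f(\limsup_n t_n)$ again by continuity.

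The argument is elementary; the only points requiring care, and the closest thing to an obstacle, are ensuring that $\sup A_n$ and $\limsup_n t_n$ actually lie in $I$ (this is exactly where closedness of $I$ enters, so that $f$ may be evaluated there) and correctly tracking the monotonicity of $\{f(a_n)\}$, so that the outer $\inf_n$ in (i) and $\sup_n$ in (ii) can be replaced by a genuine limit before continuity of $f$ is invoked.
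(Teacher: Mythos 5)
Your proof is correct and follows exactly the route the paper intends: the paper gives no written proof beyond stating the ``obvious'' fact that $f(\sup A)=\sup f(A)$ and $f(\inf A)=\inf f(A)$ for continuous nondecreasing $f$ on a closed set $I\supset A$, and your argument is precisely the careful elaboration of that fact applied to the tails $A_n=\{t_k:k\geq n\}$, with the monotonicity and continuity steps correctly tracked.
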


\section{Resolvents of convex functions in $\textup{CAT}(1)$ spaces}
\label{sec:res}

In this section, we obtain three fundamental lemmas  
on the resolvents of convex functions in 
$\CAT(1)$ spaces. 

The following lemma corresponds 
to~\cite{MR2780284}*{Lemmas~3.5 and~3.6} 
in Banach spaces.   

\begin{lemma}\label{lem:demi}
 Let $X$ be an admissible complete $\CAT(1)$ space, 
 $f$ an element of $\mathit{\Gamma}_0(X)$, 
 $R_{\eta f}$ the resolvent of $\eta f$ for all $\eta >0$, 
 $\{\lambda_n\}$ a sequence of positive real numbers,  
 $p$ an element of $X$, 
 and $\{x_n\}$ a sequence in $X$. 
 Then the following hold. 
 \begin{enumerate}
  \item[(i)] If $\inf_n \lambda_n>0$, 
 $\AC\bigl(\{x_n\}\bigr)=\{p\}$, 
 and $\lim_n d(R_{\lambda_n f}x_n, x_n) = 0$, 
 then $p$ is an element of $\Argmin_X f$; 
  \item[(ii)] if $\lim_n \lambda_n = \infty$, 
 $\AC\bigl(\{R_{\lambda_n f} x_n\}\bigr)=\{p\}$, 
 and $\sup_n d(R_{\lambda_n f}x_n, x_n) <\pi/2$, 
 then $p$ is an element of $\Argmin_X f$. 
 \end{enumerate}
\end{lemma}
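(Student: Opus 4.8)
The plan is to prove both statements by extracting, along a suitable subsequence, the key inequality~\eqref{eq:resolvent-CAT1-sqfirm} and then using the $\Delta$-lower semicontinuity of $f$ guaranteed by Lemma~\ref{lem:KK-Delta-lsc} to conclude that $p$ minimizes $f$. The unifying idea is that the defining resolvent inequality bounds $\lambda_n(f(R_{\lambda_n f}x_n)-f(u))$ from above by a geometric quantity that forces $f(p)\le f(u)$ for every $u$, whence $p\in\Argmin_X f$. I expect the two cases to differ only in how one verifies that the relevant quantity $d(R_{\lambda_n f}x_n,x_n)$ stays controlled, and in whether the asymptotic-center hypothesis is placed on $\{x_n\}$ or on $\{R_{\lambda_n f}x_n\}$.

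For part~(i), I would first observe that since $\AC(\{x_n\})=\{p\}$ the sequence $\{x_n\}$ is spherically bounded, and because $\lim_n d(R_{\lambda_n f}x_n,x_n)=0$ the sequences $\{x_n\}$ and $\{R_{\lambda_n f}x_n\}$ have the same asymptotic behavior; in particular $\{R_{\lambda_n f}x_n\}$ is spherically bounded and $\AC(\{R_{\lambda_n f}x_n\})=\{p\}$ as well. Fixing an arbitrary $u\in X$, I would apply~\eqref{eq:resolvent-CAT1-sqfirm} with $x=x_n$ and $\lambda=\lambda_n$. The left-hand side contains the factor $C_{\lambda_n,x_n}\cos d(u,R_{\lambda_n f}x_n)-\cos d(u,x_n)$, where $C_{\lambda_n,x_n}=\cos d(R_{\lambda_n f}x_n,x_n)\to 1$; together with $d(R_{\lambda_n f}x_n,x_n)\to 0$ this factor tends to $0$. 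Dividing by $\lambda_n$ and using $\inf_n\lambda_n>0$, I would deduce $\limsup_n(f(R_{\lambda_n f}x_n)-f(u))\le 0$, so that $\liminf_n f(R_{\lambda_n f}x_n)\le f(u)$. Finally, passing to a subsequence of $\{R_{\lambda_n f}x_n\}$ that is $\Delta$-convergent to $p$ (which exists by Lemma~\ref{lem:EFL-subseq}) and invoking $\Delta$-lower semicontinuity, I would obtain $f(p)\le\liminf f(R_{\lambda_n f}x_n)\le f(u)$ for all $u$, hence $p\in\Argmin_X f$.

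For part~(ii), the hypothesis $\AC(\{R_{\lambda_n f}x_n\})=\{p\}$ already makes $\{R_{\lambda_n f}x_n\}$ spherically bounded, so I would not need to transfer the asymptotic center as in~(i). Again fixing $u\in X$ and applying~\eqref{eq:resolvent-CAT1-sqfirm}, the crucial point is that the prefactor $\tfrac{\pi}{2}(1/C_{\lambda_n,x_n}^2+1)$ stays bounded: the bound $\sup_n d(R_{\lambda_n f}x_n,x_n)<\pi/2$ gives $\inf_n C_{\lambda_n,x_n}>0$, so the left-hand side of~\eqref{eq:resolvent-CAT1-sqfirm} is bounded above uniformly in $n$. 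Since $\lambda_n\to\infty$, dividing by $\lambda_n$ forces $\limsup_n(f(R_{\lambda_n f}x_n)-f(u))\le 0$, exactly as before. The conclusion $f(p)\le f(u)$ then follows by selecting a $\Delta$-convergent subsequence of $\{R_{\lambda_n f}x_n\}$ and applying Lemma~\ref{lem:KK-Delta-lsc}.

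The main obstacle in both parts is controlling the left-hand side of~\eqref{eq:resolvent-CAT1-sqfirm} so that, after dividing by $\lambda_n$, it vanishes in the limit: in~(i) this rests on $d(R_{\lambda_n f}x_n,x_n)\to 0$ making both the factor and $1-C_{\lambda_n,x_n}$ small while $\lambda_n$ stays bounded below, and in~(ii) on the uniform positivity of $C_{\lambda_n,x_n}$ keeping the prefactor bounded while $\lambda_n\to\infty$ supplies the decay. Once that limit is secured, the $\Delta$-lower semicontinuity lemma closes the argument in a routine way, and the fact that $u\in X$ was arbitrary yields membership in $\Argmin_X f$.
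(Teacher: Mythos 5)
Your overall strategy --- establish $f(p)\le f(u)$ for every $u\in X$ and conclude $p\in\Argmin_X f$ --- is not the paper's route, and as written it has two genuine gaps. First, the inequality \eqref{eq:resolvent-CAT1-sqfirm} on which your whole argument hinges is asserted in Lemma~\ref{lem:KK-resolvent-ineq} only for $u\in\Argmin_X f$, yet you apply it to an \emph{arbitrary} $u\in X$. Since the purpose of Lemma~\ref{lem:demi} is precisely to produce a minimizer (in the proof of Theorem~\ref{thm:Halpern-CAT1} it is the tool that shows $\Argmin_X f$ is nonempty), invoking an estimate that presupposes a minimizer is circular, and the paper provides no general-$u$ version of \eqref{eq:resolvent-CAT1-sqfirm} to fall back on. Second, your closing step passes to ``a subsequence of $\{R_{\lambda_n f}x_n\}$ that is $\Delta$-convergent to $p$.'' Lemma~\ref{lem:EFL-subseq} only supplies a $\Delta$-convergent subsequence with an unspecified limit $q$; the hypothesis $\AC\bigl(\{x_n\}\bigr)=\{p\}$ (or $\AC\bigl(\{R_{\lambda_n f}x_n\}\bigr)=\{p\}$) concerns the full sequence, and the asymptotic center of a subsequence can be a different point. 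So even if the first gap were repaired, your argument would only show that \emph{some} $q\in\omega_{\Delta}\bigl(\{R_{\lambda_n f}x_n\}\bigr)$ lies in $\Argmin_X f$, not that $p$ itself does, which is strictly weaker than the stated conclusion and insufficient for the later applications in which the lemma is used to identify specific $\Delta$-limits as minimizers.

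The paper's proof sidesteps both problems by never touching function values. It applies \eqref{eq:resolvent-CAT1-firm} with $y=p$ and $\mu=1$ --- an inequality valid for all points, not just minimizers --- to bound $\cos d(R_{\lambda_n f}x_n, R_f p)$ from below by $\cos d(R_{\lambda_n f}x_n, p)$ plus a correction term, shows that the correction vanishes under either set of hypotheses (in~(i) because $\inf_n\lambda_n>0$, $C_{\lambda_n,x_n}\to 1$, and $\cos$ is $1$-Lipschitz; in~(ii) because $\inf_n C_{\lambda_n,x_n}>0$ while $\lambda_n\to\infty$), deduces $\limsup_n d(R_{\lambda_n f}x_n, R_f p)\le \limsup_n d(R_{\lambda_n f}x_n, p)$, transfers this to $\{x_n\}$ in case~(i), and then uses the uniqueness of the asymptotic center to force $R_f p=p$, whence $p\in\Fix(R_f)=\Argmin_X f$ by \eqref{eq:resolvent-CAT1-fix}. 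To salvage your approach you would need both a version of \eqref{eq:resolvent-CAT1-sqfirm} valid for all $u\in X$ and a separate argument identifying the subsequential $\Delta$-limit with $p$; the asymptotic-center argument is the cleaner path.
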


\begin{proof}
 Let $C_{\eta, z}$ be the real number in $\opclitvl{0}{1}$ 
 given by~\eqref{eq:C-def} 
 for all $\eta>0$ and $z\in X$.  
 It follows from~\eqref{eq:resolvent-CAT1-firm} that 
\begin{align*}
 \begin{split}
 &\bigl(
 \lambda_n C_{\lambda_n, x_n}^2 (1+C_{1, p}^2)
 + 
  C_{1, p}^2 (1+C_{\lambda_n, x_n}^2)
 \bigr)
 \cos d(R_{\lambda_n f}x_n, R_{f}p) \\
 &\geq 
 \lambda_n C_{\lambda_n, x_n}^2 (1+C_{1, p}^2)
 \cos d(R_{\lambda_n f}x_n, p)
 +
 C_{1, p}^2 (1+C_{\lambda_n, x_n}^2)
 \cos d(R_{f}p, x_n)
 \end{split}
\end{align*} 
 and hence 
\begin{align}
 \begin{split}\label{eq:lem:demi-a}
 & \cos d(R_{\lambda_n f}x_n, R_{f}p) \\
 &\geq 
  \cos d(R_{\lambda_n f}x_n, p) \\
 &\quad + \frac{C_{1,p}^2}{1+C_{1,p}^2} 
 \cdot \frac{1+C_{\lambda_n, x_n}^2}{\lambda_n C_{\lambda_n, x_n}^2}
 \bigl(\cos d(R_{f}p, x_n) - \cos d(R_{f}p, R_{\lambda_n f}x_n)\bigr)
 \end{split}
\end{align} 
 for all $n\in \N$. 

 We first show~(i). 
 Suppose that the assumptions hold. 
 Since $\lim_n C_{\lambda_n, x_n}=1$ 
 and $\inf_{n}\lambda_n>0$, 
 the sequence 
 \begin{align*}
 \ck{\frac{1+C_{\lambda_n, x_n}^2}{\lambda_n C_{\lambda_n, x_n}^2}}
 \end{align*}
 is bounded. 
 Since $t\mapsto \cos t$ is $1$-Lipschitz continuous and 
 $\lim_n d(R_{\lambda_n f}x_n, x_n) = 0$, we have 
 \begin{align*}
  \begin{split}
  \abs{\cos d(R_{f}p, x_n) 
     - \cos d(R_{f}p, R_{\lambda_n f}x_n)} 
  &\leq \abs{d(R_{f}p, x_n) 
     - d(R_{f}p, R_{\lambda_n f}x_n)} \\
  &\leq d(x_n, R_{\lambda_n f}x_n) \to 0
  \end{split}
 \end{align*}
 as $n\to \infty$. 
 Taking the lower limit in~\eqref{eq:lem:demi-a}, 
 we have 
 \begin{align*}
  \liminf_{n\to \infty} \cos d(R_{\lambda_n f}x_n, R_{f}p) 
 \geq \liminf_{n\to \infty} \cos d(R_{\lambda_n f}x_n, p). 
 \end{align*}
 It then follows from Lemma~\ref{lem:limsup-liminf} that 
 \begin{align*}
  \cos \left(\limsup_{n\to \infty} d(R_{\lambda_n f}x_n, R_{f}p)\right) 
  \geq \cos \left(\limsup_{n\to \infty} d(R_{\lambda_n f}x_n, p)\right) 
 \end{align*}
 and hence 
 \begin{align}\label{eq:lem:demi-b}
  \limsup_{n\to \infty} d(R_{\lambda_n f}x_n, R_{f}p)
   \leq 
  \limsup_{n\to \infty} d(R_{\lambda_n f}x_n, p). 
 \end{align}
 On the other hand, since $\lim_n d(R_{\lambda_n f}x_n, x_n)=0$, 
 we have 
 \begin{align}\label{eq:lem:demi-c}
  \limsup_{n\to \infty} d(R_{\lambda_n f}x_n, y) = \limsup_{n\to \infty} d(x_n, y) 
 \end{align}
 for all $y\in X$. 
 By~\eqref{eq:lem:demi-b} and~\eqref{eq:lem:demi-c}, we have 
 \begin{align*}
  \limsup_{n\to \infty} d(x_n, R_{f}p)
   \leq 
  \limsup_{n\to \infty} d(x_n, p). 
 \end{align*}
 Since $\AC\bigl(\{x_n\}\bigr)=\{p\}$, we obtain $R_{f} p = p$. 
 Consequently, it follows from~\eqref{eq:resolvent-CAT1-fix} 
 that $p \in \Argmin_X f$. 

 We next show~(ii). 
 Suppose that the assumptions hold. 
 Since 
 \begin{align*}
  \sup_n d(R_{\lambda_n f}x_n, x_n)< \frac{\pi}{2}, 
 \end{align*}
 we know that 
 \begin{align*}
  0 < \cos \sk{\sup_n d(R_{\lambda_n f}x_n, x_n)}
   = \inf_n \cos d(R_{\lambda_n f}x_n, x_n) =\inf_n C_{\lambda_n, x_n}.  
 \end{align*}
 Thus it follows from $\lim_n \lambda_n = \infty$ that 
 \begin{align*}
  0 < \frac{1+C_{\lambda_n, x_n}^2}{\lambda_n C_{\lambda_n, x_n}^2} 
 \leq \frac{2}{\lambda_n \sk{\inf_mC_{\lambda_m, x_m}}^2} \to 0 
 \end{align*}
 as $n\to \infty$. 
 Taking the lower limit in~\eqref{eq:lem:demi-a}, 
 we know that~\eqref{eq:lem:demi-b} holds. 
 Then, since $\AC\bigl(\{R_{\lambda_n f}x_n\}\bigr)=\{p\}$, 
 we obtain $R_{f}p=p$ and hence $p\in \Argmin_X f$. 
\end{proof}

We also need the following two lemmas. 

\begin{lemma}\label{lem:sqns}
 Let $X$ be an admissible complete $\CAT(1)$ space, 
 $f$ an element of $\mathit{\Gamma}_0(X)$, 
 $R_{\eta f}$ the resolvent of $\eta f$ for all $\eta >0$, 
 and 
 $\{\lambda_n\}$ a sequence of positive real numbers.  
 If $\{x_n\}$ is a sequence in $X$ such that 
 \begin{align*}
  \lim_{n\to \infty} \bigl(\cos d(u, R_{\lambda_n f}x_n) 
 - \cos d(u, x_n) \bigr) = 0 
 \quad \textrm{and} \quad 
  \sup_n d(u, x_n) < \frac{\pi}{2}
 \end{align*}
 for some $u\in \Argmin_X f$, 
 then $\lim_n d(R_{\lambda_n f}x_n, x_n)=0$. 
\end{lemma}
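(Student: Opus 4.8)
The plan is to extract the conclusion directly from inequality~\eqref{eq:resolvent-CAT1-qfirm} of Lemma~\ref{lem:KK-resolvent-ineq}. Taking $x=x_n$ and $\lambda=\lambda_n$ there, and abbreviating $a_n=\cos d(u,R_{\lambda_n f}x_n)$ and $b_n=\cos d(u,x_n)$, it reads
\begin{align*}
 \cos d(R_{\lambda_n f}x_n, x_n)\cdot a_n \geq b_n.
\end{align*}
Since $X$ is admissible, $d(u,R_{\lambda_n f}x_n)<\pi/2$ and hence $a_n>0$, so I may divide to obtain $\cos d(R_{\lambda_n f}x_n, x_n)\geq b_n/a_n$ for all $n\in\N$.

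First I would use the hypothesis $\sup_n d(u,x_n)<\pi/2$ to produce a uniform lower bound $b_n\geq \cos\bigl(\sup_n d(u,x_n)\bigr)>0$; call this positive constant $\delta$. Then, writing $b_n/a_n=\bigl(1+(a_n-b_n)/b_n\bigr)^{-1}$ and noting that $\abs{a_n-b_n}/b_n\leq \abs{a_n-b_n}/\delta\to 0$ by the assumption $a_n-b_n\to 0$, I conclude that $b_n/a_n\to 1$. Feeding this into the inequality above yields $\liminf_n \cos d(R_{\lambda_n f}x_n, x_n)\geq 1$, and since cosines of distances never exceed $1$ this forces $\cos d(R_{\lambda_n f}x_n, x_n)\to 1$.

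It remains to translate this back into distances. As $\{d(R_{\lambda_n f}x_n,x_n)\}$ is a bounded sequence in the closed set $[0,\pi/2]$, on which $\cos$ is continuous and nonincreasing, Lemma~\ref{lem:limsup-liminf}(ii) gives $\cos\bigl(\limsup_n d(R_{\lambda_n f}x_n, x_n)\bigr)=\liminf_n \cos d(R_{\lambda_n f}x_n, x_n)=1$, whence $\limsup_n d(R_{\lambda_n f}x_n, x_n)=0$ and therefore $\lim_n d(R_{\lambda_n f}x_n, x_n)=0$, as required.

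I expect no serious obstacle: the argument is a short manipulation of one inequality together with two elementary limit facts. The only places demanding attention are the two positivity issues that the hypotheses are precisely tailored to resolve, namely the positivity of the denominator $a_n$ (secured by admissibility of $X$) and the uniform lower bound $b_n\geq\delta>0$ (secured by $\sup_n d(u,x_n)<\pi/2$), without which the ratio $b_n/a_n$ could fail to converge to $1$.
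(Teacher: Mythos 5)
Your proof is correct and follows essentially the same route as the paper's: both start from the quasi-firmness inequality~\eqref{eq:resolvent-CAT1-qfirm}, use $\sup_n d(u,x_n)<\pi/2$ to bound the relevant cosine away from zero, deduce that the ratio $\cos d(u,x_n)/\cos d(u,R_{\lambda_n f}x_n)$ tends to $1$, and squeeze $\cos d(R_{\lambda_n f}x_n,x_n)\to 1$. The only cosmetic difference is that the paper normalizes the difference by $\cos d(u,R_{\lambda_n f}x_n)$ (whose positive lower bound it first derives from the same inequality) rather than by $\cos d(u,x_n)$ as you do.
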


\begin{proof}
 It follows from~\eqref{eq:resolvent-CAT1-qfirm} that 
 \begin{align}\label{eq:lem:sqns-a}
  \cos d(u, R_{\lambda_n f}x_n) 
 \geq \cos d(R_{\lambda_n f}x_n, x_n) \cos d(u, R_{\lambda_n f}x_n) 
 \geq \cos d(u, x_n)
 \end{align}
 and hence 
 \begin{align}\label{eq:lem:sqns-b}
  \inf_n \cos d(u, R_{\lambda_n f}x_n) 
  \geq \inf_n \cos d(u, x_n)
  = \cos \sk{\sup_n d(u, x_n)} > 0
 \end{align}
 and 
 \begin{align}
  \begin{split}\label{eq:lem:sqns-c}
   0
 &\leq \cos d(u, R_{\lambda_n f}x_n) 
  \sk{1-\frac{\cos d(u, x_n)}{\cos d(u, R_{\lambda_n f}x_n)}} \\
 &= \cos d(u, R_{\lambda_n f}x_n) - \cos d(u, x_n) \to 0
  \end{split}
 \end{align}
 as $n\to \infty$. 
 Thus it follows from~\eqref{eq:lem:sqns-b} 
 and~\eqref{eq:lem:sqns-c} that 
 \begin{align}\label{eq:lem:sqns-d}
  \lim_{n\to \infty}\frac{\cos d(u, x_n)}
 {\cos d(u, R_{\lambda_n f}x_n)} = 1. 
 \end{align}
 By~\eqref{eq:lem:sqns-a} 
 and~\eqref{eq:lem:sqns-d}, we obtain 
 \begin{align*}
  1 \geq \cos d(R_{\lambda_n f}x_n, x_n) 
  \geq \frac{\cos d(u, x_n)}{\cos d(u, R_{\lambda_n f}x_n)} 
  \to 1
 \end{align*} 
 as $n\to \infty$ and hence $\lim_{n}d(R_{\lambda_n f}x_n, x_n) = 0$. 
\end{proof}

\begin{lemma}\label{lem:limsup-cos}
 Let $X$ be an admissible complete $\CAT(1)$ space, 
 $F$ a nonempty closed convex subset of $X$, 
 $P$ the metric projection of $X$ onto $F$, 
 and 
 $\{x_n\}$ a spherically bounded sequence in $X$. 
 If $\omega_{\Delta}\bigl(\{x_n\}\bigr)$ is a subset of $F$, 
 then 
 \begin{align*}
 \cos d(P v, v) \geq \limsup_{n\to \infty} \cos d(x_n, v) 
 \end{align*}
 for all $v\in X$.  
\end{lemma}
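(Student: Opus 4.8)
The plan is to realize the quantity $\limsup_{n}\cos d(x_n,v)$ as a limit along a suitable $\Delta$-convergent subsequence of $\{x_n\}$ whose $\Delta$-limit is forced to lie in $F$ by the hypothesis $\omega_{\Delta}\bigl(\{x_n\}\bigr)\subset F$, and then to compare with $Pv$ by combining the $\Delta$-upper semicontinuity of $\cos d(\cdot,v)$ with the nearest-point property of the metric projection. Fix $v\in X$. Since the sequence $\{\cos d(x_n,v)\}$ is bounded, I first choose a subsequence $\{x_{n_i}\}$ of $\{x_n\}$ along which $\cos d(x_{n_i},v)$ converges to $\limsup_{n}\cos d(x_n,v)$.

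Next I would pass to a $\Delta$-convergent sub-subsequence. As a subsequence of a spherically bounded sequence, $\{x_{n_i}\}$ is itself spherically bounded, so by Lemma~\ref{lem:EFL-subseq} it admits a subsequence $\{x_{n_{i_j}}\}$ that is $\Delta$-convergent to some $q\in X$. Because $\{x_{n_{i_j}}\}$ is also a subsequence of $\{x_n\}$, its $\Delta$-limit satisfies $q\in\omega_{\Delta}\bigl(\{x_n\}\bigr)\subset F$. Moreover, being a subsequence of the convergent sequence $\{\cos d(x_{n_i},v)\}$, the quantity $\cos d(x_{n_{i_j}},v)$ still converges to $\limsup_{n}\cos d(x_n,v)$.

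The key step is to invoke the $\Delta$-lower semicontinuity of $-\cos d(\cdot,v)$. By~\eqref{eq:CAT1-ineq} the function $-\cos d(\cdot,v)$ belongs to $\mathit{\Gamma}_0(X)$, so Lemma~\ref{lem:KK-Delta-lsc} applies and gives $-\cos d(q,v)\le\liminf_{j}\bigl(-\cos d(x_{n_{i_j}},v)\bigr)$, that is, $\cos d(q,v)\ge\limsup_{j}\cos d(x_{n_{i_j}},v)=\limsup_{n}\cos d(x_n,v)$. Finally, since $q\in F$ and $Pv$ is the nearest point of $F$ to $v$, we have $d(Pv,v)\le d(q,v)$; as both distances lie in $\clopitvl{0}{\pi/2}$ by admissibility and $\cos$ is decreasing there, this yields $\cos d(Pv,v)\ge\cos d(q,v)$, and chaining the two inequalities completes the proof.

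I expect the only genuinely nontrivial ingredient to be the translation of the convexity-plus-lower-semicontinuity of $-\cos d(\cdot,v)$ into $\Delta$-upper semicontinuity of $\cos d(\cdot,v)$ at the $\Delta$-limit $q$; once Lemma~\ref{lem:KK-Delta-lsc} supplies this, everything else is routine bookkeeping with subsequences together with the elementary monotonicity of cosine and the defining minimality of the metric projection $P$.
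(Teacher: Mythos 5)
Your proof is correct and follows essentially the same route as the paper's: extract a subsequence realizing the $\limsup$, pass to a $\Delta$-convergent sub-subsequence via Lemma~\ref{lem:EFL-subseq} whose limit $q$ lies in $F$ by hypothesis, apply Lemma~\ref{lem:KK-Delta-lsc} to $-\cos d(\cdot,v)\in\mathit{\Gamma}_0(X)$, and compare with $Pv$ via the nearest-point property. The only difference is that you make the two-stage subsequence extraction explicit where the paper compresses it into a single application of Lemma~\ref{lem:EFL-subseq}; this is a matter of presentation, not substance.
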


\begin{proof}
 Let $v\in X$ be given. 
 Since $\{x_n\}$ is spherically bounded, 
 Lemma~\ref{lem:EFL-subseq} 
 ensures that there exists a subsequence $\{x_{n_i}\}$ of $\{x_n\}$ 
 which is $\Delta$-convergent to some $q\in X$ and 
 \begin{align}\label{eq:lem:limsup-cos-a}
  \lim_{i\to \infty} \cos d(x_{n_i}, v) = 
  \limsup_{n\to \infty} \cos d(x_n, v). 
 \end{align}
 Since $\omega_{\Delta}\bigl(\{x_n\}\bigr) \subset F$, 
 we have $q\in F$ and hence the definition of $P$ gives us that 
 \begin{align}\label{eq:lem:limsup-cos-b}
  \cos d(P v, v) \geq \cos d(q, v). 
 \end{align}
 Since $-\cos d(\cdot, v)$ belongs to $\mathit{\Gamma}_0(X)$, 
 Lemma~\ref{lem:KK-Delta-lsc} implies that 
 it is $\Delta$-lower semicontinuous. 
 Thus we have 
  \begin{align}\label{eq:lem:limsup-cos-c}
   - \cos d(q, v) \leq 
 \liminf_{i\to \infty} \bigl(-\cos d(x_{n_i}, v) \bigr) 
= - \lim_{i\to \infty} \cos d(x_{n_i}, v).  
 \end{align}
 By~\eqref{eq:lem:limsup-cos-a},~\eqref{eq:lem:limsup-cos-b}, 
 and~\eqref{eq:lem:limsup-cos-c}, we obtain the conclusion. 
\end{proof}

\section{$\Delta$-convergent proximal-type algorithm}
\label{sec:Mann}

In this section, using 
some techniques from~\cite{MR3638673}, 
we obtain the following theorem, 
which is a generalization of Theorem~\ref{thm:KimuraKohsaka-PPA-CAT1}. 
This is the first one of our three main results in this paper. 

\begin{theorem}\label{thm:Mann-CAT1}
 Let $X$ be an admissible complete $\CAT(1)$ space, 
 $f$ an element of $\mathit{\Gamma}_0(X)$, 
 $R_{\eta f}$ the resolvent of $\eta f$ 
 for all $\eta >0$,  
 and $\{x_n\}$ a sequence defined by $x_1\in X$ 
 and~\eqref{eq:Mann-CAT1}, 
 where $\{\alpha_n\}$ is a sequence in $\clopitvl{0}{1}$ 
 and $\{\lambda_n\}$ is a sequence of positive real numbers  
 such that $\sum_{n=1}^{\infty}(1-\alpha_n) \lambda_n=\infty$.  
 Then the following hold. 
 \begin{enumerate}
  \item[(i)]  The set $\Argmin_X f$ is nonempty 
 if and only if 
 $\{R_{\lambda_n f}x_n\}$ is spherically bounded and 
 $\sup_{n} d(R_{\lambda_n f}x_n, x_n)<\pi/2$; 
  \item[(ii)] if $\Argmin_X f$ is nonempty and $\sup_n\alpha_n<1$, 
 then both $\{x_n\}$ and $\{R_{\lambda_n f}x_n\}$ 
 are $\Delta$-convergent to an element $x_{\infty}$ of $\Argmin_X f$. 
 \end{enumerate} 
\end{theorem}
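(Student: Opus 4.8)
The plan is to follow the structure of the Hadamard-space proof (Theorem~\ref{thm:Mann-CAT0}) while replacing linear/metric estimates by their spherical counterparts via the $\CAT(1)$ inequalities~\eqref{eq:CAT1-ineq},~\eqref{eq:KS-ineq}, and the firmly nonexpansive-type estimate~\eqref{eq:resolvent-CAT1-qfirm}. The central computation is a Fej\'er-type monotonicity expressed through $\cos d(u, \cdot)$ rather than $d(u, \cdot)^2$. Fix $u\in\Argmin_X f$. Starting from~\eqref{eq:Mann-CAT1} and applying~\eqref{eq:CAT1-ineq} to the geodesic $x_{n+1}=\alpha_n x_n\oplus(1-\alpha_n)R_{\lambda_n f}x_n$, then using~\eqref{eq:resolvent-CAT1-qfirm} in the form $\cos d(u, R_{\lambda_n f}x_n)\ge \cos d(u, x_n)/\cos d(R_{\lambda_n f}x_n, x_n)\ge \cos d(u, x_n)$, I expect to derive
\begin{align*}
 \cos d(u, x_{n+1}) \ge \alpha_n \cos d(u, x_n) + (1-\alpha_n)\cos d(u, R_{\lambda_n f}x_n) \ge \cos d(u, x_n),
\end{align*}
so $\{\cos d(u, x_n)\}$ is nondecreasing and bounded above by $1$, hence convergent; since $u$ is admissible this forces $\{x_n\}$ and $\{R_{\lambda_n f}x_n\}$ to be spherically bounded, and the increment $\cos d(u, R_{\lambda_n f}x_n)-\cos d(u, x_n)\to 0$. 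This gives the ``only if'' direction of (i) together with the key quantitative vanishing needed later.

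For the ``if'' direction of (i), I would argue by contraposition: assuming $\{R_{\lambda_n f}x_n\}$ spherically bounded and $\sup_n d(R_{\lambda_n f}x_n, x_n)<\pi/2$, I want to produce a minimizer. The natural route is to feed the displayed subgradient inequality~\eqref{eq:resolvent-CAT1-sqfirm} into a weighted Ces\`aro average using Theorem~\ref{thm:maximizer}: with weights $\beta_n=(1-\alpha_n)\lambda_n$ (so that $\sum\beta_n=\infty$ by hypothesis) one builds the concave function $g$ of~\eqref{eq:g-def} for the sequence $\{R_{\lambda_n f}x_n\}$, takes its unique maximizer as a candidate minimizer, and uses~\eqref{eq:resolvent-CAT1-sqfirm} summed against the weights to show $f$ attains its infimum there. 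The bound $\sup_n d(R_{\lambda_n f}x_n, x_n)<\pi/2$ is exactly what keeps the factor $\tfrac{\pi}{2}(1/C_{\lambda_n, x_n}^2+1)$ in~\eqref{eq:resolvent-CAT1-sqfirm} uniformly bounded, so the telescoping of $\cos d(u,\cdot)$-type increments stays controlled.

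For (ii), with $\Argmin_X f$ nonempty and $\sup_n\alpha_n<1$, I would first upgrade the increment estimate from (i): the gap in the $\CAT(1)$ inequality~\eqref{eq:KS-ineq}, combined with $1-\alpha_n\ge 1-\sup_m\alpha_m>0$, forces $\cos d(u, R_{\lambda_n f}x_n)-\cos d(u, x_n)\to 0$, and then Lemma~\ref{lem:sqns} (applied with the spherical boundedness $\sup_n d(u, x_n)<\pi/2$ coming from convergence of $\cos d(u, x_n)$) yields $d(R_{\lambda_n f}x_n, x_n)\to 0$. Consequently $\{x_n\}$ and $\{R_{\lambda_n f}x_n\}$ have the same asymptotic behavior. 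Next I take any $q\in\omega_\Delta(\{x_n\})$, pass to a subsequence with $\AC(\{x_{n_i}\})=\{q\}$, and invoke Lemma~\ref{lem:demi}(i)—whose hypotheses $\inf_n\lambda_n>0$ and $d(R_{\lambda_n f}x_n, x_n)\to 0$ must be checked; here a subtlety arises because the hypothesis only gives $\sum(1-\alpha_n)\lambda_n=\infty$, not $\inf_n\lambda_n>0$, so along the chosen subsequence I must instead use the demiclosedness argument directly, showing every $\Delta$-cluster point lies in $\Argmin_X f$. Finally, since $\{\cos d(u, x_n)\}$ converges for every $u\in\omega_\Delta(\{x_n\})\subset\Argmin_X f$, equivalently $\{d(u, x_n)\}$ converges, Lemma~\ref{lem:KSY-conv} gives $\Delta$-convergence of $\{x_n\}$ to a single point $x_\infty\in\Argmin_X f$, and $d(R_{\lambda_n f}x_n, x_n)\to 0$ transfers this to $\{R_{\lambda_n f}x_n\}$.

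The main obstacle I anticipate is the demiclosedness step in (ii): Lemma~\ref{lem:demi}(i) is stated under $\inf_n\lambda_n>0$, which is \emph{not} assumed here, so I expect to need a more careful argument that recovers a minimizer from $\Delta$-cluster points using only $\sum(1-\alpha_n)\lambda_n=\infty$ and the vanishing $d(R_{\lambda_n f}x_n, x_n)\to 0$—likely by rerunning the weighted-average construction of Theorem~\ref{thm:maximizer} locally or by extracting a further subsequence along which $\lambda_{n_i}$ is bounded below. Controlling the $\CAT(1)$-geometry uniformly (keeping all relevant distances strictly below $\pi/2$ throughout the iteration) is the recurring technical burden that distinguishes this from the Hadamard case.
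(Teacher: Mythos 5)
Your ``only if'' direction of (i) (Fej\'er monotonicity of $\cos d(u,x_n)$ via \eqref{eq:CAT1-ineq} and \eqref{eq:resolvent-CAT1-qfirm}) and your derivation of $d(R_{\lambda_n f}x_n,x_n)\to 0$ in (ii) (via $\sup_n\alpha_n<1$ and Lemma~\ref{lem:sqns}) match the paper's argument in substance. However, there are two genuine gaps. First, in the ``if'' direction of (i) you propose to sum \eqref{eq:resolvent-CAT1-sqfirm} against the weights to show that the maximizer of $g$ minimizes $f$. That inequality is stated, and holds, only for $u\in\Argmin_X f$ --- exactly the object you are trying to produce --- so the argument is circular as written. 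The paper instead uses the two-point resolvent inequality \eqref{eq:resolvent-CAT1-firm}, which is valid for \emph{arbitrary} points and parameters: applying it to $R_{\lambda_k f}x_k$ and $R_f p$ (i.e.\ $\mu=1$, $y=p$), where $p$ is the unique maximizer of $g$ from Theorem~\ref{thm:maximizer} built with the weights $\beta_n=(1-\alpha_n)\lambda_n C_{\lambda_n,x_n}^2/(1+C_{\lambda_n,x_n}^2)$, and telescoping with \eqref{eq:CAT1-ineq} yields $g(R_f p)\geq g(p)$, hence $R_f p=p$ and $p\in\Argmin_X f$ by \eqref{eq:resolvent-CAT1-fix}; the hypothesis $\sup_n d(R_{\lambda_n f}x_n,x_n)<\pi/2$ is what guarantees $\sum_n\beta_n=\infty$. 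You would need to switch to this (or an equivalent non-circular) inequality for this direction to close.

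Second, in (ii) you correctly identify that Lemma~\ref{lem:demi}(i) is unavailable because $\inf_n\lambda_n>0$ is not assumed, but you leave the step unresolved: the fallback of extracting a subsequence along which $\lambda_{n_i}$ is bounded below fails in general (take $\lambda_n\to 0$ with $\sum_n(1-\alpha_n)\lambda_n=\infty$), and ``rerunning the weighted-average construction locally'' is not worked out. The paper avoids demiclosedness entirely. Writing $z_n=R_{\lambda_n f}x_n$: since $\Argmin_X f\neq\emptyset$ in (ii), inequality \eqref{eq:resolvent-CAT1-sqfirm} is now legitimately available, and combining it with the Fej\'er estimate gives $\sum_{n}(1-\alpha_n)\lambda_n\bigl(f(z_n)-f(u)\bigr)<\infty$, hence $\liminf_n\bigl(f(z_n)-f(u)\bigr)=0$ by the divergence hypothesis; the monotonicity $f(x_{n+1})\leq\alpha_nf(x_n)+(1-\alpha_n)f(z_n)\leq f(x_n)$ and a subsequence argument on $\{\alpha_n\}$ then give $f(x_n)\to\inf f(X)$, and $\Delta$-lower semicontinuity of $f$ (Lemma~\ref{lem:KK-Delta-lsc}) places every $\Delta$-cluster point of $\{x_n\}$ in $\Argmin_X f$, after which Lemma~\ref{lem:KSY-conv} applies as you indicate. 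Supplying this function-value argument is necessary; without it the proof of (ii) does not go through under the stated hypotheses.
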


\begin{proof}
 Let $C_{\eta, z}$ be the real number given by~\eqref{eq:C-def} 
 for all $\eta>0$ and $z\in X$ 
 and let $\{z_n\}$ be the sequence in $X$ 
 given by $z_n=R_{\lambda_n f}x_n$ for all $n\in \N$. 

 We first show the if part of~(i). 
 Suppose that $\{z_n\}$ is spherically bounded and 
 $\sup_{n} d(z_n, x_n)<\pi/2$ 
 and let $\{\beta_n\}$ and $\{\sigma_n\}$ be the real sequences 
 given by 
 \begin{align*}
 \beta_n = 
 \frac{(1-\alpha_n)\lambda_n C_{\lambda_n, x_n}^2}
 {1+C_{\lambda_n, x_n}^2} 
 \quad \textrm{and} \quad 
 \sigma_n = \sum_{l=1}^{n} \beta_l
 \end{align*}
 for all $n\in \N$. 
 Since $\alpha_n<1$ and $\lambda_n>0$ for all $n\in \N$ 
 and $X$ is admissible,  
 we know that $\{\beta_n\}$ is a sequence of positive real numbers.  
 Since $\sup_n d(z_n, x_n)<\pi/2$, we also know that 
 \begin{align*}
  0 < \cos \sk{\sup_{n} d(z_n, x_n)} = \inf_{n} \cos d(z_n, x_n) = \inf_{n}
  C_{\lambda_n, x_n}. 
 \end{align*}
 Thus, noting that 
 \begin{align*}
  \beta_n \geq \frac{(1-\alpha_n)\lambda_n \sk{\inf_mC_{\lambda_m, x_m}}^2}{2} 
 \end{align*} 
 and $\sum_{n=1}^{\infty}(1-\alpha_n)\lambda_n=\infty$, 
 we obtain $\sum_{n=1}^{\infty}\beta_n=\infty$. 
 According to Theorem~\ref{thm:maximizer}, 
 the real function $g$ on $X$ defined by~\eqref{eq:g-def} 
 for all $y\in X$ has a unique maximizer $p\in X$.  
 It then follows from~\eqref{eq:resolvent-CAT1-firm} that 
\begin{align*}
 \begin{split}
 &\frac{\lambda_k C_{\lambda_k, x_k}^2}{1+C_{\lambda_k, x_k}^2}
\cos d(R_{\lambda_k f}x_k, R_{f}p) \\
 &\geq 
  \frac{\lambda_k C_{\lambda_k, x_k}^2}{1+C_{\lambda_k, x_k}^2}
\cos d(R_{\lambda_k f}x_k, p) 
+ \frac{C_{1,p}^2}{1+C_{1,p}^2} 
 \bigl(\cos d(R_{f}p, x_k) - \cos d(R_{f}p, R_{\lambda_k f}x_k)\bigr)
 \end{split}
\end{align*} 
 and hence 
\begin{align}
 \begin{split}\label{eq:thm:Mann-CAT1-a}
 &\beta_k \cos d(z_k, R_{f}p) \\
 &\geq 
  \beta_k \cos d(z_k, p) + \frac{(1-\alpha_k) C_{1,p}^2}{1+C_{1,p}^2} 
 \bigl(\cos d(R_{f}p, x_k) -\cos d(R_{f}p, z_k)\bigr)
 \end{split}
\end{align} 
 for all $k\in \N$. 
 On the other hand, it follows from~\eqref{eq:CAT1-ineq} 
 and the definition of $\{x_n\}$ that 
 \begin{align}
  \begin{split}\label{eq:thm:Mann-CAT1-b}
  \cos d(R_{f}p, x_{k+1}) \geq \alpha_k \cos d(R_{f}p, x_k) 
  + (1-\alpha_k) \cos d(R_{f}p, z_k)   
  \end{split}
 \end{align}
 and hence, by~\eqref{eq:thm:Mann-CAT1-a} and~\eqref{eq:thm:Mann-CAT1-b}, 
 we have 
\begin{align}
 \begin{split}\label{eq:thm:Mann-CAT1-c}
 &\beta_k \cos d(z_k, R_{f}p) \\
 &\geq 
  \beta_k \cos d(z_k, p) + \frac{C_{1,p}^2}{1+C_{1,p}^2} 
 \bigl(\cos d(R_{f}p, x_k) -\cos d(R_{f}p, x_{k+1})\bigr)
 \end{split}
\end{align} 
 for all $k\in \N$. 
 Summing up~\eqref{eq:thm:Mann-CAT1-c} with respect to 
 $k\in \{1,2,\dots, n\}$, we obtain 
 \begin{align}
 \begin{split}\label{eq:thm:Mann-CAT1-d}
 &\frac{1}{\sigma_n}\sum_{k=1}^{n}\beta_k \cos d(z_k, R_{f}p) \\
 &\geq 
  \frac{1}{\sigma_n}\sum_{k=1}^{n}\beta_k \cos d(z_k, p) 
+ \frac{C_{1,p}^2}{1+C_{1,p}^2} 
 \cdot \frac{1}{\sigma_n} 
 \bigl(\cos d(R_{f}p, x_1) -\cos d(R_{f}p, x_{n+1})\bigr)
 \end{split}
\end{align} 
 for all $n\in \N$. 
 Since $\lim_{n}\sigma_n=\infty$, 
 it follows from~\eqref{eq:thm:Mann-CAT1-d} that 
 $g(R_{f}p) \geq g(p)$.    
 Since $p$ is the unique maximizer of $g$, 
 we obtain $R_{f}p=p$. 
 Consequently, it follows from~\eqref{eq:resolvent-CAT1-fix} 
 that $p\in \Argmin_X f$. 
 Therefore, the set $\Argmin_X f$ is nonempty. 

 We next show the only if part of~(i). 
 Suppose that $\Argmin_X f$ is nonempty and 
 let $u\in \Argmin_X f$ be given. 
 Then it follows from~\eqref{eq:resolvent-CAT1-qfirm} that 
 \begin{align}
  \begin{split}\label{eq:thm:Mann-CAT1-temp1}
   \min\bigl\{\cos d(u, z_n), \cos d(z_n, x_n)\bigr\}
   &\geq \cos d(u, z_n)\cos d(z_n, x_n) \\
   &\geq \cos d(u, x_n) 
  \end{split}
 \end{align}
 and hence 
 \begin{align}\label{eq:thm:Mann-CAT1-e}
    \max\bigl\{d(u, z_n), d(z_n,x_n)\bigr\} \leq d(u, x_n) 
 \end{align}
 for all $n\in \N$. 
 By~\eqref{eq:CAT1-ineq} and~\eqref{eq:thm:Mann-CAT1-temp1}, we have 
 \begin{align}\label{eq:thm:Mann-CAT1-f}
   \cos d(u, x_{n+1}) 
   \geq \alpha_n \cos d(u, x_{n}) + (1-\alpha_n) \cos d(u, z_{n})
   \geq \cos d(u, x_{n}). 
 \end{align}
 It then follows from~\eqref{eq:thm:Mann-CAT1-f} 
 and the admissibility of $X$ that 
 \begin{align}\label{eq:thm:Mann-CAT1-g}
  d(u, x_{n+1}) \leq d(u, x_{n}) \leq d(u, x_1) < \frac{\pi}{2}
 \end{align}
 for all $n\in \N$. 
 Thus it follows from~\eqref{eq:thm:Mann-CAT1-e} 
 and~\eqref{eq:thm:Mann-CAT1-g} that 
 \begin{align*}
  \limsup_{n\to \infty} d(u, z_{n}) \leq 
  \lim_{n\to \infty} d(u, x_{n}) \leq d(u, x_1) < \frac{\pi}{2}. 
 \end{align*}
 This implies the spherical boundedness of $\{x_n\}$ and $\{z_n\}$. 
 It also follows from~\eqref{eq:thm:Mann-CAT1-e} 
 and~\eqref{eq:thm:Mann-CAT1-g} 
 that $\sup_{n} d(z_{n}, x_{n}) < \pi/2$. 

 We finally show~(ii). 
 Suppose that $\Argmin_X f$ is nonempty and $\sup_n\alpha_n<1$. 
 Then we know
 that~\eqref{eq:thm:Mann-CAT1-temp1},~\eqref{eq:thm:Mann-CAT1-e},~\eqref{eq:thm:Mann-CAT1-f},  
 and~\eqref{eq:thm:Mann-CAT1-g} hold 
 and that both $\{x_n\}$ and $\{z_n\}$ are 
 spherically bounded. 
 Let $u\in \Argmin_X f$ be given. 
 It follows from~\eqref{eq:thm:Mann-CAT1-g} that 
 $\{d(u, x_{n})\}$ tends to some $\beta\in \clopitvl{0}{\pi/2}$.  
 By~\eqref{eq:CAT1-ineq} and~\eqref{eq:thm:Mann-CAT1-temp1}, 
 we have 
 \begin{align*}
  \begin{split}
   &\cos d(u, x_{n+1}) \\
   &\geq \alpha_n \cos d(u, x_{n}) + (1-\alpha_n) \cos d(u, z_{n}) \\
   &\geq \alpha_n \cos d(u, x_{n}) 
   + (1-\alpha_n) \cdot \frac{\cos d(u, x_{n})}{\cos d(z_{n}, x_{n})} \\
  &= \cos d(u, x_{n}) 
   + (1-\alpha_n) \cos d(u, x_{n}) 
   \left(\frac{1}{\cos d(z_{n},x_{n})} -1 \right)  
  \end{split}
 \end{align*}
 and hence 
 \begin{align}\label{eq:thm:Mann-CAT1-h}
  0 \leq (1-\alpha_n) \left(\frac{1}{\cos d(z_{n},x_{n})} -1 \right)
      \leq \frac{\cos d(u, x_{n+1})}{\cos d(u, x_{n})} - 1 \to 
  \frac{\cos \beta}{\cos \beta} - 1 = 0
 \end{align}
 as $n\to \infty$. 
 Since $\sup_n \alpha_n <1$, 
 it follows from~\eqref{eq:thm:Mann-CAT1-h} that 
 \begin{align}\label{eq:thm:Mann-CAT1-i}
  \lim_{n\to \infty} d(z_{n}, x_{n}) = 0. 
 \end{align} 

 On the other hand, 
 it follows from~\eqref{eq:resolvent-CAT1-sqfirm} 
 and~\eqref{eq:thm:Mann-CAT1-i} that 
 there exists a positive real number $K$ such that  
 \begin{align}\label{eq:thm:Mann-CAT1-j}
  \lambda_n \bigl(f(z_n) -f(u)\bigr) 
  \leq \frac{K\pi}{2} 
  \bigl(\cos d(u, z_n) - \cos d(u, x_n)\bigr) 
 \end{align} 
 for all $n\in \N$. 
 It then follows from~\eqref{eq:thm:Mann-CAT1-f} 
 and~\eqref{eq:thm:Mann-CAT1-j} that 
 \begin{align*}
  (1-\alpha_n)\lambda_n \bigl(f(z_n) -f(u)\bigr) 
  \leq \frac{K\pi}{2} 
  \bigl(\cos d(u, x_{n+1}) - \cos d(u, x_n)\bigr) 
 \end{align*} 
 and hence 
 \begin{align}\label{eq:thm:Mann-CAT1-k}
  \sum_{n=1}^{\infty} (1-\alpha_n)\lambda_n \bigl(f(z_n) -f(u)\bigr)
  \leq \frac{K\pi}{2} \bigl(\cos \beta - \cos d(u, x_1)\bigr) 
  <\infty. 
 \end{align}
 Since $\sum_{n=1}^{\infty}(1-\alpha_n)\lambda_n=\infty$, 
 it follows from~\eqref{eq:thm:Mann-CAT1-k} that 
 \begin{align}\label{eq:thm:Mann-CAT1-l}
  \liminf_{n\to \infty} \bigl(f(z_n) -f(u)\bigr) =0. 
 \end{align}
 By the definitions of $\{x_n\}$ and $\{z_n\}$ 
 and the convexity of $f$, we also have 
 \begin{align*}
  -\infty < \inf f(X)\leq f(z_n) 
 \leq f(z_n) + 
 \frac{1}{\lambda_n}\tan d(z_n, x_n) \sin d(z_n, x_n) 
 \leq f(x_n)   
 \end{align*}
 and 
 \begin{align*}
  -\infty < \inf f(X) \leq f(x_{n+1}) 
  \leq \alpha_n f(x_n) + (1-\alpha_n) f(z_n) 
  \leq f(x_n)
 \end{align*}
 for all $n\in \N$. 
 Thus $\{f(x_n)\}$ tends to some $\gamma\in \R$ and 
 $\{f(z_n)\}$ is bounded. 
 Let $\{n_i\}$ be any increasing sequence in $\N$. 
 Since $\sup_n \alpha_n<1$, 
 we have a subsequence $\{n_{i_j}\}$ of $\{n_i\}$ 
 such that $\{\alpha_{n_{i_j}}\}$ tends to 
 some $\delta\in \clopitvl{0}{1}$. 
 Then, letting $j\to \infty$ in 
 \begin{align*}
  \frac{1}{1-\alpha_{n_{i_j}}} \Bigl(f\bigl(x_{n_{i_j}+1}\bigr) 
  - \alpha_{n_{i_j}} f\bigl(x_{n_{i_j}}\bigr) \Bigr) 
  \leq f\bigl(z_{n_{i_j}}\bigr) \leq f\bigl(x_{n_{i_j}}\bigr), 
 \end{align*}
 we obtain $f\bigl(z_{n_{i_j}}\bigr)\to \gamma$. 
 Thus $\{f(z_n)\}$ also tends to $\gamma$. 
 Consequently, it follows from~\eqref{eq:thm:Mann-CAT1-l} 
 that  
 \begin{align}\label{eq:thm:Mann-CAT1-m} 
  \lim_{n\to \infty} f(x_n) = \gamma = f(u) = \inf f(X). 
 \end{align}

 Let $z$ be any element of $\omega_{\Delta}\bigl(\{x_n\}\bigr)$. 
 Then we have a subsequence $\{x_{m_i}\}$ of $\{x_n\}$ 
 which is $\Delta$-convergent to $z$. 
 Since $f$ is $\Delta$-lower semicontinuous 
 by Lemma~\ref{lem:KK-Delta-lsc}, 
 it follows from~\eqref{eq:thm:Mann-CAT1-m} that 
 \begin{align*}
  f(z) \leq \liminf_{i\to \infty} f(x_{m_i}) 
  = \lim_{n\to \infty}f(x_n) = \inf f(X) 
 \end{align*}
 and hence $z \in \Argmin_X f$. 
 Thus $\omega_{\Delta}\bigl(\{x_n\}\bigr)$ 
 is a subset of $\Argmin_X f$. 
 It then follows from~\eqref{eq:thm:Mann-CAT1-g} 
 that $\{d(z, x_n)\}$ is convergent for each 
 $z\in \omega_{\Delta}\bigl(\{x_n\}\bigr)$. 
 Thus, Lemma~\ref{lem:KSY-conv} ensures that 
 $\{x_n\}$ is $\Delta$-convergent to some $x_{\infty}\in X$. 
 Since 
 \begin{align*}
  \{x_{\infty}\} = \omega_{\Delta} \bigl(\{x_n\}\bigr) \subset
  \Argmin_{X} f, 
 \end{align*}
 we know that $x_{\infty}$ belongs to $\Argmin_X f$. 
 It then follows from~\eqref{eq:thm:Mann-CAT1-i} that 
 \begin{align*}
  \AC\bigl(\{z_{l_i}\}\bigr) = \AC\bigl(\{x_{l_i}\}\bigr) = \{x_{\infty}\}
 \end{align*} 
 for each increasing sequence $\{l_i\}$ in $\N$. 
 Consequently, we conclude that both $\{x_n\}$ and $\{z_n\}$ 
 are $\Delta$-convergent to an element $x_{\infty}$ 
 of $\Argmin_X f$. 
\end{proof}

As a direct consequence of Theorem~\ref{thm:Mann-CAT1}, 
we obtain the following corollary. 

\begin{corollary}\label{cor:Mann-CAT1-S_H}
 Let $(S_H, \rho_{S_H})$ be a Hilbert sphere, 
 $X$ an admissible closed convex subset of $S_H$, 
 $f$ an element of $\mathit{\Gamma}_0(X)$, 
 and $\{x_n\}$ a sequence defined 
 by $x_1\in X$ and~\eqref{eq:Mann-CAT1}, where 
 $\{\alpha_n\}$ is a sequence in $\clopitvl{0}{1}$ 
 and $\{\lambda_n\}$ is a sequence of positive real numbers 
 such that $\sum_{n=1}^{\infty}(1-\alpha_n) \lambda_n=\infty$. 
 Then $\Argmin_X f$ is nonempty 
 if and only if 
 $\{R_{\lambda_n f}x_n\}$ is spherically bounded and 
 $\sup_{n} \rho_{S_H}(R_{\lambda_n f}x_n, x_n)<\pi/2$. 
 Further, if $\Argmin_X f$ is nonempty and 
 $\sup_n\alpha_n<1$, 
  then both $\{x_n\}$ and $\{R_{\lambda_n f}x_n\}$ 
 are $\Delta$-convergent to an element $x_{\infty}$ of $\Argmin_X f$. 
\end{corollary}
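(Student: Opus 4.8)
The plan is to recognize that this corollary is an immediate specialization of Theorem~\ref{thm:Mann-CAT1}, obtained by verifying that the ambient space $X$ itself qualifies as an admissible complete $\CAT(1)$ space. Once that is established, every hypothesis and every conclusion of the corollary matches those of Theorem~\ref{thm:Mann-CAT1} verbatim, with the restriction of the spherical metric $\rho_{S_H}$ playing the role of the metric $d$.

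First I would check that $X$, equipped with the restriction of $\rho_{S_H}$, is a complete $\CAT(1)$ space. Since the Hilbert sphere $(S_H, \rho_{S_H})$ is a complete $\CAT(1)$ space, as recorded in Section~\ref{sec:pre}, and $X$ is closed in $S_H$, the space $X$ is complete. Because $X$ is convex, the unique geodesic path joining any two points $x, y \in X$ with $\rho_{S_H}(x,y) < \pi$ lies entirely in $X$; hence $X$ is $\pi$-geodesic and every geodesic triangle in $X$ is simultaneously a geodesic triangle in $S_H$. Consequently the $\CAT(1)$ comparison inequality is inherited by $X$ from $S_H$, so $X$ is a $\CAT(1)$ space. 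The admissibility hypothesis on $X$ is precisely the condition $\rho_{S_H}(w, w') < \pi/2$ for all $w, w' \in X$, so $X$ is admissible in the required sense.

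With $X$ confirmed to be an admissible complete $\CAT(1)$ space, I would observe that the resolvents $R_{\lambda_n f}$ appearing in~\eqref{eq:Mann-CAT1} are defined by the intrinsic minimization~\eqref{eq:resolvent-CAT1-def2} over $X$ using $\rho_{S_H}$, so they coincide with the resolvents in the sense of Theorem~\ref{thm:Mann-CAT1}. The notion of spherical boundedness and the quantity $\sup_n \rho_{S_H}(R_{\lambda_n f}x_n, x_n)$ are likewise intrinsic to $X$ and transfer without change. Applying Theorem~\ref{thm:Mann-CAT1} to $X$, $f$, and $\{x_n\}$ therefore yields the equivalence asserted in the first part, together with the $\Delta$-convergence of both $\{x_n\}$ and $\{R_{\lambda_n f}x_n\}$ to a common element of $\Argmin_X f$ under the additional hypothesis $\sup_n \alpha_n < 1$.

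I do not anticipate a genuine obstacle here. The only point requiring any care is the verification that a closed convex subset of a complete $\CAT(1)$ space is again an admissible complete $\CAT(1)$ space, and this is standard: convexity guarantees that geodesics between points of $X$ remain in $X$, completeness is inherited from closedness, and the comparison inequality is inherited from the ambient sphere. Everything else is a direct invocation of Theorem~\ref{thm:Mann-CAT1}.
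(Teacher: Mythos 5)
Your proposal is correct and matches the paper's treatment: the paper presents this corollary as a direct consequence of Theorem~\ref{thm:Mann-CAT1}, exactly as you argue, with the only substantive point being that an admissible closed convex subset of a Hilbert sphere is itself an admissible complete $\CAT(1)$ space. Your verification of that point (completeness from closedness, the geodesic and comparison properties from convexity in the ambient sphere) is sound and is precisely what the paper leaves implicit.
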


\section{Convergent proximal-type algorithm}
\label{sec:Halpern}

In this section, 
using some techniques from~\cite{MR3020188}*{Theorem~3.2}, 
we first obtain the following theorem. 
This is the second one of our three main results in this paper. 

\begin{theorem}\label{thm:Halpern-CAT1}
 Let $X$ be an admissible complete $\CAT(1)$ space, 
 $f$ an element of $\mathit{\Gamma}_0(X)$, 
 $R_{\eta f}$ the resolvent of $\eta f$ 
 for all $\eta >0$, 
 $v$ an element of $X$,  
 and $\{y_n\}$ a sequence defined by $y_1\in X$ 
 and~\eqref{eq:Halpern-CAT1}, 
 where $\{\alpha_n\}$ is a sequence in $[0,1]$ and 
 $\{\lambda_n\}$ is a sequence of positive real numbers 
 such that $\lim_{n} \lambda_n=\infty$.  
 Then the following hold. 
 \begin{enumerate}
  \item[(i)] The set $\Argmin_X f$ is nonempty 
 if and only if 
 $\{R_{\lambda_n f}y_n\}$ is spherically bounded and 
 $\sup_{n} d(R_{\lambda_n f}y_n, y_n)<\pi/2$; 
  \item[(ii)] if $\Argmin_X f$ is nonempty, $\lim_{n} \alpha_n = 0$, 
 and $\sum_{n=1}^{\infty}\alpha_n^2=\infty$, 
  then both $\{y_n\}$ and $\{R_{\lambda_n f}y_n\}$ 
 are convergent to $Pv$, 
 where $P$ denotes the metric projection 
 of $X$ onto $\Argmin_X f$. 
 \end{enumerate}
\end{theorem}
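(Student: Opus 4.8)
My plan is to handle (i) via the demiclosedness principle of Lemma~\ref{lem:demi} and (ii) via the recursion lemma of Saejung and Yotkaew, Lemma~\ref{lem:KSY-seq}. Write $z_n=R_{\lambda_n f}y_n$ throughout. For the \emph{if} part of (i), since $\{z_n\}$ is spherically bounded, Lemma~\ref{lem:EFL-subseq} makes $\AC(\{z_n\})$ a singleton $\{p\}$; as $\lim_n\lambda_n=\infty$ and $\sup_n d(z_n,y_n)<\pi/2$, Lemma~\ref{lem:demi}(ii) gives $p\in\Argmin_X f$. For the \emph{only if} part, fix $u\in\Argmin_X f$. Inequality~\eqref{eq:resolvent-CAT1-qfirm} yields $\max\{d(u,z_n),d(z_n,y_n)\}\le d(u,y_n)$, and~\eqref{eq:CAT1-ineq} applied to $y_{n+1}=\alpha_n v\oplus(1-\alpha_n)z_n$ together with $\cos d(u,z_n)\ge\cos d(u,y_n)$ gives $\cos d(u,y_{n+1})\ge\alpha_n\cos d(u,v)+(1-\alpha_n)\cos d(u,y_n)$. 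A one-step induction then shows $\cos d(u,y_n)\ge\min\{\cos d(u,y_1),\cos d(u,v)\}>0$, so $\sup_n d(u,y_n)<\pi/2$, which forces the spherical boundedness of $\{z_n\}$ and $\sup_n d(z_n,y_n)<\pi/2$.

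The heart of the matter is (ii). Set $p=Pv$ and $s_n=1-\cos d(y_n,p)\ge 0$. Applying Lemma~\ref{lem:KS-ineq-Halpern} to $(x_1,x_2,x_3)=(v,z_n,p)$ with $\alpha=\alpha_n$ and using $\cos d(z_n,p)\ge\cos d(y_n,p)$ from~\eqref{eq:resolvent-CAT1-qfirm}, I would derive
\[
 s_{n+1}\le(1-\beta_n)s_n-(1-\beta_n)E_n+\beta_n t_n,
\]
where $\beta_n$ is the coefficient in Lemma~\ref{lem:KS-ineq-Halpern}, $E_n=\cos d(z_n,p)-\cos d(y_n,p)\ge 0$, and $t_n=1-\cos d(v,p)/D_n$ with $D_n=\sin d(v,z_n)\tan(\tfrac{\alpha_n}{2}d(v,z_n))+\cos d(v,z_n)$. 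Discarding the nonpositive $E_n$-term puts this in the form required by Lemma~\ref{lem:KSY-seq}, with weights $\beta_n$. The hypothesis $\sum_n\alpha_n^2=\infty$ enters precisely at the verification that $\sum_n\beta_n=\infty$: writing $\beta_n=(1-\cos(\alpha_n d(v,z_n)))+\cot d(v,z_n)\,\sin(\alpha_n d(v,z_n))$, a short case split---$d(v,z_n)$ bounded away from $0$, where the first summand gives $\beta_n\gtrsim\alpha_n^2$, versus $d(v,z_n)$ small, where the second gives $\beta_n\gtrsim\alpha_n\ge\alpha_n^2$---produces a universal $c>0$ with $\beta_n\ge c\alpha_n^2$.

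To check the liminf hypothesis of Lemma~\ref{lem:KSY-seq}, take an increasing $\{n_i\}$ with $\limsup_i(s_{n_i}-s_{n_i+1})\le 0$. Reinserting the recursion and noting that $\beta_{n_i}t_{n_i}\le\beta_{n_i}\to 0$ forces $E_{n_i}\to 0$; since $\cos d(z_n,p)$ is bounded below and $E_n\ge\cos d(z_n,p)(1-\cos d(z_n,y_n))$, this gives $d(z_{n_i},y_{n_i})\to 0$. Each $\Delta$-convergent subsequence of $\{z_{n_i}\}$ (available by Lemma~\ref{lem:EFL-subseq}) then has its limit in $\Argmin_X f$ by Lemma~\ref{lem:demi}(ii), so $\omega_\Delta(\{z_{n_i}\})\subset\Argmin_X f$, and Lemma~\ref{lem:limsup-cos} yields $\cos d(v,p)\ge\limsup_i\cos d(v,z_{n_i})$. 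Because $\alpha_{n_i}\to 0$ makes $D_{n_i}-\cos d(v,z_{n_i})\to 0$, we obtain $\limsup_i D_{n_i}\le\cos d(v,p)$ and hence $\limsup_i t_{n_i}\le 0$. Lemma~\ref{lem:KSY-seq} now gives $s_n\to 0$, i.e.\ $y_n\to Pv$; feeding this into~\eqref{eq:resolvent-CAT1-qfirm} shows $\cos d(z_n,y_n)\cos d(p,z_n)\ge\cos d(p,y_n)\to 1$, so both factors tend to $1$ and $z_n\to Pv$ as well.

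I expect the verification of the liminf condition to be the main obstacle, since it must chain together the decrease estimate, the demiclosedness input of Lemma~\ref{lem:demi}(ii), and the projection inequality of Lemma~\ref{lem:limsup-cos}; the accompanying estimate $\beta_n\ge c\alpha_n^2$---which explains why $\sum_n\alpha_n^2=\infty$, rather than $\sum_n\alpha_n=\infty$, is the correct hypothesis in a space that is only admissible---is the other delicate point.
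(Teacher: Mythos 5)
Your part (i) coincides with the paper's argument in both directions. For part (ii), however, you have routed everything through Lemma~\ref{lem:KSY-seq} (Saejung--Yotkaew), which is the tool the paper reserves for Theorem~\ref{thm:Halpern-CAT1-another}, where only $\inf_n\lambda_n>0$ is available. In the present theorem the paper exploits $\lim_n\lambda_n=\infty$ to run a simpler scheme: every $\Delta$-cluster point of the \emph{full} sequence $\{z_n\}$ already lies in $\Argmin_X f$ by Lemma~\ref{lem:demi}(ii) (that clause needs only $\sup_n d(z_n,y_n)<\pi/2$, not $d(z_n,y_n)\to 0$), so Lemma~\ref{lem:limsup-cos} gives $\limsup_n t_n\le 0$ outright and the elementary Lemma~\ref{lem:AKTT-seq} finishes. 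Your detour through $E_{n_i}\to 0$ and $d(z_{n_i},y_{n_i})\to 0$ is therefore unnecessary here; it is exactly the extra work required in the companion theorem.

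There is one genuine gap in your route. Lemma~\ref{lem:KSY-seq} requires the weight sequence to lie in $\opclitvl{0}{1}$, i.e.\ to be strictly positive, whereas in this theorem $\{\alpha_n\}$ is only assumed to lie in $[0,1]$, and $\beta_n$ vanishes precisely when $\alpha_n=0$ (for $z_n\neq v$ one has $\beta_n=1-\sin\bigl((1-\alpha_n)d(z_n,v)\bigr)/\sin d(z_n,v)=0$ iff $\alpha_n=0$, and $\beta_n=\alpha_n$ otherwise). So your application of Lemma~\ref{lem:KSY-seq} with weights $\beta_n$ is not licensed by its hypotheses, and the positivity is not cosmetic: the Maing\'e-type proof of that lemma divides by the weight at indices where $s_{n_i}<s_{n_i+1}$. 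The argument is repairable --- when $\beta_n=0$ your recursion gives $s_{n+1}\le s_n-E_n\le s_n$, so the indices of strict increase automatically carry $\beta_{n_i}>0$ --- but this must be said; the cleaner fix is to switch to Lemma~\ref{lem:AKTT-seq}, which tolerates weights in $[0,1]$, as the paper does. Everything else is sound: your two-case lower bound $\beta_n\ge c\alpha_n^2$ is a valid alternative to the paper's monotonicity argument yielding $\beta_n\ge\pi^2\alpha_n^2/16$, the use of Lemma~\ref{lem:limsup-cos} is as in the paper, and the final passage from $y_n\to Pv$ to $z_n\to Pv$ via~\eqref{eq:resolvent-CAT1-qfirm} is correct (the paper gets it even more directly from $d(Pv,z_n)\le d(Pv,y_n)$).
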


\begin{proof}
 Let $\{z_n\}$ be the sequence in $X$ given by 
 $z_n = R_{\lambda_n f}y_n$ for all $n\in \N$. 

 We first show the if part of~(i). 
 Suppose that $\{z_n\}$ is spherically bounded and 
 $\sup_{n} d(z_n, y_n)<\pi/2$. 
 Then Lemma~\ref{lem:EFL-subseq} implies that 
 there exists $p\in X$ such that $\AC\bigl(\{z_n\}\bigr)=\{p\}$. 
 Since 
 \begin{align*}
  \lim_{n\to \infty}\lambda_n=\infty 
 \quad \textrm{and} \quad 
  \sup_{n} d(z_n, y_n)<\frac{\pi}{2}, 
 \end{align*}
 it follows from~(ii) of Lemma~\ref{lem:demi} 
 that $p\in \Argmin_X f$. 
 Thus $\Argmin_X f$ is nonempty. 

 We next show the only if part of~(i). 
 Suppose that $\Argmin_X f$ is nonempty 
 and let $P$ be the metric projection of $X$ onto $\Argmin_X f$.  
 It follows from~\eqref{eq:resolvent-CAT1-qfirm} that 
 \begin{align}\label{eq:thm:Halpern-CAT1-a-b}
    \max\bigl\{d(Pv, z_n), d(z_n, y_n)\bigr\} \leq d(Pv, y_n) 
 \end{align}
 for all $n\in \N$.  
 It also follows from~\eqref{eq:CAT1-ineq} 
 and~\eqref{eq:resolvent-CAT1-qfirm} that 
 \begin{align*}
   \cos d(Pv, y_{n+1}) 
   &\geq \alpha_n \cos d(Pv, v) + (1-\alpha_n) \cos d(Pv, y_{n}). 
 \end{align*}
 This implies that 
 \begin{align*}
  \cos d(Pv, y_{n}) \geq \min\bigl\{\cos d(Pv, v), \cos d(Pv, y_1)\bigr\}
 \end{align*}
 for all $n\in \N$ and hence 
 \begin{align}\label{eq:thm:Halpern-CAT1-c}
  d(Pv, y_{n}) \leq \max\bigl\{d(Pv, v), d(Pv, y_1)\bigr\} < \frac{\pi}{2}
 \end{align}
 for all $n\in \N$, where the last inequality 
 follows from the admissibility of $X$. 
 Then, by~\eqref{eq:thm:Halpern-CAT1-a-b} 
 and~\eqref{eq:thm:Halpern-CAT1-c}, 
 we see that both $\{y_n\}$ and $\{z_n\}$ 
 are spherically bounded and 
 \begin{align}\label{eq:thm:Halpern-CAT1-d}
  \sup_{n} d(z_n, y_n) < \frac{\pi}{2}. 
 \end{align}

 We finally show~(ii). 
 Suppose that $\Argmin_X f$ is nonempty, $\lim_n \alpha_n = 0$, 
 and $\sum_{n=1}^{\infty}\alpha_n^2=\infty$. 
 Then we know 
 that~\eqref{eq:thm:Halpern-CAT1-a-b},~\eqref{eq:thm:Halpern-CAT1-c},
 and~\eqref{eq:thm:Halpern-CAT1-d} 
 hold and that both $\{y_n\}$ and $\{z_n\}$ are spherically bounded. 
 By~\eqref{eq:KS-ineq-Halpern} 
 and~\eqref{eq:thm:Halpern-CAT1-a-b}, 
 we have 
 \begin{align}
 \begin{split}\label{eq:thm:Halpern-CAT1-e}
 &\cos d\bigl(Pv, y_{n+1}\bigr) \\
 &\geq (1-\beta_n) \cos d(Pv, y_n) 
 + \beta_n \cdot 
 \frac{\cos d(Pv, v)}{\sin d(z_n, v) 
 \tan \bigl(\frac{\alpha_n}{2}d(z_n, v)\bigr) 
 + \cos d(z_n, v)}, 
 \end{split}
\end{align}
where 
\begin{align}\label{eq:thm:Halpern-CAT1-beta_n}
 \beta_n
  =
 \begin{cases}
   1 - \dfrac{\sin \bigl((1-\alpha_n) d(z_n, v)\bigr)}{\sin d(z_n, v)} 
   &(z_n \neq v); \\
   \alpha_n & (z_n = v)
  \end{cases}
 \end{align}
 for all $n\in \N$. 
 Note that if $z_n\neq v$, then it follows from 
 \begin{align*}
  \begin{split}
  \sin \bigl((1-\alpha_n)d(z_n, v)\bigr) 
  \geq \alpha_n \sin 0 +(1-\alpha_n) \sin d(z_n, v) 
  = (1-\alpha_n) \sin d(z_n, v) 
  \end{split}  
 \end{align*}
 that $\alpha_n\geq \beta_n$. 
 Hence we have 
 \begin{align}\label{eq:thm:Halpern-CAT1-alpha-beta-a}
  \alpha_n\geq \beta_n
 \end{align}
 for all $n\in \N$. 
 Letting 
 \begin{align}\label{eq:thm:Halpern-CAT1-s_n}
  s_n = 1 - \cos d(Pv, y_n) 
 \end{align}
 and 
 \begin{align}\label{eq:thm:Halpern-CAT1-t_n}
 t_n = 1 - 
   \frac{\cos d(Pv, v)}{\sin d(z_n, v) 
 \tan \bigl(\frac{\alpha_n}{2}d(z_n, v)\bigr) 
 + \cos d(z_n, v)}, 
 \end{align}
 we have from~\eqref{eq:thm:Halpern-CAT1-e} that 
 \begin{align}\label{eq:thm:Halpern-CAT1-f}
  s_{n+1} \leq (1-\beta_n) s_n + \beta_n t_n
 \end{align}
 for all $n\in \N$. 

 Let us show that $\sum_{n=1}^{\infty}\beta_n=\infty$. 
 Suppose that $z_n \neq v$. 
 If $\gamma \in [0,1]$ and 
 $\varphi (t) = \sin (\gamma t)/ \sin t$ 
 for all $t \in \opclitvl{0}{\pi/2}$, then we have 
 \begin{align*}
 \begin{split}
 \varphi'(t) 
 &=\frac{\cos (\gamma t) \cos t}{\sin ^2 t} 
 \bigl(\gamma \tan t - \tan (\gamma t)\bigr) \\
 &\geq \frac{\cos (\gamma t) \cos t}{\sin ^2 t} 
 \Bigl(\gamma \tan t - \bigl((1-\gamma )\tan 0 +\gamma\tan t\bigr)
 \Bigr) =0
 \end{split} 
 \end{align*}
 for all $t\in \openitvl{0}{\pi/2}$. 
 Thus the function 
 \begin{align*}
  t\mapsto \frac{\sin \bigl((1-\alpha_n)t\bigr)}{\sin t}
 \end{align*}
 is nondecreasing on $\opclitvl{0}{\pi/2}$. 
 Using this property and 
 the inequality $1-t^2/4 \geq \cos t$ on $[0,\pi/2]$, 
 we have 
 \begin{align*}
 \beta_n 
 \geq 1 - \sin \frac{(1-\alpha_n)\pi}{2} 
 =1-\cos \frac{\alpha_n \pi}{2} 
 \geq \frac{1}{4} \sk{\frac{\alpha_n \pi}{2}}^2 
 = \frac{\alpha_n^2 \pi^2}{16}. 
 \end{align*}
 If $z_n=v$, then 
 $\beta_n =\alpha_n \geq \alpha_n^2 \geq 16^{-1}\pi ^2\alpha_n^2$.  
 Hence the inequality 
 \begin{align}\label{eq:thm:Halpern-CAT1-alpha-beta-b}
  \beta_n \geq \frac{\pi ^2}{16}\alpha_n^2
 \end{align} 
 holds for all $n\in \N$. 
 It then follows from $\sum_{n=1}^{\infty}\alpha_n^2=\infty$ 
 that $\sum_{n=1}^{\infty}\beta_n=\infty$.  

 We next show that $\limsup_{n} t_n \leq 0$. 
 Since $\lim_{n}\alpha_n=0$, we have 
 \begin{align*}
  \lim_{n\to \infty} \sin d(z_n, v)\tan \frac{\alpha_n d(z_n, v)}{2} = 0. 
 \end{align*}
 If $\limsup_{n} \cos d(z_n, v) = 0$, then we have 
 \begin{align*}
  \limsup_{n\to \infty} t_n 
 = 1 - 
   \liminf_{n\to \infty}\frac{\cos d(Pv, v)}{\sin d(z_n, v) 
 \tan \bigl(\frac{\alpha_n}{2}d(z_n, v)\bigr) 
 + \cos d(z_n, v)} 
 = -\infty. 
 \end{align*}
If $\limsup_{n} \cos d(z_n, v) > 0$,  then 
it follows from Lemma~\ref{lem:limsup-liminf} that 
 \begin{align}
  \begin{split}\label{eq:thm:Halpern-CAT1-g}
 & \limsup_{n\to \infty} t_n \\
 &= 1 - 
   \liminf_{n\to \infty}\frac{\cos d(Pv, v)}{\sin d(z_n, v) 
 \tan \bigl(\frac{\alpha_n}{2}d(z_n, v)\bigr) 
 + \cos d(z_n, v)} \\   
 &= 1 - 
  \frac{\cos d(Pv, v)}{
 \limsup_{n\to \infty}\Bigl(\sin d(z_n, v) 
 \tan \bigl(\frac{\alpha_n}{2}d(z_n, v)\bigr) 
 + \cos d(z_n, v)\Bigr)} \\   
 &= 1 - 
  \frac{\cos d(Pv, v)}{\limsup_{n\to \infty} \cos d(z_n, v)}. 
  \end{split}
 \end{align}
 On the other hand, if $q$ is any element of
 $\omega_{\Delta}\bigl(\{z_n\}\bigr)$, 
 then there exists a subsequence $\{z_{n_i}\}$ of $\{z_n\}$ 
 which is $\Delta$-convergent to some $q\in X$. 
 Since 
 \begin{align*}
  \lim_{i\to \infty}\lambda_{n_i}=\infty, 
  \quad 
  \AC\bigl(\{R_{\lambda_{n_i}f}y_{n_i}\}\bigr)=\{q\}, 
  \quad \textrm{and} \quad 
  \sup_{i} d(R_{\lambda_{n_i}f}y_{n_i}, y_{n_i}) < \frac{\pi}{2}, 
 \end{align*}
 it follows from~(ii) of Lemma~\ref{lem:demi} that 
 $q\in \Argmin_X f$. 
 Thus $\omega_{\Delta}\bigl(\{z_n\}\bigr)$ 
 is a subset of $\Argmin_X f$. 
 It then follows from Lemma~\ref{lem:limsup-cos} that 
 \begin{align}\label{eq:thm:Halpern-CAT1-j}
  \cos d(Pv, v) \geq \limsup_{n\to \infty} \cos d(z_{n}, v). 
 \end{align}
 By~\eqref{eq:thm:Halpern-CAT1-g} 
 and~\eqref{eq:thm:Halpern-CAT1-j}, 
 we know that $\limsup_{n}t_n\leq 0$. 

 Therefore, Lemma~\ref{lem:AKTT-seq} yields that 
 $\lim_{n} s_n =0$ and hence 
 \begin{align}\label{eq:thm:Halpern-CAT1-k}
  \lim_{n\to \infty} d(Pv, y_n) = 0.  
 \end{align}
 Consequently, by~\eqref{eq:thm:Halpern-CAT1-a-b} 
 and~\eqref{eq:thm:Halpern-CAT1-k}, 
 we conclude that both $\{y_n\}$ and $\{z_n\}$ are 
 convergent to $Pv$. 
\end{proof}

Using Lemma~\ref{lem:KSY-seq}, 
we next obtain the last one of our three 
main results in this paper. 

\begin{theorem}\label{thm:Halpern-CAT1-another}
 Let $X$, $f$, $\{R_{\eta f}\}_{\eta>0}$, and $v$ 
 be the same as in Theorem~\ref{thm:Halpern-CAT1}, 
 and $\{y_n\}$ a sequence defined by $y_1\in X$ 
 and~\eqref{eq:Halpern-CAT1}, 
 where $\{\alpha_n\}$ is a sequence in $\opclitvl{0}{1}$ 
 and $\{\lambda_n\}$ is a sequence of positive real numbers  
 such that
 \begin{align*}
 \lim_{n\to \infty} \alpha_n = 0, \quad  
 \sum_{n=1}^{\infty}\alpha_n^2=\infty, 
 \quad \textrm{and} \quad \inf_n \lambda_n > 0.  
 \end{align*}
  If $\Argmin_X f$ is nonempty, 
 then both $\{y_n\}$ and $\{R_{\lambda_n f}y_n\}$ 
 are convergent to $Pv$, 
 where $P$ denotes the metric projection 
 of $X$ onto $\Argmin_X f$. 
\end{theorem}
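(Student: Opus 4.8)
The plan is to mimic the proof of Theorem~\ref{thm:Halpern-CAT1}, replacing the single appeal to Lemma~\ref{lem:AKTT-seq} by Lemma~\ref{lem:KSY-seq}, whose hypothesis only has to be checked along the special subsequences arising there. Put $z_n=R_{\lambda_n f}y_n$ and let $P$ be the metric projection of $X$ onto $\Argmin_X f$. Since $\Argmin_X f$ is nonempty, the ``only if'' part of the proof of Theorem~\ref{thm:Halpern-CAT1} applies word for word and gives \eqref{eq:thm:Halpern-CAT1-a-b}, \eqref{eq:thm:Halpern-CAT1-c}, and \eqref{eq:thm:Halpern-CAT1-d}; in particular both $\{y_n\}$ and $\{z_n\}$ are spherically bounded, $\sup_n d(Pv,y_n)<\pi/2$, and $\sup_n d(z_n,y_n)<\pi/2$. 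From \eqref{eq:KS-ineq-Halpern} and \eqref{eq:thm:Halpern-CAT1-a-b} I would then reproduce \eqref{eq:thm:Halpern-CAT1-e}, so that with $s_n$ as in \eqref{eq:thm:Halpern-CAT1-s_n}, $t_n$ as in \eqref{eq:thm:Halpern-CAT1-t_n}, and $\beta_n$ as in \eqref{eq:thm:Halpern-CAT1-beta_n} one has
\[
 s_{n+1}\le (1-\beta_n)s_n+\beta_n t_n \qquad (n\in\N),
\]
with $\beta_n\in\opclitvl{0}{1}$; the bound $\beta_n\ge \pi^2\alpha_n^2/16$ from \eqref{eq:thm:Halpern-CAT1-alpha-beta-b} together with $\sum_n\alpha_n^2=\infty$ yields $\sum_n\beta_n=\infty$.

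The heart of the matter is to verify the subsequential hypothesis of Lemma~\ref{lem:KSY-seq}. So let $\{n_i\}$ be an increasing sequence in $\N$ with $\limsup_i(s_{n_i}-s_{n_i+1})\le 0$; I must show $\limsup_i t_{n_i}\le 0$. Writing $a_n=\cos d(Pv,y_n)$, the definition \eqref{eq:Halpern-CAT1} of $\{y_n\}$ together with \eqref{eq:CAT1-ineq} gives $a_{n+1}\ge \alpha_n\cos d(Pv,v)+(1-\alpha_n)\cos d(Pv,z_n)$, whence
\[
 0\le \cos d(Pv,z_n)-\cos d(Pv,y_n)\le \frac{(a_{n+1}-a_n)+\alpha_n\bigl(a_n-\cos d(Pv,v)\bigr)}{1-\alpha_n},
\]
the left inequality coming from \eqref{eq:resolvent-CAT1-qfirm}. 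Since $a_{n_i+1}-a_{n_i}=s_{n_i}-s_{n_i+1}$ has nonpositive upper limit, $\alpha_{n_i}\to 0$, and $1-\alpha_{n_i}\to 1$, the right-hand side has nonpositive upper limit along $\{n_i\}$; hence $\cos d(Pv,z_{n_i})-\cos d(Pv,y_{n_i})\to 0$. As $\sup_i d(Pv,y_{n_i})<\pi/2$, Lemma~\ref{lem:sqns}, applied to the subsequence $\{y_{n_i}\}$ with the resolvents $R_{\lambda_{n_i}f}$, now yields
\[
 \lim_{i\to\infty} d(z_{n_i},y_{n_i})=0.
\]
This is exactly the step where the Saejung--Yotkaew hypothesis is exploited, and it is the only place where $\inf_n\lambda_n>0$ (rather than $\lambda_n\to\infty$) must be used; I expect it to be the main obstacle, since in Theorem~\ref{thm:Halpern-CAT1} the corresponding conclusion was avoided altogether by invoking part~(ii) of Lemma~\ref{lem:demi}.

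Having produced $d(z_{n_i},y_{n_i})\to 0$, I would argue that $\omega_{\Delta}\bigl(\{z_{n_i}\}\bigr)\subset\Argmin_X f$: given $q$ in this set, pass to a further subsequence $\Delta$-convergent to $q$; its asymptotic center equals $\{q\}$, and since the corresponding $y$-terms stay within vanishing distance of the $z$-terms, the asymptotic center of those $y$-terms is also $\{q\}$, whence part~(i) of Lemma~\ref{lem:demi} (now legitimately, because $\inf_n\lambda_n>0$) gives $q\in\Argmin_X f$. Lemma~\ref{lem:limsup-cos} then yields $\cos d(Pv,v)\ge\limsup_i\cos d(z_{n_i},v)$, and, exactly as in \eqref{eq:thm:Halpern-CAT1-g}, the vanishing of $\alpha_n$ kills the tangent term and gives $\limsup_i t_{n_i}\le 0$. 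Thus all hypotheses of Lemma~\ref{lem:KSY-seq} hold, so $s_n\to 0$, i.e.\ $d(Pv,y_n)\to 0$; finally \eqref{eq:thm:Halpern-CAT1-a-b} transfers this to $d(Pv,z_n)\to 0$, and both $\{y_n\}$ and $\{R_{\lambda_n f}y_n\}$ converge to $Pv$.
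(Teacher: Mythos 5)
Your proposal is correct and follows essentially the same route as the paper's proof: the same reduction to Lemma~\ref{lem:KSY-seq}, the same use of the recursion \eqref{eq:thm:Halpern-CAT1-f} with $\beta_n\geq \pi^2\alpha_n^2/16$, and the same verification of the subsequential hypothesis via \eqref{eq:resolvent-CAT1-qfirm}, Lemma~\ref{lem:sqns}, part~(i) of Lemma~\ref{lem:demi}, and Lemma~\ref{lem:limsup-cos}. The only cosmetic difference is that you divide by $1-\alpha_{n_i}$ (harmless since $\alpha_n\to 0$) where the paper keeps the estimate in the additive form \eqref{eq:thm:Halpern-CAT1-another-s-s}.
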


\begin{proof}
 Let $\{z_n\}$ be the sequence in $X$ 
 given by $z_n=R_{\lambda_n f}y_n$ for all $n\in \N$. 

 As in the proof of Theorem~\ref{thm:Halpern-CAT1}, 
 we can see  
 that~\eqref{eq:thm:Halpern-CAT1-a-b},~\eqref{eq:thm:Halpern-CAT1-c},
 and~\eqref{eq:thm:Halpern-CAT1-d} hold 
 and that both $\{y_n\}$ and $\{z_n\}$ are spherically bounded. 
 Let $\{\beta_n\}$, $\{s_n\}$, and $\{t_n\}$ 
 be the real sequences defined 
 by~\eqref{eq:thm:Halpern-CAT1-beta_n},~\eqref{eq:thm:Halpern-CAT1-s_n},
 and~\eqref{eq:thm:Halpern-CAT1-t_n}, respectively. 
 Then we can see
 that~\eqref{eq:thm:Halpern-CAT1-alpha-beta-a},~\eqref{eq:thm:Halpern-CAT1-f}, 
 and~\eqref{eq:thm:Halpern-CAT1-alpha-beta-b} hold 
 for all $n\in \N$. 
 Since $\{\alpha_n\}$ is a sequence in $\opclitvl{0}{1}$, so is $\{\beta_n\}$. 
 Since $\sum_{n=1}^{\infty}\alpha_n^2 =\infty$, 
 it follows from~\eqref{eq:thm:Halpern-CAT1-alpha-beta-b} 
 that $\sum_{n=1}^{\infty}\beta_n=\infty$. 

 Let $\{n_i\}$ be any increasing sequence in $\N$ such that 
 \begin{align}\label{eq:thm:Halpern-CAT1-another-a0}
  \limsup_{i\to \infty} \bigl(s_{n_i}-s_{n_i+1}\bigr) \leq 0. 
 \end{align}
 Then we show that 
 \begin{align}\label{eq:thm:Halpern-CAT1-another-key}
  \limsup_{i\to \infty}t_{n_i}\leq 0. 
 \end{align}
 If $\limsup_{i} \cos d(z_{n_i}, v) = 0$, then we have 
 \begin{align*}
  \limsup_{i\to \infty} t_{n_i} 
 = 1 - 
   \liminf_{i\to \infty}\frac{\cos d(Pv, v)}{\sin d(z_{n_i}, v) 
 \tan \bigl(\frac{\alpha_{n_i}}{2}d(z_{n_i}, v)\bigr) 
 + \cos d(z_{n_i}, v)} = -\infty. 
 \end{align*}
 If $\limsup_{i} \cos d(z_{n_i}, v) > 0$,  
 then it follows from Lemma~\ref{lem:limsup-liminf} that 
 \begin{align}\label{eq:thm:Halpern-CAT1-another-a}
  \limsup_{i\to \infty} t_{n_i} = 1 - 
  \frac{\cos d(Pv, v)}{\limsup_{i\to \infty} \cos d(z_{n_i}, v)}. 
 \end{align}
 It follows from~\eqref{eq:CAT1-ineq} that 
 \begin{align*}
  \begin{split}
 s_{n_i}-s_{n_i+1} 
 &=\cos d(Pv, y_{{n_i}+1}) - \cos d(Pv, y_{n_i}) \\
 &\geq \alpha_{n_i} \cos d(Pv, v) + (1-\alpha_{n_i})\cos d(Pv, z_{n_i})
 - \cos d(Pv, y_{n_i}). 
  \end{split}
 \end{align*}
 Hence~\eqref{eq:resolvent-CAT1-qfirm} yields that 
 \begin{align}
  \begin{split}\label{eq:thm:Halpern-CAT1-another-s-s}
 &s_{n_i}-s_{n_i+1} 
 + \alpha_{n_i} \bigl(\cos d(Pv, z_{n_i}) - \cos d(Pv, v)\bigr)\\
 &\quad \geq \cos d(Pv, z_{n_i}) - \cos d(Pv, y_{n_i}) \\
 &\quad \geq 0. 
  \end{split}
 \end{align}
 Since $\lim_{i}\alpha_{n_i}=0$, 
 it follows from~\eqref{eq:thm:Halpern-CAT1-another-a0} 
 and~\eqref{eq:thm:Halpern-CAT1-another-s-s} that 
 \begin{align*}
  \begin{split}
  \lim_{i\to \infty} \bigl(
  \cos d(Pv, z_{n_i}) - \cos d(Pv, y_{n_i}) \bigr) = 0. 
  \end{split}
 \end{align*}
 On the other hand, it follows from~\eqref{eq:thm:Halpern-CAT1-c} 
 that 
 \begin{align*}
   \sup_{i} d(Pv, y_{n_i}) \leq \sup_{n} d(Pv, y_{n}) < \frac{\pi}{2}. 
 \end{align*}
 Thus it follows from Lemma~\ref{lem:sqns} that 
 $\lim_{i} d(z_{n_i}, y_{n_i})=0$. 
 Let $q$ be any element of $\omega_{\Delta}\bigl(\{z_{n_i}\}\bigr)$. 
 Then there exists a subsequence $\{z_{n_{i_j}}\}$ of $\{z_{n_i}\}$ 
 which is $\Delta$-convergent to some $q\in X$. 
 Since 
 \begin{align*}
  \inf_{j}\lambda_{n_{i_j}} > 0, 
  \quad 
  \AC\bigl(\{y_{n_{i_j}}\}\bigr)=\{q\}, 
  \quad \textrm{and} \quad 
  \lim_{j\to \infty} d\bigl(R_{\lambda_{n_{i_j}}f}y_{n_{i_j}}, y_{n_{i_j}}\bigr) = 0, 
 \end{align*}
 it follows from~(i) of Lemma~\ref{lem:demi} that 
 $q\in \Argmin_X f$. 
 Thus $\omega_{\Delta}\bigl(\{z_{n_i}\}\bigr)$ 
 is a subset of $\Argmin_X f$. 
 Then, by Lemma~\ref{lem:limsup-cos}, 
 we know that  
 \begin{align}\label{eq:thm:Halpern-CAT1-another-c}
  \cos d(Pv, v) \geq \limsup_{i\to \infty} \cos d(z_{n_i}, v). 
 \end{align}
 By~\eqref{eq:thm:Halpern-CAT1-another-a}
 and~\eqref{eq:thm:Halpern-CAT1-another-c}, 
 we know that~\eqref{eq:thm:Halpern-CAT1-another-key} holds. 

 Therefore, Lemma~\ref{lem:KSY-seq} yields that 
 $\lim_{n} s_n =0$ and hence 
 \begin{align}\label{eq:thm:Halpern-CAT1-another-d}
   \lim_{n\to \infty} d(Pv, y_n) = 0.  
 \end{align}
 Consequently, by~\eqref{eq:thm:Halpern-CAT1-a-b} 
 and~\eqref{eq:thm:Halpern-CAT1-another-d}, 
 we conclude that both $\{y_n\}$ and $\{z_n\}$ are 
 convergent to $Pv$. 
\end{proof}

As direct consequences of 
Theorems~\ref{thm:Halpern-CAT1} 
and~\ref{thm:Halpern-CAT1-another}, 
we obtain the following two corollaries, respectively.  

\begin{corollary}\label{cor:Halpern-CAT1-S_H}
 Let $(S_H, \rho_{S_H})$ be a Hilbert sphere, 
 $X$ an admissible closed convex subset of $S_H$, 
 $f$ an element of $\mathit{\Gamma}_0(X)$, 
 $v$ an element of $X$,  
 and $\{y_n\}$ a sequence defined by $y_1\in X$ 
 and~\eqref{eq:Halpern-CAT1}, 
 where $\{\alpha_n\}$ is a sequence in $[0,1]$ 
 and $\{\lambda_n\}$ is a sequence of positive real numbers  
 such that $\lim_{n} \lambda_n=\infty$. 
 Then $\Argmin_X f$ is nonempty 
 if and only if 
 $\{R_{\lambda_n f}y_n\}$ is spherically bounded and 
 $\sup_{n} \rho_{S_H}(R_{\lambda_n f}y_n, y_n)<\pi/2$. 
 Further, if $\Argmin_X f$ is nonempty, $\lim_{n} \alpha_n = 0$, 
 and $\sum_{n=1}^{\infty}\alpha_n^2=\infty$, 
 then both $\{y_n\}$ and $\{R_{\lambda_n f}y_n\}$ 
 are convergent to $Pv$, 
 where $P$ denotes the metric projection 
 of $X$ onto $\Argmin_X f$. 
\end{corollary}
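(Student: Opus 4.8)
The plan is to deduce the statement directly from Theorem~\ref{thm:Halpern-CAT1}, since the hypotheses and conclusions of the corollary match those of the theorem verbatim once we identify the abstract metric $d$ with $\rho_{S_H}$. The only genuine task is structural: verifying that an admissible closed convex subset $X$ of the Hilbert sphere $S_H$ is itself an admissible complete $\CAT(1)$ space. All the remaining data --- $f\in\mathit{\Gamma}_0(X)$, $v\in X$, the recursion~\eqref{eq:Halpern-CAT1}, $\{\alpha_n\}\subset[0,1]$, and $\{\lambda_n\}$ positive with $\lim_n\lambda_n=\infty$ (together with $\lim_n\alpha_n=0$ and $\sum_{n=1}^{\infty}\alpha_n^2=\infty$ for the convergence part) --- already appear in exactly the form required by the theorem.

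First I would recall from the preliminaries that a Hilbert sphere $(S_H,\rho_{S_H})$ is a complete $\CAT(1)$ space. Then I would check that the closed convex subset $X$ inherits this structure. Completeness is immediate, since $X$ is closed in the complete space $S_H$. For the geodesic structure, admissibility of $X$ gives $\rho_{S_H}(w,w')<\pi/2<\pi$ for all $w,w'\in X$, so every pair of points of $X$ is joined by a geodesic of $S_H$, and convexity of $X$ forces that segment to remain in $X$; hence $X$ is $\pi$-geodesic with geodesics inherited from $S_H$. Finally, every geodesic triangle in $X$ is simultaneously a geodesic triangle in $S_H$ with the same comparison triangle, so the $\CAT(1)$ inequality for $X$ follows at once from that for $S_H$. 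Combined with the admissibility assumption, this establishes that $X$ is an admissible complete $\CAT(1)$ space.

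With this reduction in hand, I would simply invoke Theorem~\ref{thm:Halpern-CAT1} to obtain both the spherical-boundedness characterization of the nonemptiness of $\Argmin_X f$ and, under the additional conditions on $\{\alpha_n\}$, the convergence of $\{y_n\}$ and $\{R_{\lambda_n f}y_n\}$ to $Pv$. The main --- and essentially only --- obstacle is the routine structural verification of the previous paragraph; once a closed convex subset of $S_H$ is known to be a complete $\CAT(1)$ space, the corollary is an immediate specialization of the theorem.
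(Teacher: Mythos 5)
Your proposal is correct and matches the paper's treatment: the paper states this corollary as a direct consequence of Theorem~\ref{thm:Halpern-CAT1}, giving no further proof, and your only added content is the routine (and accurate) verification that an admissible closed convex subset of a Hilbert sphere is itself an admissible complete $\CAT(1)$ space, which is exactly the implicit reduction the paper relies on.
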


\begin{corollary}\label{cor:Halpern-CAT1-S_H-another}
 Let $(S_H, \rho_{S_H})$, $X$, $f$, and $v$ 
 be the same as in Corollary~\ref{cor:Halpern-CAT1-S_H} 
 and $\{y_n\}$ a sequence defined by $y_1\in X$ 
 and~\eqref{eq:Halpern-CAT1}, 
 where $\{\alpha_n\}$ is a sequence in $\opclitvl{0}{1}$ 
 and $\{\lambda_n\}$ is a sequence of positive real numbers  
 such that $\lim_n \alpha_n = 0$, 
 $\sum_{n=1}^{\infty}\alpha_n^2=\infty$, 
 and $\inf_{n} \lambda_n >0$. 
 If $\Argmin_X f$ is nonempty, 
 then both $\{y_n\}$ and $\{R_{\lambda_n f}y_n\}$ 
 are convergent to $Pv$, 
 where $P$ denotes the metric projection 
 of $X$ onto $\Argmin_X f$. 
\end{corollary}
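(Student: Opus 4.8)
The plan is to deduce this corollary directly from Theorem~\ref{thm:Halpern-CAT1-another} by checking that the data $(X, f, v, \{\alpha_n\}, \{\lambda_n\})$ coming from the Hilbert sphere satisfies every hypothesis of that theorem. The only point requiring any argument is the verification that an admissible closed convex subset $X$ of $S_H$ is itself an admissible complete $\CAT(1)$ space; once this structural fact is in place, all remaining hypotheses are already part of the statement of the corollary, and the conclusion is immediate.

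First I would recall from the preliminaries that the Hilbert sphere $(S_H, \rho_{S_H})$ is a uniquely $\pi$-geodesic complete metric space lying in the class of complete $\CAT(1)$ spaces. Since $X$ is closed in $S_H$ and $S_H$ is complete, $X$ is complete. Since $X$ is admissible, we have $\rho_{S_H}(w, w') < \pi/2 < \pi$ for all $w, w' \in X$, so between any two points of $X$ there is a (unique) geodesic path in $S_H$; by the very definition of a convex subset, its image lies in $X$, whence $X$ is a $\pi$-geodesic space whose geodesic segments coincide with those of $S_H$. Consequently every geodesic triangle in $X$ is a geodesic triangle in $S_H$ and inherits the $\CAT(1)$ inequality, so $X$ is a complete $\CAT(1)$ space. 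Finally, the admissibility of $X$ as a subset is exactly condition~\eqref{eq:admissible}, so $X$ is an admissible complete $\CAT(1)$ space.

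With this identification settled, $f \in \mathit{\Gamma}_0(X)$, $v \in X$, the sequence $\{y_n\}$ generated by~\eqref{eq:Halpern-CAT1}, the control sequence $\{\alpha_n\} \subset \opclitvl{0}{1}$ satisfying $\lim_n \alpha_n = 0$ and $\sum_{n=1}^{\infty} \alpha_n^2 = \infty$, and the positive sequence $\{\lambda_n\}$ with $\inf_n \lambda_n > 0$ are precisely the ingredients demanded by Theorem~\ref{thm:Halpern-CAT1-another}. Assuming, as hypothesized, that $\Argmin_X f$ is nonempty, that theorem yields at once that both $\{y_n\}$ and $\{R_{\lambda_n f} y_n\}$ converge to $Pv$, where $P$ is the metric projection of $X$ onto $\Argmin_X f$, which is the asserted conclusion. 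I anticipate no genuine obstacle here, since the entire content is the routine structural verification above; the only subtlety worth a sentence is that admissibility keeps all pairwise distances below $\pi$, so that the convexity of $X$ truly forces geodesics to stay within $X$ and $X$ inherits the comparison inequality, making the result a direct specialization of Theorem~\ref{thm:Halpern-CAT1-another}.
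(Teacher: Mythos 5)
Your proposal is correct and follows exactly the paper's route: the paper states this corollary as a direct consequence of Theorem~\ref{thm:Halpern-CAT1-another}, which is precisely your specialization. Your added verification that an admissible closed convex subset of $S_H$ is itself an admissible complete $\CAT(1)$ space is the routine structural fact the paper leaves implicit, and you carry it out correctly.
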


\section{Results in $\textup{CAT}(\kappa)$ spaces with a positive $\kappa$}
\label{sec:cor}

In this final section, 
using Theorems~\ref{thm:Mann-CAT1},~\ref{thm:Halpern-CAT1}, 
and~\ref{thm:Halpern-CAT1-another}, we deduce three corollaries 
in $\CAT(\kappa)$ spaces with a positive real number $\kappa$. 

Throughout this section, we suppose the following. 
\begin{itemize}
 \item $\kappa$ is a positive real number and
       $D_{\kappa}=\pi/\sqrt{\kappa}$; 
 \item $X$ is a complete $\CAT(\kappa)$ space such that 
$d(w,w')<D_{\kappa}/2$ for all $w,w'\in X$; 
 \item $f$ is a proper lower semicontinuous convex function 
of $X$ into $\opclitvl{-\infty}{\infty}$; 
 \item $\tilde{R}_{\eta f}$ is the mapping of $X$ into itself defined by 
 \begin{align*}
  \tilde{R}_{\eta f} x = \Argmin_{y\in X} 
 \left\{f(y) + \frac{1}{\eta}\tan \bigl(\sqrt{\kappa} d(y, x)\bigr) 
 \sin \bigl(\sqrt{\kappa} d(y, x)\bigr)\right\}
 \end{align*}
 for all $\eta>0$ and $x\in X$. 
\end{itemize}

Since the space $(X, \sqrt{\kappa}d)$ is an admissible complete 
$\CAT(1)$ space, 
the mapping $\tilde{R}_{\eta f}$ is well defined 
and Theorems~\ref{thm:Mann-CAT1},~\ref{thm:Halpern-CAT1}, 
and~\ref{thm:Halpern-CAT1-another} immediately imply 
the following three corollaries, respectively. 

\begin{corollary}\label{cor:Mann-CATk}
 Let $\{x_n\}$ be a sequence defined by $x_1\in X$ and 
 \begin{align*}
  x_{n+1} = \alpha_n x_n \oplus (1-\alpha_n) \tilde{R}_{\lambda_n f}x_n 
 \quad (n=1,2,\dots), 
 \end{align*}
 where $\{\alpha_n\}$ is a sequence in $\clopitvl{0}{1}$ and 
 $\{\lambda_n\}$ is a sequence of positive real numbers  
 such that $\sum_{n=1}^{\infty}(1-\alpha_n) \lambda_n=\infty$. 
 Then $\Argmin_X f$ is nonempty 
 if and only if 
 \begin{align*}
 \inf_{y\in X} \limsup_{n\to \infty}
 d(y, \tilde{R}_{\lambda_n f}x_n) < \frac{D_{\kappa}}{2} 
 \quad \textrm{and} \quad 
 \sup_{n} d(\tilde{R}_{\lambda_n f}x_n, x_n)<
  \frac{D_{\kappa}}{2}. 
 \end{align*}
 Further, if $\Argmin_X f$ is nonempty and $\sup_n\alpha_n<1$, 
 then both $\{x_n\}$ and $\{\tilde{R}_{\lambda_n f}x_n\}$ 
 are $\Delta$-convergent to an element $x_{\infty}$ of $\Argmin_X f$. 
\end{corollary}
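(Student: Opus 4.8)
The plan is to deduce this statement directly from Theorem~\ref{thm:Mann-CAT1} by passing to the rescaled metric space. Put $d'=\sqrt{\kappa}\,d$ and $X'=(X,d')$. By the equivalence recorded in Section~\ref{sec:pre}, the hypothesis that $(X,d)$ is a complete $\CAT(\kappa)$ space with $d(w,w')<D_{\kappa}/2$ for all $w,w'\in X$ is precisely the statement that $X'$ is an admissible complete $\CAT(1)$ space. Since multiplying the metric by the positive constant $\sqrt{\kappa}$ leaves the geodesic segments, and hence the convex combinations $\alpha x\oplus(1-\alpha)y$, unchanged, the function $f$ remains proper, lower semicontinuous, and convex on $X'$, so that $f\in\mathit{\Gamma}_0(X')$; moreover $\Argmin_{X'}f=\Argmin_X f$.

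The key identification is that $\tilde{R}_{\eta f}$ computed with respect to $d$ coincides with the resolvent $R_{\eta f}$ of $\eta f$ computed in $X'$. Indeed, for $x\in X$ and $\eta>0$, dividing the objective defining $R_{\eta f}x$ in $X'$ by the positive constant $\eta$ gives
\begin{align*}
 R_{\eta f}x
 &= \Argmin_{y\in X}\bigl\{\eta f(y)+\tan d'(y,x)\sin d'(y,x)\bigr\} \\
 &= \Argmin_{y\in X}\Bigl\{f(y)+\tfrac{1}{\eta}\tan\bigl(\sqrt{\kappa}d(y,x)\bigr)\sin\bigl(\sqrt{\kappa}d(y,x)\bigr)\Bigr\} \\
 &= \tilde{R}_{\eta f}x.
\end{align*}
Because convex combinations agree in the two metrics, the sequence $\{x_n\}$ generated in $X$ by the recursion in the statement is exactly the sequence generated in $X'$ by~\eqref{eq:Mann-CAT1} with the same data $\{\alpha_n\}$ and $\{\lambda_n\}$, and the hypothesis $\sum_{n=1}^{\infty}(1-\alpha_n)\lambda_n=\infty$ is unaffected. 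Thus Theorem~\ref{thm:Mann-CAT1} applies verbatim to $\{x_n\}$ viewed in $X'$.

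It remains to translate the conclusions back to $(X,d)$. Writing $z_n=\tilde{R}_{\lambda_n f}x_n$ and using $d'=\sqrt{\kappa}d$ together with $\sqrt{\kappa}\cdot(D_{\kappa}/2)=\pi/2$, spherical boundedness of $\{z_n\}$ in $X'$, namely $\inf_{y}\limsup_n d'(y,z_n)<\pi/2$, is equivalent to $\inf_{y}\limsup_n d(y,z_n)<D_{\kappa}/2$, and likewise $\sup_n d'(z_n,x_n)<\pi/2$ is equivalent to $\sup_n d(z_n,x_n)<D_{\kappa}/2$; this yields the claimed equivalence for the nonemptiness of $\Argmin_X f$. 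For the convergence assertion, observe that the asymptotic center $\AC\bigl(\{x_n\}\bigr)$ is unaffected by multiplying the metric by a positive scalar, so that $\Delta$-convergence in $X'$ is identical to $\Delta$-convergence in $(X,d)$; hence part~(ii) of Theorem~\ref{thm:Mann-CAT1} gives that, when $\Argmin_X f\neq\emptyset$ and $\sup_n\alpha_n<1$, both $\{x_n\}$ and $\{z_n\}$ are $\Delta$-convergent in $(X,d)$ to a common element $x_{\infty}$ of $\Argmin_{X'}f=\Argmin_X f$.

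No genuine analytic difficulty arises; the entire argument is a transfer principle. The point demanding care is the bookkeeping of the rescaling: one must verify that the factor $1/\eta$ in the definition of $\tilde{R}_{\eta f}$ is exactly what converts $R_{\eta f}$ in $X'$ into $\tilde{R}_{\eta f}$ in $X$, and one must confirm that every relevant structure---geodesics and convex combinations, the class $\mathit{\Gamma}_0$, the set of minimizers, spherical boundedness, and $\Delta$-convergence---is either invariant or transforms by the constant $\sqrt{\kappa}$ under the change of metric. Once these checks are in place, the corollary is immediate.
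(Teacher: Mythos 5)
Your proposal is correct and is exactly the paper's argument: the paper likewise derives this corollary by observing that $(X,\sqrt{\kappa}\,d)$ is an admissible complete $\CAT(1)$ space in which $\tilde{R}_{\eta f}$ becomes the resolvent $R_{\eta f}$, and then invoking Theorem~\ref{thm:Mann-CAT1}. Your write-up merely makes explicit the bookkeeping (invariance of geodesics, $\mathit{\Gamma}_0$, minimizers, spherical boundedness, and $\Delta$-convergence under the rescaling) that the paper leaves as ``immediate.''
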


\begin{corollary}\label{cor:Halpern-CATk}
 Let $v$ be an element of $X$ 
 and $\{y_n\}$ a sequence defined by $y_1\in X$ and 
 \begin{align}\label{eq:Halpern-CATk}
  y_{n+1} = \alpha_n v \oplus (1-\alpha_n) \tilde{R}_{\lambda_n f}y_n 
 \quad (n=1,2,\dots), 
 \end{align}
 where $\{\alpha_n\}$ is a sequence in $[0,1]$ and 
 $\{\lambda_n\}$ is a sequence of positive real numbers  
 such that $\lim_{n} \lambda_n=\infty$. 
 Then $\Argmin_X f$ is nonempty 
 if and only if 
 \begin{align*}
 \inf_{y\in X} \limsup_{n\to \infty}
 d(y, \tilde{R}_{\lambda_n f}y_n) < \frac{D_{\kappa}}{2} 
 \quad \textrm{and} \quad 
 \sup_{n} d(\tilde{R}_{\lambda_n f}y_n, y_n)<
  \frac{D_{\kappa}}{2}. 
 \end{align*}
 Further, if $\Argmin_X f$ is nonempty, $\lim_n \alpha_n = 0$, 
 and $\sum_{n=1}^{\infty}\alpha_n^2=\infty$, 
 then both $\{y_n\}$ and $\{\tilde{R}_{\lambda_n f}y_n\}$ 
 are convergent to $Pv$, 
 where $P$ denotes the metric projection 
 of $X$ onto $\Argmin_X f$. 
\end{corollary}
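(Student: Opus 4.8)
The plan is to transfer the entire problem to the associated $\CAT(1)$ space and then invoke Theorem~\ref{thm:Halpern-CAT1}. Set $\tilde{d}=\sqrt{\kappa}\,d$ and $\tilde{X}=(X,\tilde{d})$. By the equivalence recorded in Section~\ref{sec:pre}, the standing hypothesis that $X$ is a complete $\CAT(\kappa)$ space with $d(w,w')<D_{\kappa}/2$ for all $w,w'$ guarantees that $\tilde{X}$ is an admissible complete $\CAT(1)$ space. Since rescaling the metric by a positive constant leaves the geodesics unchanged---and hence the points $\alpha x\oplus(1-\alpha)y$---and leaves the underlying topology unchanged, the function $f$ remains proper, lower semicontinuous, and convex on $\tilde{X}$, so $f\in\mathit{\Gamma}_0(\tilde{X})$.

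Next I would identify $\tilde{R}_{\eta f}$ with the $\CAT(1)$ resolvent of $\eta f$ computed in $\tilde{X}$. Writing out that resolvent, $\Argmin_{y}\{\eta f(y)+\tan\tilde{d}(y,x)\sin\tilde{d}(y,x)\}$, and dividing the objective by $\eta>0$ (which does not move the minimizer), one obtains exactly the formula defining $\tilde{R}_{\eta f}$; thus the resolvent of $\eta f$ in $\tilde{X}$ coincides with $\tilde{R}_{\eta f}$. Because the operation $\oplus$ is the same in $X$ and $\tilde{X}$, the recursion~\eqref{eq:Halpern-CATk} is precisely~\eqref{eq:Halpern-CAT1} for the space $\tilde{X}$, the function $f$, and the point $v$.

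With these identifications, Theorem~\ref{thm:Halpern-CAT1} applies verbatim in $\tilde{X}$, and it remains only to translate its conclusions back to $(X,d)$. The set $\Argmin_X f$ and the metric projection $P$ onto it are unaffected by the rescaling, since the nearest-point property depends only on the ordering of distances; also, convergence to $Pv$ in $\tilde{d}$ is equivalent to convergence in $d$. The spherical boundedness of $\{\tilde{R}_{\lambda_n f}y_n\}$ in $\tilde{X}$, namely $\inf_{y}\limsup_n\sqrt{\kappa}\,d(y,\tilde{R}_{\lambda_n f}y_n)<\pi/2$, is exactly $\inf_{y}\limsup_n d(y,\tilde{R}_{\lambda_n f}y_n)<D_{\kappa}/2$; likewise $\sup_n\tilde{d}(\tilde{R}_{\lambda_n f}y_n,y_n)<\pi/2$ becomes $\sup_n d(\tilde{R}_{\lambda_n f}y_n,y_n)<D_{\kappa}/2$. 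Hence both halves of the corollary follow.

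The argument carries no genuine difficulty, as all the analytic work resides in Theorem~\ref{thm:Halpern-CAT1}; the only point demanding care is the bookkeeping of the factor $\sqrt{\kappa}$, ensuring that every occurrence of $\pi/2$ in the $\CAT(1)$ statement becomes $D_{\kappa}/2=\pi/(2\sqrt{\kappa})$ in the $\CAT(\kappa)$ statement and that the $\tan$--$\sin$ objective rescales to the stated form of $\tilde{R}_{\eta f}$.
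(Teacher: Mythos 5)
Your proposal is correct and is essentially the paper's own argument: the authors likewise observe that $(X,\sqrt{\kappa}\,d)$ is an admissible complete $\CAT(1)$ space, identify $\tilde{R}_{\eta f}$ with the resolvent of $\eta f$ there (after dividing the objective by $\eta$), and then read off the corollary from Theorem~\ref{thm:Halpern-CAT1} with the $\pi/2$ thresholds rescaled to $D_{\kappa}/2$. Your write-up just makes explicit the bookkeeping that the paper compresses into the single sentence preceding the corollaries.
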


\begin{corollary}\label{cor:Halpern-CATk-another}
 Let $v$ be an element of $X$,  
 and $\{y_n\}$ a sequence defined by $y_1\in X$ 
 and~\eqref{eq:Halpern-CATk}, 
 where $\{\alpha_n\}$ is a sequence in $\opclitvl{0}{1}$ 
 and $\{\lambda_n\}$ is a sequence of positive real numbers  
 such that 
 \begin{align*}
 \lim_{n\to \infty} \alpha_n = 0, 
 \quad  
 \sum_{n=1}^{\infty}\alpha_n^2=\infty, 
 \quad \textrm{and} \quad 
 \inf_n \lambda_n > 0. 
 \end{align*}
  If $\Argmin_X f$ is nonempty, 
 then both $\{y_n\}$ and $\{\tilde{R}_{\lambda_n f}y_n\}$ 
 are convergent to $Pv$, 
 where $P$ denotes the metric projection 
 of $X$ onto $\Argmin_X f$. 
\end{corollary}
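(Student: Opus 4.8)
The plan is to reduce the statement to Theorem~\ref{thm:Halpern-CAT1-another} by the rescaling $d\mapsto\sqrt{\kappa}\,d$. As recorded in Section~\ref{sec:pre}, since $\kappa>0$ and $d(w,w')<D_{\kappa}/2$ for all $w,w'\in X$, the space $(X,\sqrt{\kappa}\,d)$ is an admissible complete $\CAT(1)$ space. First I would verify that $\tilde{R}_{\eta f}$ is precisely the $\CAT(1)$-resolvent $R_{\eta f}$ computed in $(X,\sqrt{\kappa}\,d)$: writing $\tilde{d}=\sqrt{\kappa}\,d$, the resolvent of $\eta f$ in $(X,\tilde{d})$ is $\Argmin_{y\in X}\{\eta f(y)+\tan\tilde{d}(y,x)\sin\tilde{d}(y,x)\}$, and dividing the objective by $\eta$ shows this equals $\Argmin_{y\in X}\{f(y)+\frac{1}{\eta}\tan(\sqrt{\kappa}d(y,x))\sin(\sqrt{\kappa}d(y,x))\}=\tilde{R}_{\eta f}x$.

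Next I would observe that the geodesic structure is untouched by the rescaling: an arc-length geodesic $c$ for $d$ from $x$ to $y$ yields the arc-length geodesic $s\mapsto c(s/\sqrt{\kappa})$ for $\tilde{d}$, with the same image, so the convex combination $\alpha x\oplus(1-\alpha)y$ is the same point whether computed in $(X,d)$ or in $(X,\tilde{d})$. Consequently the recursion~\eqref{eq:Halpern-CATk} in $(X,d)$, namely $y_{n+1}=\alpha_n v\oplus(1-\alpha_n)\tilde{R}_{\lambda_n f}y_n$, coincides term by term with the recursion~\eqref{eq:Halpern-CAT1} in $(X,\tilde{d})$ with resolvent $R_{\lambda_n f}$.

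The hypotheses then transfer verbatim: the conditions $\lim_n\alpha_n=0$, $\sum_n\alpha_n^2=\infty$, and $\inf_n\lambda_n>0$ involve the scalar sequences only, and the nonemptiness of $\Argmin_X f$ is a property of $f$ alone, independent of the choice of metric. Thus Theorem~\ref{thm:Halpern-CAT1-another}, applied in $(X,\tilde{d})$, gives that both $\{y_n\}$ and $\{\tilde{R}_{\lambda_n f}y_n\}$ converge with respect to $\tilde{d}$ to $Pv$, where $P$ is the metric projection of $X$ onto $\Argmin_X f$. Since $\tilde{d}=\sqrt{\kappa}\,d$, convergence in $\tilde{d}$ is identical to convergence in $d$, and the metric projection onto $\Argmin_X f$ is the same point for both metrics because $\tilde{d}$ and $d$ order distances identically. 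This yields the claimed convergence in $(X,d)$.

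There is no genuine obstacle here, as this is a direct-consequence corollary; the only steps requiring any care are the explicit identification of $\tilde{R}_{\eta f}$ with the rescaled $\CAT(1)$-resolvent and the observation that convex combinations and metric projections are invariant under scaling of the metric, both of which become immediate once the substitution $\tilde{d}=\sqrt{\kappa}\,d$ is made.
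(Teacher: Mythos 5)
Your proposal is correct and matches the paper's approach exactly: the paper likewise deduces this corollary by noting that $(X,\sqrt{\kappa}\,d)$ is an admissible complete $\CAT(1)$ space in which $\tilde{R}_{\eta f}$ becomes the resolvent $R_{\eta f}$, and then applying Theorem~\ref{thm:Halpern-CAT1-another}. Your write-up simply makes explicit the routine verifications (invariance of geodesics, convex combinations, and the metric projection under rescaling) that the paper leaves implicit.
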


\section*{Concluding remarks}

As we stated in Section~\ref{sec:intro}, 
it is known~\cite{MR3396425}*{Definition~4.1 and Lemma~4.2} 
that the classical resolvent given by~\eqref{eq:resolvent-Hadamard} 
is still well defined for any proper lower semicontinuous convex 
function in a complete $\CAT(1)$ space 
whose diameter is strictly less than $\pi/2$. 
However, it is also 
known~\cite{MR3463526}*{Corollary~3.3} 
that this diameter condition on the space 
implies that such a function always has a minimizer. 

On the other hand, according to~\cite{MR3463526}*{Theorem~4.2}, 
we can define another type of resolvent by~\eqref{eq:resolvent-CAT1-def2} with 
the perturbation function $\tan d \sin d$ 
in an admissible complete $\CAT(1)$ space.  
This makes it possible for us to study the existence 
of minimizers as well as the convergence to minimizers 
through the two proximal-type algorithms 
defined by~\eqref{eq:Mann-CAT1} and~\eqref{eq:Halpern-CAT1}. 

Finally, we point out that it is not clear whether 
there is any relationship between the two types of resolvents 
and hence we cannot deduce any result for the classical resolvents 
from the results obtained in this paper so far. 

\section*{Acknowledgment}
The authors would like to thank 
the anonymous referees for their helpful comments on the 
original version of this paper. 
This work was supported by JSPS KAKENHI Grant Numbers 
15K05007 and 17K05372. 

\begin{bibdiv}
 \begin{biblist}

\bib{MR2338104}{article}{
   author={Aoyama, Koji},
   author={Kimura, Yasunori},
   author={Takahashi, Wataru},
   author={Toyoda, Masashi},
   title={Approximation of common fixed points of a countable family of
   nonexpansive mappings in a Banach space},
   journal={Nonlinear Anal.},
   volume={67},
   date={2007},
%   number={8},
   pages={2350--2360},
%   issn={0362-546X},
%   review={\MR{2338104 (2008g:47093)}},
%   doi={10.1016/j.na.2006.08.032},
}

\bib{MR2780284}{article}{
   author={Aoyama, Koji},
   author={Kohsaka, Fumiaki},
   author={Takahashi, Wataru},
   title={Proximal point methods for monotone operators in Banach spaces},
   journal={Taiwanese J. Math.},
   volume={15},
   date={2011},
%   number={1},
   pages={259--281},
%   issn={1027-5487},
%   review={\MR{2780284 (2011m:47126)}},
}

\bib{MR3206460}{article}{
   author={Ariza-Ruiz, David},
   author={Leu\c stean, Lauren\c tiu},
   author={L\'opez-Acedo, Genaro},
   title={Firmly nonexpansive mappings in classes of geodesic spaces},
   journal={Trans. Amer. Math. Soc.},
   volume={366},
   date={2014},
%   number={8},
   pages={4299--4322},
%   issn={0002-9947},
%   review={\MR{3206460}},
%   doi={10.1090/S0002-9947-2014-05968-0},
}

\bib{MR3047087}{article}{
   author={Ba{\v{c}}{\'a}k, Miroslav},
   title={The proximal point algorithm in metric spaces},
   journal={Israel J. Math.},
   volume={194},
   date={2013},
%   number={2},
   pages={689--701},
%   issn={0021-2172},
%   review={\MR{3047087}},
%   doi={10.1007/s11856-012-0091-3},
}

\bib{MR3241330}{book}{
   author={Ba{\v{c}}{\'a}k, Miroslav},
   title={Convex analysis and optimization in Hadamard spaces},
%   series={De Gruyter Series in Nonlinear Analysis and Applications},
%   volume={22},
   publisher={De Gruyter, Berlin},
   date={2014},
%   pages={viii+185},
%   isbn={978-3-11-036103-2},
%   isbn={978-3-11-036162-9},
%   review={\MR{3241330}},
%   doi={10.1515/9783110361629},
}

\bib{MR3346750}{article}{
   author={Ba{\v{c}}{\'a}k, Miroslav},
   author={Reich, Simeon},
   title={The asymptotic behavior of a class of nonlinear semigroups in Hadamard spaces},
%   language={},
   journal={J. Fixed Point Theory Appl.},
   volume={16},
   date={2014},
%   number={1-2},
   pages={189--202},
%   issn={},
%   review={},
%   doi={},
}

\bib{MR3706153}{article}{
   author={Bargetz, Christian},
   author={Dymond, Michael},
   author={Reich, Simeon},
   title={Porosity results for sets of strict contractions on geodesic metric spaces},
%   language={},
   journal={Topol. Methods Nonlinear Anal.},
   volume={50},
   date={2017},
%   number={1},
   pages={89--124},
%   issn={},
%   review={},
%   doi={},
}

\bib{MR2798533}{book}{
   author={Bauschke, Heinz H.},
   author={Combettes, Patrick L.},
   title={Convex analysis and monotone operator theory in Hilbert spaces},
%   series={CMS Books in Mathematics/Ouvrages de Math\'ematiques de la SMC},
%   note={With a foreword by H\'edy Attouch},
   publisher={Springer, New York},
   date={2011},
%   pages={xvi+468},
%   isbn={978-1-4419-9466-0},
%   review={\MR{2798533 (2012h:49001)}},
%   doi={10.1007/978-1-4419-9467-7},
}

\bib{MR2036787}{article}{
   author={Bauschke, Heinz H.},
   author={Matou{\v{s}}kov{\'a}, Eva},
   author={Reich, Simeon}, 
   title={Projection and proximal point methods: convergence results and counterexamples},
%   language={},
   journal={Nonlinear Anal.},
   volume={56},
   date={2004},
%   number={5},
   pages={715--738},
%   issn={},
%   review={},
%   doi={},
}

\bib{MR491922}{article}{
   author={Br{\'e}zis, H.},
   author={Lions, P.-L.},
   title={Produits infinis de r\'esolvantes},
%   language={French, with English summary},
   journal={Israel J. Math.},
   volume={29},
   date={1978},
%   number={4},
   pages={329--345},
%   issn={0021-2172},
%   review={\MR{491922 (80b:47068)}},
%   doi={10.1007/BF02761171},
}

\bib{MR1744486}{book}{
   author={Bridson, Martin R.},
   author={Haefliger, Andr{\'e}},
   title={Metric spaces of non-positive curvature},
%   series={Grundlehren der Mathematischen Wissenschaften [Fundamental
%   Principles of Mathematical Sciences]},
%   volume={319},
   publisher={Springer-Verlag, Berlin},
   date={1999},
%   pages={xxii+643},
%   isbn={3-540-64324-9},
%   review={\MR{1744486 (2000k:53038)}},
%   doi={10.1007/978-3-662-12494-9},
}

\bib{MR470761}{article}{
   author={Bruck, Ronald E.},
   author={Reich, Simeon},
   title={Nonexpansive projections and resolvents of accretive operators in Banach spaces},
%   language={},
   journal={Houston J. Math.},
   volume={3},
   date={1977},
%   number={4},
   pages={459--470},
%   issn={},
%   review={},
%   doi={},
}

\bib{MR1835418}{book}{
   author={Burago, Dmitri},
   author={Burago, Yuri},
   author={Ivanov, Sergei},
   title={A course in metric geometry},
%   series={Graduate Studies in Mathematics, 33},
%   volume={},
   publisher={American Mathematical Society, Providence, RI},
   date={2001},
%   pages={},
%   isbn={},
%   review={},
%   doi={},
}

\bib{MR1168183}{article}{
   author={Eckstein, Jonathan},
   author={Bertsekas, Dimitri P.},
   title={On the Douglas-Rachford splitting method and the proximal point
   algorithm for maximal monotone operators},
   journal={Math. Programming},
   volume={55},
   date={1992},
%   number={3, Ser. A},
   pages={293--318},
%   issn={0025-5610},
%   review={\MR{1168183}},
%   doi={10.1007/BF01581204},
}

\bib{MR2508878}{article}{
   author={Esp{\'{\i}}nola, Rafa},
   author={Fern{\'a}ndez-Le{\'o}n, Aurora},
   title={${\rm CAT}(k)$-spaces, weak convergence and fixed points},
   journal={J. Math. Anal. Appl.},
   volume={353},
   date={2009},
%   number={1},
   pages={410--427},
%   issn={0022-247X},
%   review={\MR{2508878 (2010d:47092)}},
%   doi={10.1016/j.jmaa.2008.12.015},
}

\bib{EspinolaNicolae-JNCA16}{article}{
   author={Esp{\'{\i}}nola, Rafa},
   author={Nicolae, Adriana},
   title={Proximal minimization in ${\rm CAT}(\kappa)$ spaces},
   journal={J. Nonlinear Convex Anal.}, 
   volume={17},
   date={2016}, 
%   number={11}, 
   pages={2329--2338}
%   arXiv:1607.03660v1 [math.OC]. 
}

\bib{MR744194}{book}{
   author={Goebel, Kazimierz},
   author={Reich, Simeon},
   title={Uniform convexity, hyperbolic geometry, and nonexpansive mappings},
%   series={Monographs and Textbooks in Pure and Applied Mathematics},
%   volume={83},
   publisher={Marcel Dekker, Inc., New York},
   date={1984},
%   pages={ix+170},
%   isbn={0-8247-7223-7},
%   review={\MR{744194}},
}

\bib{MR1092735}{article}{
   author={G{\"u}ler, Osman},
   title={On the convergence of the proximal point algorithm for convex
   minimization},
   journal={SIAM J. Control Optim.},
   volume={29},
   date={1991},
%   number={2},
   pages={403--419},
%   issn={0363-0129},
%   review={\MR{1092735 (92c:90086)}},
%   doi={10.1137/0329022},
}

\bib{MR1360608}{article}{
   author={Jost, J{\"u}rgen},
   title={Convex functionals and generalized harmonic maps into spaces of
   nonpositive curvature},
   journal={Comment. Math. Helv.},
   volume={70},
   date={1995},
%   number={4},
   pages={659--673},
%   issn={0010-2571},
%   review={\MR{1360608 (96j:58043)}},
%   doi={10.1007/BF02566027},
}

\bib{MR1788273}{article}{
   author={Kamimura, Shoji},
   author={Takahashi, Wataru},
   title={Approximating solutions of maximal monotone operators in Hilbert
   spaces},
   journal={J. Approx. Theory},
   volume={106},
   date={2000},
%   number={2},
   pages={226--240},
%   issn={0021-9045},
%   review={\MR{1788273 (2002g:47144)}},
%   doi={10.1006/jath.2000.3493},
}

\bib{MR1113394}{article}{
   author={Kendall, Wilfrid S.},
   title={Convexity and the hemisphere},
   journal={J. London Math. Soc. (2)},
   volume={43},
   date={1991},
%   number={3},
   pages={567--576},
%   issn={0024-6107},
%   review={\MR{1113394 (92h:58021)}},
%   doi={10.1112/jlms/s2-43.3.567},
}

\bib{MR3463526}{article}{
   author={Kimura, Yasunori},
   author={Kohsaka, Fumiaki},
   title={Spherical nonspreadingness of resolvents of convex functions in
   geodesic spaces},
   journal={J. Fixed Point Theory Appl.},
   volume={18},
   date={2016},
%   number={1},
   pages={93--115},
%   issn={1661-7738},
%   review={\MR{3463526}},
%   doi={10.1007/s11784-015-0267-7},
}

\bib{MR3574140}{article}{
   author={Kimura, Yasunori},
   author={Kohsaka, Fumiaki},
   title={Two modified proximal point algorithms for convex functions in
   Hadamard spaces},
   journal={Linear Nonlinear Anal.},
   volume={2},
   date={2016},
%   number={1},
   pages={69--86},
%   issn={2188-8159},
%   review={\MR{3574140}},
}

\bib{MR3638673}{article}{
   author={Kimura, Yasunori},
   author={Kohsaka, Fumiaki},
   title={The proximal point algorithm in geodesic spaces with curvature
   bounded above},
   journal={Linear Nonlinear Anal.},
   volume={3},
   date={2017},
%   number={1},
   pages={133--148},
%   issn={2188-8159},
%   review={\MR{3638673}},
}

\bib{MR3570781}{article}{
   author={Kimura, Yasunori},
   author={Saejung, Satit},
   title={Strong convergence for a common fixed point of two different
   generalizations of cutter operators},
   journal={Linear Nonlinear Anal.},
   volume={1},
   date={2015},
%   number={1},
   pages={53--65},
%   issn={2188-8159},
%   review={\MR{3570781}},
}

\bib{MR3213144}{article}{
   author={Kimura, Yasunori},
   author={Saejung, Satit},
   author={Yotkaew, Pongsakorn},
   title={The Mann algorithm in a complete geodesic space with curvature
   bounded above},
   journal={Fixed Point Theory Appl.},
   date={2013},
   pages={2013:336, 1--13},
%   pages={2013:336, 13},
%   issn={1687-1812},
%   review={\MR{3213144}},
%   doi={10.1186/1687-1812-2013-336},
}

\bib{MR2927571}{article}{
   author={Kimura, Yasunori},
   author={Sat{\^o}, Kenzi},
   title={Convergence of subsets of a complete geodesic space with curvature
   bounded above},
   journal={Nonlinear Anal.},
   volume={75},
   date={2012},
%   number={13},
   pages={5079--5085},
%   issn={0362-546X},
%   review={\MR{2927571}},
%   doi={10.1016/j.na.2012.04.024},
}

\bib{MR3020188}{article}{
   author={Kimura, Yasunori},
   author={Sat{\^o}, Kenzi},
   title={Halpern iteration for strongly quasinonexpansive mappings on a
   geodesic space with curvature bounded above by one},
   journal={Fixed Point Theory Appl.},
   date={2013},
   pages={2013:7, 1--14},
%   pages={2013:7, 14},
%   review={\MR{3020188}},
%   doi={10.1186/1687-1812-2013-7},
}

\bib{MR2416076}{article}{
   author={Kirk, W. A.},
   author={Panyanak, B.},
   title={A concept of convergence in geodesic spaces},
   journal={Nonlinear Anal.},
   volume={68},
   date={2008},
%   number={12},
   pages={3689--3696},
%   issn={0362-546X},
%   review={\MR{2416076 (2009m:54061)}},
%   doi={10.1016/j.na.2007.04.011},
}

\bib{MR423139}{article}{
   author={Lim, Teck Cheong},
   title={Remarks on some fixed point theorems},
   journal={Proc. Amer. Math. Soc.},
   volume={60},
   date={1976},
%   number={},
   pages={179--182},
%   issn={},
%   review={},
}

\bib{MR0298899}{article}{
   author={Martinet, B.},
   title={R\'egularisation d'in\'equations variationnelles par
   approximations successives},
%   language={French},
   journal={Rev. Fran\c caise Informat. Recherche Op\'erationnelle},
   volume={4},
   date={1970},
%   number={Ser. R-3},
   pages={154--158},
%   review={\MR{0298899 (45 \#7948)}},
}

\bib{MR1651416}{article}{
   author={Mayer, Uwe F.},
   title={Gradient flows on nonpositively curved metric spaces and harmonic
   maps},
   journal={Comm. Anal. Geom.},
   volume={6},
   date={1998},
%   number={2},
   pages={199--253},
%   issn={1019-8385},
%   review={\MR{1651416 (99m:58067)}},
}

\bib{MR0531600}{article}{
   author={Nevanlinna, Olavi},
   author={Reich, Simeon},
   title={Strong convergence of contraction semigroups and of iterative methods for accretive operators in Banach spaces},
%   language={},
   journal={Israel J. Math.},
   volume={32},
   date={1979},
%   number={1},
   pages={44--58},
%   issn={},
%   review={},
%   doi={},
}

\bib{MR3396425}{article}{
%   author={Ohta, Shin-ichi},
   author={Ohta, S.},
   author={P{\'a}lfia, Mikl{\'o}s},
   title={Discrete-time gradient flows and law of large numbers in
   Alexandrov spaces}, 
   journal={Calc. Var. Partial Differential Equations},
   volume={54},
   date={2015}, 
%   number={2}, 
   pages={1591--1610}, 
}

\bib{MR1072312}{article}{
   author={Reich, Simeon},
   author={Shafrir, Itai},
   title={Nonexpansive iterations in hyperbolic spaces},
%   language={},
   journal={Nonlinear Anal.},
   volume={15},
   date={1990},
%   number={6},
   pages={537--558},
%   issn={},
%   review={},
%   doi={},
}

\bib{MR0410483}{article}{
   author={Rockafellar, R. Tyrrell},
   title={Monotone operators and the proximal point algorithm},
   journal={SIAM J. Control Optim.},
%   journal={SIAM J. Control Optimization},
   volume={14},
   date={1976},
%   number={5},
   pages={877--898},
%   issn={0363-0129},
%   review={\MR{0410483 (53 \#14232)}},
}

\bib{MR2847453}{article}{
   author={Saejung, Satit},
   author={Yotkaew, Pongsakorn},
   title={Approximation of zeros of inverse strongly monotone operators in
   Banach spaces},
   journal={Nonlinear Anal.},
   volume={75},
   date={2012},
%   number={2},
   pages={742--750},
%   issn={0362-546X},
%   review={\MR{2847453 (2012h:47145)}},
%   doi={10.1016/j.na.2011.09.005},
}

\bib{MR1734665}{article}{
   author={Solodov, M. V.},
   author={Svaiter, B. F.},
   title={Forcing strong convergence of proximal point iterations in a Hilbert space},
   journal={Math. Program.},
   volume={87},
   date={2000},
%   number={1},
   pages={189--202},
%   issn={},
%   review={},
%   doi={},
}

\bib{MR2548424}{book}{
   author={Takahashi, Wataru},
   title={Introduction to nonlinear and convex analysis},
   publisher={Yokohama Publishers, Yokohama},
   date={2009},
%   pages={iv+234},
%   isbn={978-4-946552-35-9},
%   review={\MR{2548424 (2010j:49001)}},
}

\bib{MR1911872}{article}{
   author={Xu, Hong-Kun},
   title={Iterative algorithms for nonlinear operators},
   journal={J. London Math. Soc. (2)},
   volume={66},
   date={2002},
%   number={1},
   pages={240--256},
%   issn={0024-6107},
%   review={\MR{1911872}},
%   doi={10.1112/S0024610702003332},
}

\bib{MR3523548}{article}{
   author={Yokota, Takumi},
   title={Convex functions and barycenter on ${\rm CAT}(1)$-spaces of small radii},
   journal={J. Math. Soc. Japan},
   volume={68},
   date={2016},
%   number={3},
   pages={1297--1323},
%   issn={0025-5645},
%   review={\MR{3523548}},
%   doi={10.2969/jmsj/06831297},
}

 \end{biblist}
\end{bibdiv}

\end{document}